\documentclass[11pt,reqno,tbtags,a4paper]{amsart}
\usepackage{amssymb}
\usepackage{mathabx}  
\usepackage{xpunctuate}
\usepackage{url}
\usepackage{enumitem} 
\usepackage[square,numbers]{natbib}
\bibpunct[, ]{[}{]}{;}{n}{,}{,}
 
\usepackage{xcolor}

\title[Large fringe trees]
{Large fringe trees for random trees with given vertex degrees}

\date{7 April, 2026}  

\author{Gabriel Berzunza Ojeda}
\address{Department of Mathematical Sciences, University of Liverpool, United
  Kingdom} 
\email{gabriel.berzunza-ojeda@liverpool.ac.uk} 
\urladdr{https://www.liverpool.ac.uk/mathematical-sciences/staff/gabriel-berzunza-ojeda/}

\author{Cecilia Holmgren} 
\address{Department of Mathematics, Uppsala University, Sweden} 
\email{cecilia.holmgren@math.uu.se} 
\urladdr{https://katalog.uu.se/empinfo/?id=N5-824}

\author{Svante Janson}
\thanks{Funded by Knut and Alice Wallenberg Foundation; 
Ragnar Söderberg’s Foundation; Swedish Research Council}
\address{Department of Mathematics, Uppsala University, 
Sweden}
\email{svante.janson@math.uu.se}
\newcommand\urladdrx[1]{{\urladdr{\def~{{\tiny$\sim$}}#1}}}
\urladdrx{http://www2.math.uu.se/~svante/}

\subjclass[2020]{} 

\numberwithin{equation}{section}

\renewcommand\le{\leqslant}
\renewcommand\ge{\geqslant}

\allowdisplaybreaks

\theoremstyle{plain}
\newtheorem{theorem}{Theorem}[section]
\newtheorem{lemma}[theorem]{Lemma}

\newtheorem{corollary}[theorem]{Corollary}
\newtheorem{condition}[theorem]{Condition}

\theoremstyle{definition}

\newtheorem{exampleqqq}[theorem]{Example}
\newenvironment{example}{\begin{exampleqqq}}
  {\hfill\qedsymbol\end{exampleqqq}}

\newtheorem{remarkqqq}[theorem]{Remark}
\newenvironment{remark}{\begin{remarkqqq}}
  {\hfill$\triangle$\end{remarkqqq}}

\theoremstyle{remark}

\newenvironment{romenumerate}[1][-10pt]{
\addtolength{\leftmargini}{#1}\begin{enumerate}
 }{\end{enumerate}}

\newcounter{thmenumerate}

\newcommand\pfitemx[1]{\par#1:}
\newcommand\pfitemref[1]{\pfitemx{\ref{#1}}}

\newcommand{\refT}[1]{Theorem~\ref{#1}}
\newcommand{\refTs}[1]{Theorems~\ref{#1}}
\newcommand{\refC}[1]{Corollary~\ref{#1}}

\newcommand{\refL}[1]{Lemma~\ref{#1}}

\newcommand{\refR}[1]{Remark~\ref{#1}}

\newcommand{\refS}[1]{Section~\ref{#1}}
\newcommand{\refSs}[1]{Sections~\ref{#1}}
\newcommand{\refSS}[1]{Section~\ref{#1}}
\newcommand{\refSSs}[1]{Sections~\ref{#1}}

\newcommand{\refE}[1]{Example~\ref{#1}}

\newcommand{\refApp}[1]{Appendix~\ref{#1}}

\newcommand{\refCond}[1]{Condition~\ref{#1}}
\newcommand{\refConds}[1]{Conditions~\ref{#1}}

\begingroup
  \divide\count255 by 60
  \multiply\count255 by -60
  \advance\count255 by \time
  \ifnum \count255 < 10 \xdef\klockan{\the\count1.0\the\count255}
  \else\xdef\klockan{\the\count1.\the\count255}\fi
\endgroup

\newcommand{\sumin}{\sum_{i=1}^n}

\newcommand{\sumkn}{\sum_{k=1}^n}
\newcommand{\sumln}{\sum_{\ell=1}^n}
\newcommand{\prodin}{\prod_{i=1}^n}

\newcommand\set[1]{\ensuremath{\{#1\}}}
\newcommand\bigset[1]{\ensuremath{\bigl\{#1\bigr\}}}
\newcommand\Bigset[1]{\ensuremath{\Bigl\{#1\Bigr\}}}

\newcommand\numset[1]{\#\set{#1}}
\newcommand\xpar[1]{(#1)}
\newcommand\bigpar[1]{\bigl(#1\bigr)}
\newcommand\Bigpar[1]{\Bigl(#1\Bigr)}

\newcommand\lrpar[1]{\left(#1\right)}
\newcommand\bigsqpar[1]{\bigl[#1\bigr]}
\newcommand\sqpar[1]{[#1]}

\newcommand\lrsqpar[1]{\left[#1\right]}
\newcommand\cpar[1]{\{#1\}}
\newcommand\bigcpar[1]{\bigl\{#1\bigr\}}

\newcommand\lrcpar[1]{\left\{#1\right\}}

\newcommand\bigabs[1]{\bigl\lvert#1\bigr\rvert}
\newcommand\Bigabs[1]{\Bigl\lvert#1\Bigr\rvert}

\def\rompar(#1){\textup(#1\textup)}    

\newcommand\parfrac[2]{\lrpar{\frac{#1}{#2}}}

\def\xexp(#1){e^{#1}}
\newcommand\ceil[1]{\lceil#1\rceil}

\newcommand\floor[1]{\lfloor#1\rfloor}

\newcommand\setn{\set{1,\dots,n}}

\newcommand\ktoo{\ensuremath{{k\to\infty}}}
\newcommand\kktoo{\ensuremath{{\kk\to\infty}}}

\newcommand\punkt{\xperiod}    
\newcommand\iid{i.i.d\punkt}    

\newcommand\eg{e.g\punkt}

\newcommand\ii{\mathrm{i}}

\newcommand{\tend}{\longrightarrow}
\newcommand\dto{\overset{\mathrm{d}}{\tend}}

\newcommand\eqd{\overset{\mathrm{d}}{=}}

\newcommand\bbR{\mathbb R}

\newcommand\bbN{\mathbb{N}}
\newcommand\bbNo{\mathbb{N}_0}
\newcommand\bbT{\mathbb T}

\newcommand\bbZ{\mathbb Z}

\newcommand\bbB{\mathbb{B}}
\newcommand\bbD{\mathbb{D}}

\newcounter{CC}
\newcommand{\CC}{\stepcounter{CC}\CCx} 
\newcommand{\CCx}{C_{\arabic{CC}}}     
\newcommand{\CCreset}{\setcounter{CC}0} 
\newcounter{cc}
\newcommand{\cc}{\stepcounter{cc}\ccx} 
\newcommand{\ccx}{c_{\arabic{cc}}}     
\newcommand{\ccdef}[1]{\xdef#1{\ccx}}     
\newcommand{\ccname}[1]{\cc\ccdef{#1}}    
\newcommand{\ccreset}{\setcounter{cc}0} 

\renewcommand\Re{\operatorname{Re}}

\newcommand\E{\operatorname{\mathbb E{}}}
\renewcommand\P{\operatorname{\mathbb P{}}}

\newcommand\Var{\operatorname{Var}}

\newcommand\N{\mathrm{N}}
\newcommand\Po{\operatorname{Po}}
\newcommand\Bi{\operatorname{Bi}}

\newcommand\Be{\operatorname{Be}}

\newcommand\supp{\operatorname{supp}}

\newcommand\ga{\alpha}

\newcommand\gd{\delta}
\newcommand\gD{\Delta}
\newcommand\gf{\varphi}

\newcommand\kk{\kappa}
\newcommand\gl{\lambda}

\newcommand\gs{\sigma}

\newcommand\gss{\sigma^2}

\newcommand\gU{\Upsilon}
\newcommand\eps{\varepsilon}
\renewcommand\phi{\xxx}  

\newcommand\cA{\mathcal A}

\newcommand\cD{\mathcal D}

\newcommand\cI{\mathcal I}

\newcommand\cT{{\mathcal T}}

\newcommand\indic[1]{\boldsymbol1\cpar{#1}} 
\newcommand\lrindic[1]{\boldsymbol1\lrcpar{#1}} 
\newcommand\bigindic[1]{\boldsymbol1\bigcpar{#1}}

\newcommand\indicz[1]{\boldsymbol1_{\cpar{#1}}}

\newcommand\qw{^{-1}}

\newcommand\qq{^{1/2}}
\newcommand\qqw{^{-1/2}}

\newcommand\intoooo{\int_{-\infty}^\infty}
\newcommand\intpipi{\int_{-\pi}^{\pi}}

\newcommand\dtv{d_{\mathrm{TV}}}

\newcommand\dd{\,\mathrm{d}}

\newcommand{\chf}{characteristic function}

\newcommand{\ui}{uniformly integrable}

\newcommand\rhs{right-hand side}

\newcommand\GW{Galton--Watson}
\newcommand\GWt{\GW{} tree}
\newcommand\cGWt{conditioned \GW{} tree}

\newcommand\bd{\mathbf{d}}
\newcommand\hd{\widehat{d}}
\newcommand\bhd{\widehat{\bd}}
\newcommand\xk{^{(k)}}
\newcommand\bhdk{\bhd\xk}
\newcommand\hdk{\hd\xk}
\newcommand\bp{\mathbf{p}}
\newcommand\bn{{\mathbf{n}}}
\newcommand\bm{{\mathbf{m}}}
\newcommand\bmx{\mathbf{n}}
\newcommand\bnk{{\mathbf{n}_\kk}}

\newcommand\px{p}
\newcommand\gfsm{\gf_{S_m}}

\newcommand\coloneqq{:=}
\newcommand\sT{\mathsf{T}}
\newcommand\bxd{\bar d}
\newcommand\Smk{S_{m_\kk}}
\newcommand\muk{{\mu_\kk}}
\newcommand\gsk{\gs_\kk}
\newcommand\gssk{\gsk^2}
\newcommand\gsp{\gs_\bp}
\newcommand\gssp{\gsp^2}
\newcommand\hgsk{\widehat{\gs}_\kk}
\newcommand\hgssk{\hgsk^2}
\newcommand\Yk{{Y_\kk}}
\newcommand\Do{\cD_0}
\newcommand\Dp{D_{\bp}}
\newcommand\cTbn{\cT_{\bn}}
\newcommand\cTbnk{\cT_{\bnk}}
\newcommand\Dk{D_\kk}
\newcommand\xbnk{|\bnk|}
\newcommand\mk{{m_{\kk}}}

\newcommand\hgl{\widehat{\gl}}
\newcommand\bmk{{\bm_\kk}}
\newcommand\ENTT{\E[N_{T}(\cTbn)]}
\newcommand\ENTTk{\E[N_{T_\kk}(\cTbnk)]}
\newcommand\simx{\in}

\begin{document}

\begin{abstract} 

This paper extends the study of fringe trees in random plane trees with a given degree statistic. While previous work established the asymptotic normality of the count of fringe trees isomorphic to a fixed tree, we investigate the case where the target tree grows with the size of the random tree.

We consider three primary subtree counts: the number of fringe trees
isomorphic to a specific growing tree, the number of fringe trees sharing a
given growing degree statistic, and the number of fringe trees of a specific
growing size. To establish our results, we employ and compare four distinct
probabilistic frameworks: the method of moments with the Gao--Wormald theorem, Stein's method with coupling (to provide explicit error bounds in total variation distance), the Cai--Devroye method, and Stein's method with exchangeable pairs. Our findings provide conditions for Poisson and normal convergence for these subtree counts. 

Additionally, we provide a local limit theorem for sums of values obtained
via sampling without replacement that may be of independent
interest. Finally, our results and methods are also applied to conditioned
critical Galton–Watson trees. 
\end{abstract}

\maketitle

\section{Introduction}\label{S:intro}

In \cite{SJ378}, we investigated the fringe trees of random plane trees
$\cT_\bn$ with a given degree statistic $\bn$ (see \refS{SBackground} for
notation). Specifically, for a sequence of degree statistics
$(\bnk)_{\kk\ge1}$ with size $|\bnk|\to\infty$, we established, under
suitable conditions, the asymptotic normality of the number of fringe trees
in $\cT_\bnk$ equal (i.e.\ isomorphic) to a fixed tree $T$. In the present
paper, we extend this analysis to the number 
$N_{T_{\kk}}(\cT_\bnk)$
of fringe trees of
$\cT_\bnk$
that are isomorphic to some
given tree $T_{\kk}$ that grows with $\kk$. More generally, we  also
consider the number of fringe trees 
that belong to the set of trees with given degree statistic
$\bm_\kk$,
which we denote by $N_{\bm_\kk}(\cT_\bnk)$,
 and the number of fringe trees of a given size
$m_\kk$, denoted by $N_{m_\kk}(\cT_\bnk)$. 

Our main results are structured as follows. 
In Section \ref{Stype}, we study $N_{T_{\kk}}(\cT_\bnk)$.
We do this using four different methods, in separate subsections,
but the main results are obtained in the first two subsections.
We use several different methods both because the results to some extent
complement each other, and because we find it interesting to show how
different methods can be applied to this problem, and to compare the results.
The first subsection derives results by explicitly
computing the factorial moments of $N_{T_{\kk}}(\cT_\bnk)$ and analysing
their asymptotic behaviour. In particular, we establish asymptotic normality
by applying the Gao–-Wormald theorem \cite[Theorem 1]{GW2004} (see also
\cite[Theorem A.1]{SJ378}). 
This leads to
Theorems \ref{The01} and \ref{The1} which provide conditions for
$N_{T_{\kk}}(\cT_\bnk)$ to be asymptotically Poisson or normal. 
In \refSS{SSstein2} we use instead Stein's method with coupling \cite{SJI}
(with some calculations of first and second moments here too).
This yields a very similar result with Poisson or normal convergence
in \refC{Cstein}. This overlaps to a large extent with the result in
\refT{The1}, but the results in the two sections also complement each other;
\refT{The1} includes results on moment convergence, while Stein's method 
yields \refT{Tstein} with an explicit estimate of the error (in total
variation distance) for the approximations.
In \refSS{SSCai}, we apply a different, and simpler, 
method by Cai and Devroye \cite{Cai2017}. 
We show  that this method also gives a bound for the error in total
variation distance, which in most cases is sufficient to show convergence in
distribution to a Poisson or normal distribution; however, this bound is
weaker than the bound obtained in \refSS{SSstein2} by Stein's method.
Finally, in \refSS{SSstein1}, we discuss briefly another version of Stein's
method, with exchangeable pairs. It turns out that for the present problem, 
this gives us the same bound as the simpler method in \refSS{SSCai};
we therefore do not treat this method in detail.

In \refS{Sstatistic}, we study $N_\bmk(\cTbnk)$, the number of fringe trees
with a given degree statistics but otherwise arbitrary shape.
\refT{ThPo2} gives results on Poisson and normal convergence, corresponding
to \refT{The1}. We prove this using moment calculations as in \refSS{SSGao}.
It seems straightforward to use Stein's method as in \refSS{SSstein2}
to obtain bounds for the approximation error in total variation here too,
but we leave that to the readers.
(The methods of \refSSs{SSCai} and \ref{SSstein1} are not applicable here,
since they are based on the existence of different fringe trees with the
same degree statistics, of which some are counted and some not.)

Section \ref{Ssize} then focuses on the asymptotic behaviour of the size-specific count $N_{m_\kk}(\cT_\bnk)$. Specifically, Theorem \ref{TS} establishes that $N_{m_\kk}(\cT_\bnk)$ converges to a Poisson random variable whenever $m_\kk\sim a\xbnk^{2/3}$ as \kktoo, for some constant $a>0$. The proof uses a local limit theorem for sums of values obtained via sampling without replacement, Theorem \ref{TLLT}, which may be of independent interest.

Finally, in Section \ref{AppGW}, we apply our results to conditioned
critical Galton--Watson trees. In particular, 
Theorem \ref{TheoGW2} provides an alternative proof of results in
\cite[Theorem 1.4 (ii)]{Cai2017},
while
Theorem \ref{TheoGW1} extends
\cite[Theorem 1.3]{Cai2017} to offspring distributions with infinite
variance.

In \refApp{ALLT}, we prove a local limit theorem for the sum of a number of values
obtained by drawing without replacement that we need in \refS{Ssize}; this is presumably known, but we
have not found a reference and include a proof for completeness.

\section{Some background and notation} \label{SBackground}

\subsection{Some general notation}\label{SSnot}

Let $\bbNo:=\set{0,1,\dots}$ and $\bbN:=\set{1,\dots}$. For $n \in \bbN$, let $[n] \coloneqq \{1, \dots, n\}$. 

For $x \in \mathbb{R}$ and $q \in \mathbb{N}_{0}$, let $(x)_{q} \coloneqq
x(x-1)\cdots(x-q+1)$ denote the $q$th falling factorial of $x$. 
(Here $(x)_{0} \coloneqq 1$. 
Note that $(x)_{q} = 0$ whenever $x \in \mathbb{N}_{0}$ and $x-q+1\leq0$.)

We let
$C$ and $c$ denote unspecified positive constants that may vary from one occurrence
to the next; we may add subscripts to distinguish some of them.

We use $\dto$ for convergence in distribution, 
for a sequence of random variables in some metric space. 
Also, $\stackrel{\rm d}{=}$ means equal in distribution. 
We write ${\rm N}(0,1)$ for a standard normal distribution
with mean $0$ and variance $1$. For $\lambda \in [0,\infty)$, we write 
$\Po\bigpar{\gl}$ for a Poisson distribution of parameter $\lambda$. 

Let $\bp=(p_i)_{i \in \mathbb{Z}}$ be a probability distribution on $\bbZ$. 
We define its  \emph{support} $\supp(\bp):=\set{i \in \bbZ:p_i>0}$,
and let the
\emph{span} of $\bp$ be the largest integer $h$ such that 
$i-j$ is a multiple of $h$ for all $i,j\in\supp(\bp)$;
if $\bp$ is concentrated at a single point, we define the span to be $\infty$.
In symbols, the span is
$\gcd\set{i-j:i,j\in\supp(\bp)}$.
We will usually consider $\bp$ with $p_0>0$, and then the span equals
$\gcd(\supp(\bp))$.
We say that $\bp$ is \emph{nonlattice} if its span equals 1, and
\emph{lattice} otherwise.

The total variation distance between two random variables $X$ and $Y$
in a metric space (or rather between their distributions) 
is defined by
\begin{align}
  \dtv(X,Y):=\sup_A\bigabs{\P(X\in A)-\P(Y\in A) },
\end{align}
taking the supremum over all measurable subsets $A$.

Unspecified limits are as \kktoo.

\subsection{Trees with given vertex degrees}\label{SStrees}

All trees in this paper are rooted and plane 
(also called ordered rooted trees); see e.g.\ \cite{Drmota2009}. 
Let $\mathbb{T}$ be the set of all (finite) plane rooted trees.
Denote the size, i.e.\ the
number of vertices, of a tree $T$ by $|T|$. 
Let $\bbT_n:=\set{T\in\bbT:|T|=n}$, for $n\ge1$.

The (out)degree of a vertex $v
\in T$, denoted $d_{T}(v)$, is its number of children in $T$.
The \emph{degree statistic} of a rooted tree $T$ is the sequence $\mathbf{n}_{T} =
(n_{T}(i))_{i \geq 0}$, where $n_{T}(i) := \numset{v \in T: d_{T}(v) = i}$ is
the number of vertices of $T$ with $i$ children.
We have
\begin{align} \label{eq30}
|T| = \sum_{i \geq 0} n_{T}(i) = 1 + \sum_{i \geq 0} in_{T}(i).
\end{align}
Moreover, 
a sequence $\mathbf{n} = (n(i))_{i \geq 0}$ of nonnegative integers is the degree
statistic of some tree if and only if 
\begin{align}\label{ds}
\sum_{i \geq 0} n(i) = 1 + \sum_{i \geq 0} in(i).   
\end{align}
We say that $\bn$ is a \emph{degree statistic}
if \eqref{ds} holds.
In this case, we let 
$|\mathbf{n}| \coloneqq \sum_{i \geq 0} n(i)$, 
and we
write $\mathbb{T}_{\mathbf{n}}$ for
the set of plane rooted trees with degree statistic $\mathbf{n}$.
It is well known that 
(see  e.g.\ \cite[Exercise 6.2.1]{Pitman})
\begin{eqnarray}\label{sw1}
|\mathbb{T}_{\mathbf{n}}| = \frac{1}{|\mathbf{n}|} \binom{|\mathbf{n}|}{\mathbf{n}} = \frac{1}{|\mathbf{n}|} \frac{|\mathbf{n}|!}{\prod_{i \geq 0} n(i)!}.
\end{eqnarray}
\noindent
We let $\mathcal{T}_{\mathbf{n}}$ be a uniformly random element of the set
$\mathbb{T}_{\mathbf{n}}$, and  we denote this by  
$\mathcal{T}_{\mathbf{n}} \simx {\rm Unif}(\mathbb{T}_{\mathbf{n}})$.

\subsection{Degree statistics and empirical degree distributions}
\label{SSdegree}

For a degree statistic $\mathbf{n}$, denote by $\mathbf{p}(\mathbf{n}) = (p_{i}(\mathbf{n}))_{i \geq 0}$ its \emph{empirical degree distribution}, i.e., 
\begin{align}\label{pin}
p_{i}(\mathbf{n}) := \frac{n(i)}{|\mathbf{n}|}, \hspace*{4mm} \text{for} \hspace*{2mm} i \geq 0. 
\end{align}
Note that this is the distribution of the degree of a uniformly
random vertex in any tree $T$ with degree statistic $\bn$.

In this paper (as in \cite{SJ378}), we assume  the following condition.
\begin{condition} \label{Condition1}
$\mathbf{n}_{\kappa} = (n_{\kappa}(i))_{i \geq 0}$, $\kk\ge1$, 
are degree
statistics such that 
as $\kappa \rightarrow \infty$:
\begin{enumerate}[label=\upshape(\roman*)]
\item\label{Cond1a} 
$|\mathbf{n}_{\kappa}| \rightarrow \infty$, 
\item \label{Cond1b}
For every $i \geq 0$, we have
$p_{i}(\mathbf{n}_{\kappa}) \rightarrow p_{i}$, 
where $\mathbf{p} := (p_{i})_{i \geq 0}$ is a 
probability distribution on $\bbNo$.
\end{enumerate} 
\end{condition}

\begin{remark}\label{RD}
Let $D_\kk$ denote a random variable with the distribution $\bp(\bnk)$;
by \eqref{pin}, $D_\kk$ can be regarded as the degree of a randomly chosen
vertex of any tree 
with degree statistic $\bnk$.
Similarly, let $\Dp$ be a random variable with distribution $\bp$.
Then \refCond{Condition1}\ref{Cond1b} says that $D_\kk\dto\Dp$, as $\kk \to \infty$.

Furthermore,
\eqref{ds} implies
\begin{align}\label{the}
\E D_\kk
=\frac{\xbnk-1}{\xbnk}
=1-\frac{1}{\xbnk},
\end{align}
and
it follows from \eqref{the} and Fatou's lemma that \refCond{Condition1} implies 
\begin{align}\label{klas}
  \sum_{i\ge0}ip_i=\E D_\bp\le1.
\end{align}
We have equality in \eqref{klas} if and only if the variables $D_\kk$ are
uniformly integrable
(see e.g.\ \cite[Theorem 5.5.9]{Gut}).
\end{remark}

We note also that,
similarly to the asymptotic result \eqref{klas},
for any degree statistic $\bn$, we have by \eqref{ds}
\begin{align}\label{ari1}
  \sum_{i\ge0}ip_i(\bn) = 1-\frac{1}{|\bn|} < 1.
\end{align}
In particular, it follows that
\begin{align}\label{ari2}
  \sum_{i\ge2}p_i(\bn)  \le \tfrac12  \sum_{i\ge2}ip_i(\bn) <\tfrac12,
\end{align}
and thus 
\begin{align}\label{ari3}
p_0(\bn)+p_1(\bn)>\tfrac12.   
\end{align}

In \refS{Ssize}, but \emph{not} in \refSs{Stype}--\ref{Sstatistic},
we will also need the following condition.

\begin{condition}\label{Cond2}
  In addition to \refCond{Condition1},
we have $\sum_{i\ge0} i^2 p_i(\bnk) \to \sum_{i\ge0} i^2p_i<\infty$, as $\kk \to \infty$.
\end{condition}

\begin{remark}\label{RD2}
Assume \refCond{Condition1} and
let $D_\kk$ and $\Dp$ be as in \refR{RD}.
Then \refCond{Cond2} says that $\E D_\kk^2\to\E\Dp^2<\infty$ as $\kk \to
\infty$, 
which is equivalent to
the sequence $D_\kk^2$ being uniformly integrable.
In particular, this implies $\E D_\kk\to\E\Dp$ as $\kk \to \infty$, and thus
$\E\Dp=1$ by  
\eqref{the}.
Similarly, it is easy to see that \refCond{Cond2} 
then is equivalent to
\begin{align}\label{cond2b}
\gssk:=  \Var D_\kk \to \gssp:=\Var \Dp = \sum_{i\ge0}(i-1)^2p_i, \quad \text{as} \quad  \kk \to \infty.
\end{align}
\refCond{Cond2} is thus a natural analogue of 
assuming finite offspring variance for
a conditioned Galton--Watson tree.
\end{remark}

\subsection{Fringe trees}\label{SSfringe}

For $T \in \mathbb{T}$ and a vertex $v \in T$, let $T_{v}$ be the subtree of
$T$ rooted at $v$, i.e., the subtree consisting of $v$ and all its
descendants. We call $T_{v}$ a fringe (sub)tree of $T$.
We regard $T_v$ as an element of $\bbT$ and
let, for $T, T^{\prime} \in \mathbb{T}$, 
\begin{align} \label{eq2}
N_{T^{\prime}}(T) \coloneqq  \numset{v \in T: T_{v} = T^{\prime}} 
= \sum_{v \in T} \indicz{T_{v} = T^{\prime}},
\end{align}
i.e., the number of fringe subtrees of $T$ that are equal 
to $T^{\prime}$. 
(Recall that $T_v=T'$ in $\bbT$ means that $T_v$ is isomorphic to
$T'$ as a plane rooted tree.) For a degree statistic $\bn$,  we let
\begin{align} \label{eq2s}
N_{\bn}(T) \coloneqq  \numset{v \in T: T_{v} \in \bbT_\bn} 
=\sum_{T'\in\bbT_{\bn}}N_{T'}(T)
\end{align}
be the number of fringe trees of $T$ with degree statistic $\bn$. Furthermore, we let, for an integer $m\ge1$,
\begin{align} \label{eq2m}
N_{m}(T) \coloneqq  \numset{v \in T: |T_{v}| = m} 
= \sum_{v \in T} \indicz{|T_{v}| =m}
=\sum_{T' \in \mathbb{T}_{m}}N_{T'}(T),
\end{align}
the number of fringe trees of $T$ that have size $m$.

\subsection{Trees and degree sequences} \label{TreesandDegrees}
We recall some well known facts about representations of plane trees by
degree sequences, see \eg{}
\cite[\S6.1--6.2, in particular Lemma 6.3]{Pitman} and
\cite[\S15]{SJ264}.

Given a plane tree $T\in\bbT_n$, we define its 
(depth-first) \emph{degree sequence} as 
$\bd_T=\bigpar{d_T(v_1),\dots,d_T(v_n)}$,
taking the vertices of $T$ in depth-first order.
The map $\gU:T\mapsto \bd_T$ is a bijection $\bbT_n\leftrightarrow\bbD_n$,
where
\begin{align}\label{Dn}
  \bbD_n:=\Bigset{(d_1,\dots,d_n)\in\bbNo^n:
\sum_{i=1}^k d_i \ge k \text{ for } 1\le k<n, \text{ and }
\sum_{i=1}^n d_i = n-1
}.
\end{align}
Furthermore, let 
\begin{align}\label{Bn}
  \bbB_n:=\Bigset{(d_1,\dots,d_n)\in\bbNo^n:
\sum_{i=1}^n d_i = n-1}.
\end{align}
Then $\bbD_n\subseteq\bbB_n$, and given any $\bd=(d_1,\dots,d_n)\in\bbB_n$,
  exactly one of the cyclic shifts of $\bd$ belongs to $\bbD_n$.
This defines an $n$-to-1 map $\Phi:\bbB_n\to\bbD_n$, and thus an $n$-to-1 map
$\Psi:=\gU\qw\Phi:\bbB_n\to\bbT_n$.
For convenience, we regard sequences in $\bbB_n$ as cyclic, and we define
$d_i$ for all $i\in\bbZ$ by requiring that $d_{i}=d_{i-n}$ for all $i\in\bbZ$.

\begin{remark}
  These results are often stated in terms of the partial sums
  $\sum_{i=1}^k(d_i-1)$ 
and the walks (excursions, bridges) that they form.
This is often convenient or important, but for our purposes it is simpler to
use the degrees $d_i$ directly.
\end{remark}

We define also, for a given degree statistic $\bn=(n(i))_{i\ge0}$, 
\begin{align}\label{Bbn}
\mathbb{B}_{\mathbf{n}} := 
\bigset{(d_1,\dots,d_{|\bn|})\in\bbNo^{|\bn|}
: \numset{j : d_j = i} = n(i), \text{ for every }i \geq 0 },
\end{align}
and
\begin{align} \label{eq2g}
\bbD_\bmx \coloneqq \gU(\bbT_\bmx)
=\bbD_{|\bmx|}\cap\bbB_\bmx,
\end{align}
the set of all (depth-first) degree sequences of  trees in $\bbT_\bmx$.
Note that \eqref{ds} implies $\bbB_\bn\subseteq\bbB_{|\bn|}$.
Furthermore, $\bbB_\bn=\Psi\qw(\bbT_\bn)$.
Consequently, for a given degree statistic $\bn$, 
we may construct a uniformly random tree $\cT_\bn\in\bbT_\bn$ by
\begin{align}\label{ctd}
  \cT_\bn:=\Psi(\bd),
\end{align}
for a uniformly random sequence $\bd\in\bbB_\bn$.
Note that we may here in turn construct 
$\bd$ by
\begin{align}\label{ctd2}
\bd:=(d'_{\tau(1)},\dots,d'_{\tau(|\bn|)})  
\end{align}
for any fixed sequence $(d_1',\dots,d_{|\bn|}')\in\bbB_\bn$ (for example, $n(0)$
0's followed by $n(1)$ 1's, and so on), and a uniformly random permutation
$\tau$ of $\{1, \dots, |\bn|\}$.
We assume these constructions in the sequel.

Moreover, see \eg{} \cite[\S17]{SJ264}, if $T$ has degree sequence
$d_{T}(v_1),\dots,d_{T}(v_{|T|})$, then the fringe tree at $v_j$ has degree sequence
$d_{T}(v_j),\dots,d_{T}(v_k)$ for the unique $k\ge j$ such that this is a degree
sequence, i.e., belongs to $\bbD_{k-j+1}$.
(We use here that we define our degree sequences to be in
depth-first order.)
Applying this to $\cT_\bn$,
it follows that for any given plane tree $T$,
if we denote the degree sequences of the trees by 
$\bd_{\cTbn}=(d_{\cTbn,1},\dots,d_{\cTbn,|\bn|})$ and
$\bd_T=(d_{T,1},\dots,d_{T,|T|})$, 
then
\begin{align}\label{ari4}
  N_T(\cT_\bn) = 
\sum_{k=1}^{|\bn|-|T|+1}\bigindic{(d_{\cTbn,k+i-1})_{i=1}^{|T|}=\bd_{T}}.
\end{align}
We regard the degree sequence $\bd_{\cTbn}$ as cyclic, with indices
interpreted modulo $|\bn|$. Then the sum in \eqref{ari4} remains the same if we
extend  the sum to $\sum_{k=1}^{|\bn|}$, since the indicators for $k>|\bn|-|T|+1$
vanish. 
(A sequence $(d_{\cTbn,k+i-1})_{i=1}^{|T|}$ that includes the index $|\bn|$ 
and then continues, i.e.\ wraps around,
can never be the degree sequence of a tree.)
Furthermore, the latter sum is invariant under cyclic shifts of 
the degree sequence $\bd_{\cTbn}$. 
Hence, constructing $\cT_\bn$ as in \eqref{ctd} for a
uniformly random sequence  $\bd=(d_1,\dots,d_{|\bn|})$ in $\bbB_\bn$,
we have
\begin{align}\label{b1}
  N_T(\cT_\bn)=\sum_{j=1}^{|\bn|}\bigindic{(d_{j+i-1})_{i=1}^{|T|}=\bd_{T}},
\end{align}
where 
we recall that $d_\ell$ is interpreted cyclically.

\subsection{\GWt{s}}

For a probability distribution $\mathbf{p} = (p_{i})_{i \geq  0}$ on $\mathbb{N}_{0}$, let $\mathcal{T}_{\mathbf{p}}$ be a
Galton--Watson tree with offspring distribution $\bp$, and define
$\pi_{\mathbf{p}}$ as the distribution of  $\mathcal{T}_{\mathbf{p}}$, i.e.,
(with $0^0:=1$ as usual)
\begin{align}\label{pip}
\pi_{\mathbf{p}}(T) 
\coloneqq 
\P\xpar{\mathcal{T}_{\mathbf{p}}=T}
=\prod_{i \geq 0} p_{i}^{n_{T}(i)}
=\prod_{i \in\cD(T)} p_{i}^{n_{T}(i)}
, \qquad \text{for} \, \, \, T \in \mathbb{T},
\end{align}
where 
\begin{align}\label{cD}
\cD(T):=\set{i \geq 0:n_T(i)>0}=\set{d_T(v):v\in T},   
\end{align}
the set of degrees that appear in $T$. 
 We note the simple estimate
\begin{align}  \label{ny2}
0\le \pi_{\mathbf{p}}(T) 
\le p_i\bigpar{\max_{j \in\cD(T)} p_{j}}^{|T|-1}
\le \bigpar{\max_{j \in\cD(T)} p_{j}}^{|T|},
\qquad i\in \cD(T)
.\end{align}

\section{Fringe trees of a given type (depending on $\kk$)}
\label{Stype}

We will in this section study the number of fringe trees of $\cT_{\bn_\kk}$
equal (isomorphic) to some given tree, for a given 
sequence of degree statistics $(\bn_\kk)_{\kk\ge1}$.
We studied in \cite{SJ378} the case of a fixed tree $T$;
here we consider the general case of a tree $T_\kk$ that depends on the
degree statistic $\bn_\kk$, with focus on the case $|T_\kk|\to\infty$.
Recalling the notation in \refS{S:intro}, we thus study $N_{T_\kk}(\cT_{\bn_\kk})$
for given sequences $T_\kk$ and $\bn_\kk$, $\kk\ge1$.

\subsection{Moments and the method of Gao and Wormald}
\label{SSGao}

We begin with a formula (from \cite{SJ378})
and some estimates for the expectation.
Recall the notation \eqref{pin} and \eqref{pip}--\eqref{cD}.
(Similar formulas for higher moments exist too,
see \cite{SJ378} and \eqref{eq4G} below.) 

\begin{lemma}[Partly \cite{SJ378}]\label{LE1}
  Let $\bn$ be a degree statistic and let $T\in\bbT$ with $|T|\le|\bn|$.
Then
\begin{align}\label{le1a}
  \E N_T(\cT_\bn) 
= \frac{|\bn|}{(|\bn|)_{|T|}} \prod_{i\ge0}(n(i))_{n_T(i)}
= \frac{|\bn|}{(|\bn|)_{|T|}} \prod_{i\in\cD(T)}(n(i))_{n_T(i)}
.\end{align}
Hence,
\begin{align}\label{le1b}
  \E N_T(\cT_\bn) 
\le \frac{|\bn|^{|T|+1}}{(|\bn|)_{|T|}} \prod_{i\in\cD(T)}p_i(\bn)^{n_T(i)}
=\frac{|\bn|^{|T|+1}}{(|\bn|)_{|T|}} \pi_{\bp(\bn)}(T)
,\end{align}
and for every $C>0$ there is a $C'\geq 0$ such that if\/ $|T|^2\le C|\bn|$, then
\begin{align}\label{le1c}
  \E N_T(\cT_\bn) 
\le |\bn|\pi_{\bp(\bn)}(T)\cdot\Bigpar{1+C'\frac{|T|^2}{|\bn|}}.
\end{align}
\end{lemma}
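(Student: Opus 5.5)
The plan is to derive \eqref{le1a} from the cyclic representation \eqref{b1}, and then obtain \eqref{le1b} and \eqref{le1c} by elementary estimates. Construct $\cT_\bn$ via \eqref{ctd} from a uniformly random $\bd\in\bbB_\bn$. By \eqref{b1}, linearity of expectation, and the exchangeability of $\bd$ (in particular its invariance under cyclic shifts), we get
\begin{align*}
\E N_T(\cT_\bn)=|\bn|\,\P\bigpar{(d_1,\dots,d_{|T|})=\bd_T}.
\end{align*}
Since $\bd$ is a uniformly random arrangement of the multiset with $n(i)$ copies of $i$ (see \eqref{ctd2}), the probability that the first $|T|$ entries equal the prescribed sequence $\bd_T$ is the probability of a fixed ordered draw of $|T|$ items without replacement. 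That probability is $\prod_i (n(i))_{n_T(i)}/(|\bn|)_{|T|}$. Indeed, the number of injective choices of positions realising $\bd_T$ is $\prod_i(n(i))_{n_T(i)}$, out of $(|\bn|)_{|T|}$ equally likely ordered choices. This gives \eqref{le1a}. The restriction of the product to $i\in\cD(T)$ is trivial, since $(x)_0=1$.

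For \eqref{le1b}, bound $(n(i))_{n_T(i)}\le n(i)^{n_T(i)}=|\bn|^{n_T(i)}p_i(\bn)^{n_T(i)}$. Since $\sum_i n_T(i)=|T|$ by \eqref{eq30}, the factors multiply to $|\bn|^{|T|}\pi_{\bp(\bn)}(T)$ by \eqref{pip}, which yields \eqref{le1b}.

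For \eqref{le1c}, it remains to show that $|\bn|^{|T|}/(|\bn|)_{|T|}\le 1+C'|T|^2/|\bn|$ when $|T|^2\le C|\bn|$. Write $m=|T|$ and $N=|\bn|$. Then
\begin{align*}
\frac{(N)_m}{N^m}=\prod_{j=0}^{m-1}\Bigpar{1-\frac jN}.
\end{align*}
This step needs the most care, because $j/N$ may approach $1$ only when $m$ is close to $N$; that can happen only when $N$ is bounded in terms of $C$.
\begin{itemize}
\item \emph{Case $m\le N/2$.} Here $1-x\ge e^{-2x}$ for $x\le 1/2$, so the product is at least $\exp(-m^2/N)$. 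Hence the ratio is at most $\exp(m^2/N)\le 1+e^{C}m^2/N$, using $e^y\le 1+ye^{C}$ for $0\le y\le C$.
\item \emph{Case $m>N/2$.} Then $N<4m^2/N\le 4C$, so $N$ is bounded. The ratio $N^m/(N)_m$ is at most $N^N$, hence bounded by a constant depending only on $C$. Also $m^2/N>N/4\ge 1/4$, so the ratio is at most $1+C'm^2/N$ for $C'$ chosen large enough depending on $C$.
\end{itemize}
Combining the two cases gives \eqref{le1c}.
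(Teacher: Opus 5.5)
Your proof is correct and essentially follows the paper: \eqref{le1b} is obtained exactly as there, and your derivation of \eqref{le1a} from \eqref{b1} (cyclic invariance plus counting ordered draws without replacement) is a valid self-contained replacement for the paper's citation of \cite[Lemma 3.1]{SJ378}. Your separate treatment of $|T|>|\bn|/2$ in \eqref{le1c} is slightly more careful than the paper, whose appeal to \eqref{eq5Gb} needs $|T|\le|\bn|/2$; the hypothesis $|T|^2\le C|\bn|$ guarantees this only when $|\bn|\ge 4C$.
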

\begin{proof}
The formula \eqref{le1a} is \cite[Lemma 3.1]{SJ378}.  

The estimate \eqref{le1b} follows from \eqref{le1a} and
$(n(i))_{n_T(i)}\le n(i)^{n_T(i)}$, recalling \eqref{pin}, \eqref{eq30}, and
\eqref{pip}. 

Finally, \eqref{le1c} follows from \eqref{le1b} and the
useful estimate (see for example \cite[Lemma 4.1]{SJ378}), 
\begin{align} \label{eq5Gb}
(x)_{k} = x^{k} \exp \left( O\left(\frac{k^2}{x} \right) \right),
\end{align}
for $x \geq 1$ a real number and $0 \leq k \leq x/2$ an integer.
\end{proof}

\refL{LE1} shows loosely that $|\bn|\pi_{\bp(\bn)}$ can be regarded as an
approximation of $\E N_T(\cT_\bn)$. This is seen more formally in the
theorems below, and it also motivates our use of 
$|\bn|\pi_{\bp(\bn)}$ in the statements.

We state in \refT{The01} below a very general theorem, which essentially
(ignoring some technicalities) shows  that 
$N_{T_\kk}(\cT_{\bn_\kk})$ is asymptotically Poisson distributed when its
mean converges to a finite value, and asymptotically normal when its mean
tends to infinity.

We will prove \refT{The01} as a corollary of the even more general
\refT{The1}, which has weaker but more technical conditions.
We will actually give two proofs of the convergence in distribution in
\refT{The1} (and thus \refT{The01}), 
one below using the method of moments and the Gao--Wormald theorem, and the
other in \refS{SSstein2} using Stein's method. We find it interesting that
these quite different methods (at least in our versions)
both seem to require almost the same conditions.
The two methods also give different extra results as a bonus: the method
of moments shows moment convergence, and Stein's method yields an explicit
estimate of the total variation distance to a Poisson distribution.

\begin{theorem} \label{The01}
Assume \refCond{Condition1} and assume that the limit distribution $\bp$
is not concentrated at $0$ or at $1$, i.e., $p_0,p_1<1$.
For $\kk \geq 1$, let $T_{\kappa} \in \mathbb{T}$ be such that
$m_{\kappa} \coloneqq |T_{\kappa}| \rightarrow \infty$
as \kktoo.
Then, as $\kk \rightarrow \infty$:
\begin{romenumerate}[-20pt]  
\item \label{ReGD10} 
If\/ $|\bn_{\kk}| \pi_{\mathbf{p}(\bn_{\kk})}(T_{\kappa}) \rightarrow \lambda$, 
for some $\lambda \in [0,\infty)$, then 
\begin{align} \label{regd10}
N_{T_\kk}(\cTbnk) \dto\Po\bigpar{\gl}
\end{align} 
with convergence of all moments.

\item \label{ReGD20} 
If\/ $|\bn_{\kk}| \pi_{\mathbf{p}(\bn_{\kk})}(T_{\kappa}) \rightarrow \infty$, then 
\begin{align}\label{regd20}
\frac{N_{T_\kk}(\cTbnk)- |\bn_{\kk}| \pi_{\mathbf{p}(\bn_{\kk})}(T_{\kappa})}{\sqrt{|\bn_{\kk}| \pi_{\mathbf{p}(\bn_{\kk})}(T_{\kappa})}} \dto\N\bigpar{0,1},
\end{align}
\end{romenumerate}  
with convergence of mean and variance.

Thus, in both cases,
\begin{align}\label{regd30}
\E\bigsqpar{N_{T_\kk}(\cTbnk)}
\sim
\Var\bigsqpar{N_{T_\kk}(\cTbnk)}
\sim
|\bn_{\kk}| \pi_{\mathbf{p}(\bn_{\kk})}(T_{\kappa}).   
\end{align}
\end{theorem}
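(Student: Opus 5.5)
The plan is to use the method of moments through factorial moments, and to reduce the normal case to the Gao--Wormald theorem \cite[Theorem A.1]{SJ378}.

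Write $n:=|\bnk|$, $m:=m_\kk$, and $\mu_\kk:=n\,\pi_{\bp(\bnk)}(T_\kk)$.

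\emph{Step 1: preliminary reductions.} Since $p_0,p_1<1$, the bound \eqref{ny2} together with \eqref{ari3} should give
\begin{align}
\max_{j\in\cD(T_\kk)}p_j(\bnk)\le 1-c
\end{align}
for large $\kk$. To see this, note that $T_\kk$ with $m\ge2$ contains a leaf and a vertex of degree $\ge1$. Since $p_0(\bnk)\to p_0<1$ and $p_j(\bnk)\to p_j$ with $\sum_{j\ge1}p_j\le1$, each $p_j(\bnk)$ is bounded away from 1. Hence
\begin{align}
\pi_{\bp(\bnk)}(T_\kk)\le (1-c)^{m}.
\end{align}
In case \ref{ReGD10}, and also in case \ref{ReGD20} whenever $\mu_\kk$ grows at most polynomially, it follows that $m=O(\log n)$ or $\mu_\kk\to0$ along the relevant subsequences. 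In particular $m^2=o(n)$ whenever $\mu_\kk$ is not tiny. If $m\ge C\log n$ with $C$ large, then by \refL{LE1} (using \eqref{le1b} and $(n)_m\ge (n/2)^m$ when $m\le n/2$) we get $\E N\to0$, so that part is trivial. Large $m>n/2$ forces $N\le1$ and $\E N\le n\,\pi\,2^m\to0$, which is fine provided the constant $c$ beats the factor 2. That may fail, so I would handle it with the exact formula \eqref{le1a} instead.

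\emph{Step 2: factorial moments.} Using \eqref{b1} and the cyclic exchangeable construction \eqref{ctd}--\eqref{ctd2}, compute $\E (N)_q$. Here $q$ copies of $T_\kk$ must occupy disjoint windows of the cyclic sequence. Overlapping windows are impossible, since two fringe occurrences of the same tree are disjoint: fringe trees are either nested or disjoint, and equal size excludes nesting. Counting placements of $q$ disjoint blocks of length $m$ on a cycle of length $n$ and using sampling without replacement gives
\begin{align}
\E (N)_q=\frac{n\,(n-qm+q-1)_{q-1}}{(n)_{qm}}\prod_{i}(n(i))_{q\,n_{T_\kk}(i)}
\end{align}
up to the correct combinatorial normalization; this is the formula \eqref{eq4G} referred to earlier. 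Then apply \eqref{eq5Gb} to show
\begin{align}
\E(N)_q=\mu_\kk^q\exp\bigpar{O(q^2m^2/n)}\,\bigpar{1+O(q^2 m/n)},
\end{align}
uniformly for $q$ in a suitable range.

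\emph{Step 3: conclusions.} In case \ref{ReGD10}, $m=O(\log n)$ whenever $\lambda>0$, so $\E(N)_q\to\lambda^q$ for each fixed $q$. This gives Poisson convergence with all moments. In case \ref{ReGD20}, apply Gao--Wormald, which requires
\begin{align}
\E(N)_q\sim\mu_\kk^q\exp\bigpar{-q^2/2\cdot\sigma^2\text{-correction}}
\end{align}
for $q$ up to order $\mu_\kk^{1/3}$ or so. Since the claimed variance is $\sim\mu_\kk$, the needed correction is $o(\mu_\kk)$ relative, i.e. we need the ratio $\E(N)_q/\mu_\kk^q=\exp(o(q^2/\mu_\kk))$. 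This follows if $m^2\mu_\kk/n\to0$, and $m^2\mu_\kk/n\le m^2(1-c)^m\to0$ since $m\to\infty$. Mean and variance convergence then come from $q=1,2$.

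The main obstacle is making the estimate in Step 2 uniform in $q$ for the Gao--Wormald range, and controlling the degree factor $\prod_i(n(i))_{qn_T(i)}/n(i)^{qn_T(i)}$ when some $n(i)$ are small. For such $i$, the ratio $qn_T(i)/n(i)$ need not be small. I would first bound that factor by 1 from above, and from below use that $\mu_\kk\to\infty$ forces $n(i)^{n_T(i)}\gg (n_T(i))!$ for each degree present.
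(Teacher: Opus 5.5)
Your overall route is the paper's. Your factorial-moment formula is exactly \eqref{eq4G}, since $n\,(n-qm+q-1)_{q-1}/(n)_{qm}=n/(n)_{qm-q+1}$, and you then use \eqref{eq5Gb}, the method of moments, and Gao--Wormald with $\mu_\kk=\sigma_\kk^2$. However, the step you call the main obstacle is a genuine gap, and your suggested fix does not close it. After \eqref{eq5Gb}, the numerator $\prod_i (n_\kk(i))_{q n_{T_\kk}(i)}$ contributes, relative to $\prod_i n_\kk(i)^{q n_{T_\kk}(i)}$, a factor $\exp\bigl(O\bigl(\sum_{i\in\cD(T_\kk)} q^2 n_{T_\kk}(i)^2/n_\kk(i)\bigr)\bigr)\le 1$. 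Applying \eqref{eq5Gb} at all already requires $q\,n_{T_\kk}(i)\le n_\kk(i)/2$. Your estimate $\mu_\kk^q\exp(O(q^2m^2/n))$ simply omits this factor, yet it must be shown to be $e^{o(1)}$, uniformly for $q=O(\mu_\kk^{1/2})$ in case (ii) and for each fixed $q$ in case (i). Otherwise you only have upper bounds. The criterion $n(i)^{n_T(i)}\gg n_T(i)!$ is not the relevant one. For a degree with $n_{T_\kk}(i)=1$ you need $n_\kk(i)\gg q^2\asymp\mu_\kk$, not merely $n_\kk(i)\to\infty$.

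The missing observation is that $\pi_{\bp(\bnk)}(T_\kk)$ contains a factor $p_i(\bnk)$ for \emph{every} $i\in\cD(T_\kk)$. So \eqref{ny2} gives $\pi_{\bp(\bnk)}(T_\kk)\le p_i(\bnk)(1-\delta)^{m-1}$, i.e.\ $\mu_\kk\le n_\kk(i)(1-\delta)^{m-1}$. Hence, for $q^2\le C\mu_\kk$, you get $q^2n_{T_\kk}(i)^2/n_\kk(i)\le Cm^2(1-\delta)^{m-1}p_i(\bn_{T_\kk})^2$, and the sum over $i$ is $O(m^2(1-\delta)^{m-1})=o(1)$. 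For fixed $q$ and $\mu_\kk\to\lambda>0$, the same bound divided by $\mu_\kk$ is $o(1)$. This is \eqref{rny02}, i.e.\ hypothesis \eqref{rny2} of Theorem~\ref{The1}, and it is exactly how the paper obtains the theorem.

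The second gap is the case $\lambda=0$ with $m^2/n$ unbounded. For $m\le\sqrt n$ nothing beyond \eqref{le1c} is needed: it gives $\E N\le C\mu_\kk\to0$, and similarly for higher factorial moments via \eqref{eq4G}, so your $C\log n$ threshold is beside the point. For $m>\sqrt n$ you correctly note that \eqref{le1b} loses the factor $n^m/(n)_m$, which can outweigh $(1-\delta)^m$, but ``use \eqref{le1a} instead'' is not yet an argument.

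The paper's device works with integers $0\le m(i)\le n(i)$ and $M=\sum_i m(i)$. Lowering a single $m(i)$ by $1$ multiplies the ratio $\prod_i (n(i))_{m(i)}/(n)_M$ by $(n-M+1)/(n(i)-m(i)+1)\ge1$. Lowering $\bn_{T_\kk}$ in this way to total size $\lceil\sqrt n\rceil$, where \eqref{eq5Gb} applies, bounds the ratio by $C(1-\delta)^{\sqrt n}$. Then \eqref{le1a} gives $\E N\le Cn(1-\delta)^{\sqrt n}\to0$, and since $N\le n/m<\sqrt n$ here, all moments tend to $0$.

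Two minor points. The Gao--Wormald range is $q\asymp\mu_\kk/\sigma_\kk=\mu_\kk^{1/2}$, not $\mu_\kk^{1/3}$; this is harmless, since the corrected estimate is uniform for $q=O(\mu_\kk^{1/2})$. Also, the uniform bound $\max_i p_i(\bnk)\le 1-\delta$ needs $p_i(\bn)<\frac12$ for $i\ge2$ (from \eqref{ari2}), not just pointwise convergence.
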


As said above, we prove \refT{The01} as a corollary of  a more general theorem,
stated below.
Note that we assume $\gl>0$ in \refT{The1}\ref{ReGD1}, unlike in \refT{The01};
we conjecture
that the result holds also for $\gl=0$, but we leave that as an open problem.
(The case $m_\kk=O(\sqrt{|\bn_\kk|})$ is clear by \eqref{le1c}.)

\begin{theorem} \label{The1}
  Assume \refCond{Condition1}. 
For $\kk \geq 1$, let $T_{\kappa} \in \mathbb{T}$ be such that
as \kktoo,
with $m_{\kappa} \coloneqq |T_{\kappa}|$,
\begin{align}
\label{rny2}
&m_\kk^2\pi_{\bp(\bn_{\kk})}(T_{\kk})\sum_{i\in\cD(T_\kk)}
\frac{p_i(\bn_{T_\kk})^2}{p_i(\bn_\kk)}
=o(1).
\end{align}
Then:
\begin{romenumerate}[-20pt]  
\item \label{ReGD1} 
If\/ $|\bn_{\kk}| \pi_{\mathbf{p}(\bn_{\kk})}(T_{\kappa}) \rightarrow \lambda$, 
for some $\lambda \in (0,\infty)$, then 
\begin{align} \label{regd1}
N_{T_\kk}(\cTbnk) \dto\Po\bigpar{\gl}
\end{align} 
with convergence of all moments.
\item \label{ReGD2} 
If\/ $|\bn_{\kk}| \pi_{\mathbf{p}(\bn_{\kk})}(T_{\kappa}) \rightarrow \infty$, then 
\begin{align}\label{regd2}
\frac{N_{T_\kk}(\cTbnk)- |\bn_{\kk}| \pi_{\mathbf{p}(\bn_{\kk})}(T_{\kappa})}{\sqrt{|\bn_{\kk}| \pi_{\mathbf{p}(\bn_{\kk})}(T_{\kappa})}} \dto\N\bigpar{0,1},
\end{align}
\end{romenumerate}  
with convergence of mean and variance.

Thus, in both cases,
\begin{align}\label{regd3}
\E\bigsqpar{N_{T_\kk}(\cTbnk)}
\sim
\Var\bigsqpar{N_{T_\kk}(\cTbnk)}
\sim
|\bn_{\kk}| \pi_{\mathbf{p}(\bn_{\kk})}(T_{\kappa}).   
\end{align}
\end{theorem}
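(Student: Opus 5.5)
The plan is to compute the factorial moments of $N_{T_\kk}(\cTbnk)$ exactly, then feed the resulting asymptotics into the Gao--Wormald theorem \cite[Theorem A.1]{SJ378}. Write $T=T_\kk$, $\bn=\bnk$, $m=m_\kk$, $\mu_\kk:=|\bn|\pi_{\bp(\bn)}(T)$.

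\emph{Factorial moments.} Using the representation \eqref{b1} with $\bd$ a uniform permutation of a fixed sequence as in \eqref{ctd2}, $(N_T)_q$ counts ordered $q$-tuples of distinct cyclic positions $j_1,\dots,j_q$ where a copy of $\bd_T$ starts. Two copies of the same tree cannot overlap partially: a fringe tree is either disjoint from, or nested in, another, and equal size forces equality. Hence the windows are disjoint. The number of admissible position tuples is $|\bn|(|\bn|-qm+q-1)_{q-1}$. Placing the windows in a cyclic arrangement and applying the same computation as in \cite[Lemma 3.1]{SJ378} gives
\begin{align*}
\E(N_T(\cTbn))_q=\frac{|\bn|(|\bn|-qm+q-1)_{q-1}}{(|\bn|)_{qm}}\prod_{i\in\cD(T)}(n(i))_{qn_T(i)} .
\end{align*}
This should be the formula alluded to in \eqref{eq4G}.

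\emph{Asymptotics.} Compare this with $\mu_\kk^q$. Writing each falling factorial with \eqref{eq5Gb}-type expansions, one gets
\begin{align*}
\frac{\E(N_T)_q}{(\E N_T)^q}
=\exp\Bigl(O\Bigl(\frac{q^2m^2}{|\bn|}\Bigr)\Bigr)\cdot
\prod_{i}\frac{(n(i))_{qn_T(i)}}{(n(i))_{n_T(i)}^q}\cdot(\cdots).
\end{align*}
The key quantity is the product over $i$. Its logarithm is $\approx-\binom q2\sum_i n_T(i)^2/n(i)$, to be compared with $-\binom q2 m^2/|\bn|$ from the denominator. The difference $\binom q2\bigl(m^2/|\bn|-\sum_i n_T(i)^2/n(i)\bigr)$ is at most of order $q^2 m^2|\bn|^{-1}\sum_i p_i(\bn_T)^2/p_i(\bn)$. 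Multiplying by $\mu_\kk$, this becomes exactly $q^2$ times the left side of \eqref{rny2}, which is $o(1)$.

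I expect this to be the main obstacle. Since $m$ may be of order $\sqrt{|\bn|}$ or larger, and $n_T(i)$ may be comparable to $n(i)$, the crude \eqref{eq5Gb} is not valid. One needs the exact expansion $\log(x)_k=k\log x-\frac{k^2}{2x}+O(k^3/x^2)$, or a monotonicity argument, and the condition \eqref{rny2} has to be used to control the error uniformly. There are two natural cases. If $\mu_\kk$ is bounded away from $0$, then \eqref{rny2} forces $m^2\sum_i p_i(\bn_T)^2/p_i(\bn)=o(|\bn|)$. This makes $n_T(i)=o(n(i))$ in the relevant sense, and the expansion is legitimate.

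\emph{Conclusion.} In case \ref{ReGD1}, the estimates give $\E N_T\to\gl$ and $\E(N_T)_q\to\gl^q$ for each fixed $q$, so Poisson convergence with all moments follows from the method of moments. In case \ref{ReGD2}, I would verify the Gao--Wormald hypothesis
\begin{align*}
\E(N_T)_q\sim\mu^q\exp\Bigl(\frac{q^2(\sigma^2-\mu)}{2\mu^2}\Bigr)
\end{align*}
uniformly for $q=O(1)$ (or slowly growing), with $\mu=\mu_\kk$. Here $\sigma^2-\mu=o(\mu)$, which follows from the $q^2\cdot o(1)/\mu$ correction. In particular, this requires $\E N_T=\mu_\kk(1+o(\mu_\kk^{-1/2}))$. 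I would obtain that from the $q=1$ formula, \eqref{le1a}, using the same expansion together with \eqref{rny2}. That theorem then yields \eqref{regd2} with the stated mean and variance asymptotics, and hence \eqref{regd3}. The variance claim in case \ref{ReGD1} is immediate from the convergence of the first two factorial moments.
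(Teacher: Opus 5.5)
Your proposal follows essentially the paper's proof: the factorial-moment formula \eqref{eq4G} (your version agrees with it, since $(|\bn|)_{qm}=(|\bn|)_{qm-q+1}(|\bn|-qm+q-1)_{q-1}$), the estimate \eqref{eq5Gb} with error exponent $O\bigl(q^2m^2/|\bn|+\sum_i q^2n_T(i)^2/n(i)\bigr)$ made small by \eqref{rny2}, the method of moments in case \ref{ReGD1}, and Gao--Wormald with $\mu=\sigma^2=|\bn|\pi_{\bp(\bn)}(T)$ in case \ref{ReGD2}. Three corrections: the crude \eqref{eq5Gb} already suffices and no refined expansion is needed (as you observe yourself, $\mu_\kk$ bounded below together with \eqref{rny2} gives $m^2=o(|\bn|)$ and $qn_T(i)=o(n(i))$); Gao--Wormald needs the estimate uniformly for $q$ of order $\mu_\kk^{1/2}$, not just for bounded or slowly growing $q$ (your $o(q^2/\mu_\kk)$ error is still $o(1)$ there); and the mean and variance in case \ref{ReGD2} come from the fixed-$q$ error $o(1/\mu_\kk)$ with $q=1,2$, not from Gao--Wormald.
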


Before proceeding to the proofs, note that \eqref{ari3} implies that, for any
tree $T$ and any degree statistic $\bn$,
\begin{align}\label{rny0}
  \sum_{i\in\cD(T)}\frac{p_i(\bn_T)^2}{p_i(\bn)}
\ge p_0(\bn_T)^2 + p_1(\bn_T)^2 \ge\frac12\bigpar{p_0(\bn_T)+p_1(\bn_T)}^2 
>\frac{1}{8}.
\end{align}
Hence, \eqref{rny2} implies 
\begin{align}\label{rny1}
 m_{\kk}^{2}  \pi_{\mathbf{p}(\bn_{\kk})}(T_{\kappa})=o(1).
\end{align}
It is easy to construct examples showing that the converse does not hold in
general; nevertheless, in typical applications
$\sum_{i\in\cD(T_\kk)}\frac{p_i(\bn_{T\kk})^2}{p_i(\bnk)}=O(1)$,
and then \eqref{rny2} is equivalent to \eqref{rny1}.

\begin{proof}[Proof of \refT{The01} from \refT{The1}]
  Since $p_0,p_1<1$, and \eqref{klas} implies $p_i\le 1/i$ for $i\ge2$, 
we have 
\begin{align} \label{EqEx1}
\max_{i \geq 0} p_{i} <1.
\end{align}
Note that \refCond{Condition1} \ref{Cond1b} says that
$\mathbf{p}(\mathbf{n}_{\kk})$ converges weakly to $\mathbf{p}$, as $\kk
\rightarrow \infty$. As is well known, this is equivalent to convergence in
total variation, and thus also to
\begin{align} \label{EqEx2}
\lim_{\kk \rightarrow \infty }\sup_{i \geq 0}|p_{i}(\mathbf{n}_{\kk}) - p_{i}|= 0.
\end{align}
Consequently, if we choose $\gd>0$ with $\max_{i\ge0}p_i<1-\gd$,
then for all sufficiently large $\kk$, we have
\begin{align} \label{EqEx3}
\max_{i \geq 0} p_{i}(\bn_\kk) \le1-\gd.
\end{align}
We consider such $\kk$ only, and then, 
using \eqref{ny2} and  $m_\kk\to\infty$,
\begin{align}\label{rny02}
&m_\kk^2\sum_{i\in\cD(T_\kk)}\frac{\pi_{\bp(\bn_{\kk})}(T_{\kk})}{p_i(\bn_\kk)}
p_i(\bn_{T_\kk})^2
\le
m_\kk^2\sum_{i\in\cD(T_\kk)}\bigpar{\max_{j\in\cD(T_\kk)}p_j(\bn_\kk)}^{m_\kk-1}
p_i(\bn_{T_\kk})^2
\notag\\&\hskip4em
\le m_\kk^2(1-\gd)^{m_\kk-1} \sum_{i\in\cD(T_\kk)}p_i(\bn_{T_\kk})
= m_\kk^2(1-\gd)^{m_\kk-1}
=o(1).
\end{align}
Thus 
\eqref{rny2} holds, and the result follows from
\refT{The1}, except in the case $\gl=0$ of Case \ref{ReGD10}.

In this exceptional case $\gl=0$, we note first that if
$m_\kk\le \sqrt{|\bn_\kk|}$, then 
\eqref{le1c} yields 
\begin{align}
\E N_{T_\kk}(\cT_{\bn_\kk}) \le C |\bn_\kk|\pi_{\bp(\bn_\kk)}(T_\kk) \to0
\end{align}
and thus \eqref{regd10} holds. We similarly obtain convergence of higher moments
$\E [N_{T_\kk}(\cT_{\bn_\kk})^q]
 \le C \xpar{|\bn_\kk|\pi_{\bp(\bn_\kk)}(T_\kk)}^q \to0$
for every fixed $q$ by \cite[Lemma 3.3]{SJ378} and \eqref{eq5Gb}. 

If $\gl=0$ and $m_\kk> \sqrt{|\bn_\kk|}$, let $m_\kk(i):=n_{T_\kk}(i)$, 
so that $\sum_{i\ge0}m_\kk(i)=m_\kk$. We may assume $m_\kk(i)\le n_\kk(i)$
since otherwise $\E N_{T_\kk}(\cT_{\bn_\kk}) =0$.
Consider the fraction
\begin{align}\label{ste1}
\frac{1}{(|\bn_\kk|)_{\sum_i m(i)}}  \prod_{i\ge0} (n_\kk(i))_{m(i)}
\end{align}
for arbitrary integer sequences $m(i)$ with $0\le m(i)\le n_\kk(i)$
(not necessarily degree statistics of any tree).
Let $m:=\sum_{i\ge0}m(i)$.
If $m(i)>0$ and we reduce $m(i)$ by 1 (keeping all other $m(j)$), then 
the fraction \eqref{ste1} is increased by a factor $\frac{|\bn_\kk|-m+1}{n_\kk(i)-m(i)+1}$,
which is $\ge1$ since
\begin{align}\label{ste2}
\bigpar{|\bn_\kk|-m+1}-\bigpar{n_\kk(i)-m(i)+1}
=\sum_{j\neq i} \bigpar{n_\kk(j)-m(j)}\ge0.  
\end{align}
We may repeat this until we have $m=\ceil{\sqrt{|\bn_\kk|}}$, and then we
obtain
using \eqref{eq5Gb} 
\begin{align}\label{ste3}
\frac{1}{(|\bn_\kk|)_{m}}  \prod_{i\ge0} (n_\kk(i))_{m(i)}  
\le \frac{C}{|\bn_\kk|^m} \prod_{i\ge0}(|\bn_\kk| p_i(\bn_\kk))^{m(i)}\le C(1-\gd)^{m}
\le C(1-\gd)^{\sqrt{|\bn_\kk|}}  
.\end{align}
Hence, the fraction \eqref{ste1} is $\le C(1-\gd)^{\sqrt{|\bn_\kk|}}$
when $\sum_{i\ge0}m(i) >\sqrt{|\bn_\kk|}$. Returning to
$m_\kk(i)=n_{T_\kk}(i)$,
we thus obtain by \eqref{le1a}
\begin{align}\label{ste4}
\E N_{T_\kk}(\cT_{\bn_\kk})  \le 
C|\bn_\kk|(1-\gd)^{\sqrt{|\bn_\kk|}}
\to0,
\end{align}
which implies \eqref{regd10}.
Again, we similarly obtain convergence also of higher moments to 0
using \cite[Lemma 3.3]{SJ378} and \eqref{eq5Gb};
we omit the details.
\end{proof}

\begin{proof}[Proof of \refT{The1}]
Let 
\begin{align}
  \label{glkk}
\gl_\kk:=|\bn_{\kk}|\pi_{\mathbf{p}(\bn_{\kk})}(T_{\kappa}).
\end{align}
Thus $\gl_\kk\to\gl\in(0,\infty]$, where we define $\gl:=\infty$ in Case
\ref{ReGD2}.
Note that, as said above, \eqref{rny2} and \eqref{rny0} imply \eqref{rny1}.

Let
throughout the proof 
$q_{\kk} \in \mathbb{N}$ be such that 
\begin{align}  \label{ny3}
q_{\kk} = O\bigpar{\gl_\kk\qq}.
\end{align}
Then \eqref{ny3}, \eqref{glkk}, \eqref{rny1}, and \eqref{rny2} imply that,
for some $C<\infty$, 
\begin{align} \label{N1e}
\frac{q^{2}_{\kk}m_{\kk}^{2}}{|\mathbf{n}_{\kk}|} &
\le C
m_{\kk}^{2}  \pi_{\mathbf{p}(\bn_{\kk})}(T_{\kappa})
= o(1),
\\\label{N1f}
\sum_{i\in \cD(T_{\kappa})} \frac{q_{\kk}^{2}n_{T_{\kappa}}(i)^{2}}{n_{\kappa}(i)}
&\le C m_\kk^2\pi_{\mathbf{p}(\bn_{\kk})}(T_{\kappa})
\sum_{i\in \cD(T_{\kappa})} \frac{p_i(\bn_{T_{\kappa}})^{2}}{p_i(\bn_{\kk})}
=o(1)
.\end{align}
In particular, 
by \eqref{N1e}, 
we have for $\kappa$ large enough 
$q_{\kappa} m_{\kappa}\le|\bn_{\kappa}|\qq\le |\bn_{\kappa}|$. 
Then, by \cite[Lemma 3.3 (i)]{SJ378}, 
\begin{align} \label{eq4G}
\E (N_{T_{\kappa}}(\mathcal{T}_{\mathbf{n}_{\kappa}}))_{q_{\kappa}} =
\frac{|\mathbf{n}_{\kappa}|}{(|\mathbf{n}_{\kappa}|)_{q_{\kappa}m_{\kappa}-q_{\kappa}+1}} 
  \prod_{i\in\cD(T_\kk)} (n_{\kappa}(i))_{q_{\kappa}n_{T_{_{\kappa}}}(i)}.
\end{align}
By \eqref{N1e} and \eqref{N1f}, for $\kk$ large enough,
$q_\kk m_\kk\le |\bn_\kk|/2$
and
$q_\kk n_{T_\kk}(i) \le n_\kk(i)/2$ for all $i\in\cD(T_\kk)$;
then, \eqref{eq4G}, \eqref{eq5Gb}, \eqref{pin},
\eqref{N1e}, \eqref{N1f}, \eqref{pip}, and \eqref{glkk} imply that
\begin{align}
\E (N_{T_{\kappa}}(\mathcal{T}_{\mathbf{n}_{\kappa}}))_{q_{\kk}}&   
= |\bn_{\kappa}|^{-q_\kk m_\kk+q_\kk}\prod_{i\in\cD(T_\kk)} n_\kk(i)^{q_\kk n_{T_\kk}(i)}
\cdot \exp \Bigpar{
O\Bigpar{\frac{q_{\kk}^{2}m_{\kk}^{2}}{|\bn_{\kk}| } + 
\sum_{i\in \cD(T_{\kappa})} \frac{q_{\kk}^{2}n_{T_{\kappa}}(i)^{2}}{n_{\kappa}(i)}} }
\notag\\&
= |\bn_{\kappa}|^{q_\kk}\prod_{i\in\cD(T_\kk)} p_i(\bn_\kk)^{q_\kk n_{T_\kk}(i)}
\cdot \exp \bigpar{o(1)}
\notag\\&
= \gl_\kk^{q_{\kk}}  
\cdot  \exp \bigpar{o(1)}
\label{eq11G}
.\end{align}

In Case \ref{ReGD1}, 
\eqref{ny3} holds
for all $q_{\kk} = q \in \mathbb{N}$ fixed (since we have assumed $\gl>0$), 
and thus \eqref{eq11G} shows that
\begin{align}\label{eq12s} 
\E (N_{T_{\kappa}}(\mathcal{T}_{\mathbf{n}_{\kappa}}))_{q} \sim 
\gl_\kk^{q}
\to \gl^q
,\end{align}
\noindent as $\kk \rightarrow \infty$.
Hence  the claim in \ref{ReGD1} follows by the methods of moments;
\eqref{regd3} follows too.

In Case \ref{ReGD2}, it follows from \eqref{eq11G} that, 
for $q_{\kk}= O(\gl_\kk\qq)$,
\begin{align} 
\E (N_{T_{\kappa}}(\mathcal{T}_{\mathbf{n}_{\kappa}}))_{q_{\kk}}&   
= \gl_\kk^{q_{\kk}} \cdot \exp \Big(\frac{\gl_\kk-\gl_\kk}{2\gl_\kk^2}q_\kk^2
+ o(1)\Big).
\end{align}
\noindent 
Hence,  \eqref{regd2} follows by applying the 
Gao--Wormald theorem \cite[Theorem 1]{GW2004} (or \cite[Theorem A.1]{SJ378} 
with $m=1$) with 
$\mu_{\kk} = \sigma_{\kk}^{2} = \gl_\kk$.
Moreover, for a fixed $q$, \eqref{N1e} and \eqref{N1f} are improved to
\begin{align} \label{N1e2}
\frac{q^{2}m_{\kk}^{2}}{|\mathbf{n}_{\kk}|} &
=q^2
\frac{m_{\kk}^{2}  \pi_{\mathbf{p}(\bn_{\kk})}(T_{\kappa})}{\gl_\kk}
=o\Bigpar{\frac{1}{\gl_\kk}},
\\\label{N1f2}
\sum_{i\in \cD(T_{\kappa})} \frac{q^{2}n_{T_{\kappa}}(i)^{2}}{n_{\kappa}(i)}
&=q^2\frac{ \pi_{\mathbf{p}(\bn_{\kk})}(T_{\kappa})}{\gl_k}
\sum_{i\in \cD(T_{\kappa})} \frac{m_\kk^2 p_i(\bn_{T_{\kappa}})^{2}}{p_i(\bn_{\kk})}
=o\Bigpar{\frac{1}{\gl_\kk}}
.\end{align}
Hence we can for fixed $q$ improve \eqref{eq11G} to
\begin{align}
\E (N_{T_{\kappa}}(\mathcal{T}_{\mathbf{n}_{\kappa}}))_{q}&   
= \gl_\kk^{q}  
\cdot  \exp \bigpar{o(1/\gl_\kk)}
= \gl_\kk^{q}  + o\bigpar{\gl_\kk^{q-1}}
\label{eq11G+}
.\end{align}
Taking $q=1$ in \eqref{eq11G+} yields
$\E\sqpar{N_{T_\kk}(\cTbnk)}=\gl_\kk+o(1)\sim \gl_\kk$, and 
then taking $q=2$ yields
\begin{align}
  \Var\bigsqpar{N_{T_\kk}(\cTbnk)}
=
\E\sqpar{(N_{T_\kk}(\cTbnk))_2}
+
\E\sqpar{N_{T_\kk}(\cTbnk)}
- \E\sqpar{(N_{T_\kk}(\cTbnk))}^2
=\gl_\kk+o(\gl_\kk),
\end{align}
which proves \eqref{regd3} and thus convergence of mean and variance in
\eqref{regd2}.
\end{proof}

\begin{remark}\label{Rbad?}
The reason that we exclude the case $\gl=0$ from \refT{The1} 
is that we have been unable to exclude the possibility that in some extreme
cases we might have 
$|\bn_\kk|\pi_{\mathbf{p}(\bn_{\kk})}(T_{\kappa}) \rightarrow 0$
and \eqref{rny2} but $\E N_{T_\kk}(\cTbnk)\not\to0$.
This cannot happen in nice cases such as \refT{The01}, and we conjecture
that it never happens, but we leave it as an open problem.
See also \refC{Cstein}, where 
$|\bn_\kk|\pi_{\mathbf{p}(\bn_{\kk})}(T_{\kappa})$
is replaced by
$\ENTTk$, and $\gl=0$ is included.
\end{remark}

\begin{remark}
  We do not know whether convergence of higher moments hold in Case
  \ref{ReGD20} of \refTs{The01} and \ref{The1}
without further assumptions, and leave that as an open problem.
\end{remark}

\begin{remark}\label{Rny}
It seems likely that
also in some cases where \eqref{rny2} does not hold, it might be
possible to obtain a central limit theorem, as in \eqref{regd2} but with a
different asymptotic variance, by using a more precise version of
\eqref{eq5Gb} with an explicit first term in the exponent, 
see \cite[Lemma 4.1]{SJ378}.
We have not explored this.
\end{remark}

\begin{example}\label{EX1}
We give an example to show that in extreme cases, when the conditions of the
theorems are not satisfied, the conclusion may fail.

For $\kk\ge2$, let $T_\kk$ be the star with $\kk$ vertices and let
  $\bn_\kk:=\bn_{T_\kk}$; thus $n_\kk(0)=\kk-1$ and $n_\kk(\kk-1)=1$.
Since $T_\kk$ is the only tree with this degree statistic, we have
$\cT_{\bn_\kk}=T_\kk$ a.s., and thus 
\begin{align}\label{ex1a}
N_{T_\kk}(\cT_{\bn_\kk})=1.  
\end{align}
On the other hand, we have $p_0(\bn_\kk)=(\kk-1)/\kk=1-1/\kk$ and
$p_{\kk-1}(\bn_\kk)=1/\kk$, and thus 
\begin{align}\label{ex1b}
|\bn_\kk|\pi_{\bp(\bn_\kk)}(T_\kk) =\kk\Bigpar{1-\frac{1}{\kk}}^{\kk-1}\frac{1}{\kk}  
=\Bigpar{1-\frac{1}{\kk}}^{\kk-1}
\to e\qw.
\end{align}
Hence, in this example, $|\bn_\kk|\pi_{\bp(\bn_\kk)}(T_\kk)$ converges, but
the Poisson convergence \eqref{regd1} fails (for any $\gl$); moreover,
$|\bn_\kk|\pi_{\bp(\bn_\kk)}(T_\kk)\sim e\qw$ 
is a rather bad approximation of $\E N_{T_\kk}(\cT_{\bn_\kk})=1$.
It is easily verified that the condition \eqref{rny2} does not hold in this
example. 
\end{example}

\begin{example}
We give an example to show that the Poisson case of \refT{The01} may occur. Let $\mathbf{n}_{\kappa} = (n_{\kappa}(i))_{i \geq 0}$, $\kk \geq 1$, be
degree statistics such that \refCond{Condition1} is satisfied. Let $\mathcal{I}$ be a finite subset of $\{2, 3, \dots \}$. Assume further that  $p_{i} >0$ for $i \in \mathcal{I} \cup \{0,1\}$ and $n_{\kk}(1) = |\mathbf{n}_{\kk}| p_{1}+o\Bigpar{ \frac{|\mathbf{n}_{\kk}|}{\ln  |\mathbf{n}_{\kk}|}}$. Let  $b_{i}\in\bbNo$ for $i \in \mathcal{I}$ and let $T_{\kk} \in \mathbb{T}$ be such that 
\begin{align} \label{eqEj1new}
n_{T_{\kk}}(0) = \sum_{i \in \mathcal{I}} (i-1)b_{i}+1,\qquad
n_{T_{\kk}}(1) =   \frac{\ln |\mathbf{n}_{\kk}|}{-\ln p_{1}} +o(1),
\quad \quad   n_{T_{\kk}}(i) = b_{i}
\text{\quad for }i\in\cI,
\end{align}
and $n_{T_{\kk}}(i)=0$ for $i \not \in \mathcal{I}$. Note that this is possible for some
(but not all) sequences $\bn_\kk$.

 It follows from \eqref{eqEj1new} that 
$m_{\kappa} = |T_{\kk}|\ge n_{T_\kk}(1) \rightarrow \infty$, as $\kappa \rightarrow \infty$. 
To apply \refT{The01}\ref{ReGD10}, we only need
to verify that $|\bn_{\kk}| \pi_{\mathbf{p}(\bn_{\kk})}(T_{\kappa})
\rightarrow \lambda$ for some $\lambda \in (0,\infty)$.  

We obtain from \eqref{pip} and \eqref{eqEj1new}
\begin{align} \label{eqEj2new}
&\ln \left( |\bn_{\kk}| \pi_{\mathbf{p}(\mathbf{n}_{\kk})}(T_{\kk}) \right)  
= \ln |\mathbf{n}_{\kk}| + n_{T_{\kk}}(0) \ln p_{0}(\mathbf{n}_{\kappa}) + n_{T_{\kk}}(1) \ln p_{1}(\mathbf{n}_{\kappa}) + \sum_{i \in  \mathcal{I}}n_{T_{\kk}}(i) \ln p_{i}(\mathbf{n}_{\kappa})
\nonumber \\& \quad
= \ln |\mathbf{n}_{\kk}| 
+\Bigpar{\sum_{i \in \mathcal{I}} (i-1)b_{i}+1} \ln p_{0}(\mathbf{n}_{\kappa}) 
+n_{T_{\kk}}(1) \ln p_{1}(\mathbf{n}_{\kappa})
+ \sum_{i \in \mathcal{I}} b_{i} \ln p_{i}(\mathbf{n}_{\kappa}).
\end{align}
We have
\begin{align}\label{eqEj2.5new}
n_{T_{\kk}}(1) \ln p_{1}(\mathbf{n}_{\kappa}) & = -\frac{\ln |\mathbf{n}_{\kk}|}{\ln p_{1}}  \ln \left(\frac{|\mathbf{n}_{\kk}| p_{1}+o\Bigpar{ \frac{|\mathbf{n}_{\kk}|}{\ln  |\mathbf{n}_{\kk}|}}}{|\mathbf{n}_{\kk}|} \right) + o(1) \nonumber \\
& = -\frac{\ln |\mathbf{n}_{\kk}|}{\ln p_{1}} \ln \left(p_{1}+o\Bigpar{\frac{1}{\ln |\mathbf{n}_{\kk}|}} \right)  + o(1)
= -\ln |\mathbf{n}_{\kk}| +o(1)
\end{align}
and thus \eqref{eqEj2new} implies
\begin{align} \label{eqEj4new}
\ln \left( |\bn_{\kk}| \pi_{\mathbf{p}(\mathbf{n}_{\kk})}(T_{\kk}) \right) 
&= \ln |\mathbf{n}_{\kk}| 
+\Bigpar{\sum_{i \in \mathcal{I}} (i-1)b_{i}+1} \ln p_{0}
-\ln |\mathbf{n}_{\kk}|
+ \sum_{i \in \mathcal{I}} b_{i} \ln p_{i} +o(1)
\notag\\&
\to
\Bigpar{\sum_{i \in \mathcal{I}} (i-1)b_{i}+1}  \ln p_{0}
+ \sum_{i \in \mathcal{I}} b_{i} \ln p_{i}
.\end{align}
Hence, $|\bn_{\kk}| \pi_{\mathbf{p}(\bn_{\kk})}(T_{\kappa}) \rightarrow
\lambda$ with $\gl := p_{0}^{\sum_{i \in \mathcal{I}} (i-1)b_{i}+1} \prod_{i
  \in \mathcal{I}}p_{i}^{b_{i}}$, and thus \refT{The01}\ref{ReGD10} yields
$N_{T_\kk}(\cT_{\bn_\kk})\dto\Po(\gl)$.
\end{example}

\subsection{Stein’s method with coupling}\label{SSstein2}

In this subsection we give an alternative proof of (most of) the results above,
using Stein's methods with couplings, see \cite{SJI} for a general description.
(The proof is not completely independent of the proof above, since we reuse
the calculation of the first two moments from \eqref{eq11G}.)
The main point of doing this is not to just give an alternative proof,
but to give the quantitative error bound \eqref{jm1}.

\begin{remark}
 In the results below, we approximate the distribution of
$N_T(\cTbn)$ with the Poisson distribution with the same mean $\E[N_T(\cTbn)]$,
instead of using its approximation $|\bn|\pi_{\mathbf{p}(\bn)}(T)$ as 
in \refTs{The01} and \ref{The1}.

It follows from \eqref{jmb2} below,
see also \eqref{eq11G+},
that the difference usually is unimportant. 
In particular, 
using the notation below, 
if $\gD<\frac14\gl$ (which can be relaxed to $\gD=O(\gl)$), then
\eqref{jmb2} holds, and thus 
\eqref{jm1} holds also for  
$\dtv\bigpar{ N_T(\cTbn),\Po(|\bn|\pi_{\bp(\bn)}(T))}$.
See also \refR{Rbad?}.
\end{remark}

\begin{theorem}\label{Tstein}
Let $\bn$ be a degree statistic and $T\in\bbT$ a tree, 
and let $n:=|\bn|$ and $m:=|T|$.
There exists a universal constant $C$, not depending on $\bn$ or $T$, such 
that if\/ $\mathcal{T}_{\mathbf{n}}\simx {\rm Unif}(\mathbb{T}_{\mathbf{n}})$,
then, with $\gl:=\E N_T(\cTbn)$,
\begin{align}\label{jm1}
\dtv\bigpar{ N_T(\cTbn),\Po(\gl)}
\le C\gl\sum_{i\in\cD(T)}\frac{n_T(i)^2}{n(i)}
= 
C m^2\frac{\gl}{n}\sum_{i\in\cD(T)}\frac{p_i(\bn_T)^2}{p_i(\bn)}
.\end{align}
\end{theorem}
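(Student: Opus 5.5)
We plan to use Stein's method with a monotone coupling, in the form of \cite{SJI} (Barbour--Holst--Janson): if $W=\sum_{j} I_j$ and for each $j$ there is a coupling $(W, W_j)$ with $W_j+1$ distributed as $W$ conditioned on $I_j=1$ and $W_j\le W-I_j$ (negative relation), then
\begin{align*}
\dtv\bigpar{W,\Po(\gl)}\le \bigpar{1\wedge\gl\qw}\bigpar{\gl-\Var W}.
\end{align*}
We use the cyclic representation \eqref{b1}: $W:=N_T(\cTbn)=\sum_{j=1}^n I_j$ with $I_j:=\bigindic{(d_{j+i-1})_{i=1}^{m}=\bd_T}$, where $\bd$ is a uniform element of $\bbB_\bn$, i.e.\ a uniformly random permutation of a fixed sequence as in \eqref{ctd2}.

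\emph{The coupling.} Fix $j$. Starting from $\bd$, we build $\bd^{(j)}$, uniform on $\bbB_\bn$ conditioned on $I_j=1$. For each position $j+i-1$, $i\le m$, the required value is $d_{T,i}$. We realise this by swapping entries: process the positions of the window in order, and whenever $d_{j+i-1}\ne d_{T,i}$, swap it with a uniformly chosen position outside the window holding value $d_{T,i}$. Equivalently, we choose a uniform permutation conditioned on the window. This yields the correct conditional law by symmetry of the uniform permutation. Set $W_j+1:=\sum_k I_k(\bd^{(j)})$. The key structural fact is that two distinct fringe trees are either disjoint or nested, and two copies of the same tree $T$ cannot be nested. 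Hence occurrences of $\bd_T$ as cyclic windows that are counted are pairwise disjoint, so no window overlapping $[j,j+m-1]$ other than $j$ itself can count in $\bd^{(j)}$.

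\emph{Handling monotonicity.} The swaps can destroy occurrences, namely windows overlapping the modified positions, and can also create new ones. So the coupling is not obviously monotone. We therefore expect to use the general coupling bound rather than the monotone one:
\begin{align*}
\dtv\le (1\wedge\gl\qw)\sum_j \E I_j\,\E|W-W_j| \quad\text{(roughly)}.
\end{align*}
We then bound $\E|W_j-(W-I_j)|$. Destroyed occurrences are windows that overlap the window at $j$ (these contribute only when $I_j=0$, an $O(\E I_j)$-type effect) or contain a swapped-in position. Created occurrences must contain a swapped position. A swap of a value $i$ moves into a window only with probability controlled by the count of $i$'s. The expected number of occurrences touching a given swapped position is about $m\cdot\gl/n$ times a factor $n_T(i)/n(i)$ reflecting conditioning on value $i$ at that spot. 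Summing over the at most $n_T(i)$ swaps of each value $i$ gives $\gl\sum_i n_T(i)^2/n(i)$, and also the term $m^2\gl/n$, which is dominated since $\sum_i n_T(i)^2/n(i)\ge m^2/n$ by Cauchy--Schwarz. Multiplying by $\min(1,\gl\qw)\cdot\gl$ gives \eqref{jm1}.

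\emph{Main obstacle.} The hard step is the precise estimate of the expected number of occurrences of $T$ created or destroyed through the swapped positions. This requires the joint probability that a window equals $\bd_T$ given that a fixed position carries value $i$, which is $\E I_k\cdot n_T(i)/n(i)$ up to constants, via exact formulas like \eqref{le1a}. An alternative route, which we would try first since it reuses \eqref{eq11G}, is to show the coupling is in fact negatively related after discarding swap effects, and bound $\gl-\Var W$ directly using the second factorial moment \eqref{eq4G}. In \eqref{eq4G}, the ratio $\E(W)_2/\gl^2-1$ is $O\bigpar{\sum_i n_T(i)^2/n(i)}$ by \eqref{eq5Gb}, giving the same bound. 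Care is needed when $q n_T(i)>n(i)/2$; there the bound is trivial since the right side of \eqref{jm1} is then $\ge c\gl$, and $\dtv\le1$ covers the remaining case.
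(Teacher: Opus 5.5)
Your set-up is the paper's. You use the Barbour--Holst--Janson bound with $W_j+1\eqd(W\mid I_j=1)$, realised by rearranging the uniform sequence so that $\cI_j$ reads $\bd_T$. Your count of occurrences \emph{destroyed} at outside positions from which a needed value $i$ was taken (at most $n_T(i)$ such positions, each lying in about $\gl n_T(i)/n(i)$ original occurrences) is essentially \eqref{tau4}--\eqref{tau5}. The gap is the other half of $\E|W-W_j|$: the occurrences \emph{created} in $\bd^{(j)}$, which you call the main obstacle and leave open.

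Your counting does not cover created occurrences. A modified position in $\bd^{(j)}$ carries a value expelled from the window, about $m\,p_i(\bn)$ of each value $i$ rather than at most $n_T(i)$. Its surroundings have the law conditioned on $I_j=1$, not the original one. So ``probability controlled by the count of $i$'s'' is not an estimate. The fallback does not close the gap either. The bound $(1\wedge\gl^{-1})(\gl-\Var W)$ needs negative relation, i.e.\ a coupling with $I_\ell(\bd^{(j)})\le I_\ell$ for all $\ell\ne j$. Your coupling violates this, you give no other, and ``discarding swap effects'' does not produce a coupling.

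The missing idea, which is the paper's \eqref{ari10}, is never to estimate created occurrences. Let $\cA_j$ be the set of $\ell$ with $\cI_\ell$ meeting $\cI_j$ or a modified position. Off $\cA_j$ the indicators agree, and $j\in\cA_j$ with $I_j(\bd^{(j)})=1$, so
\[
\sum_{\ell\ne j}\bigabs{I_\ell-I_\ell(\bd^{(j)})}\le\sum_{\ell\in\cA_j}\bigpar{I_\ell+I_\ell(\bd^{(j)})}-1
=\sum_{\ell=1}^{n}\bigpar{I_\ell(\bd^{(j)})-I_\ell}-1+2\sum_{\ell\in\cA_j}I_\ell .
\]
With $\pi:=\gl/n$, one has $\pi\sum_j\E\sum_\ell\bigpar{I_\ell(\bd^{(j)})-I_\ell}=\Var W$ exactly, as in \eqref{tau2}. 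Hence
\[
\dtv\bigpar{W,\Po(\gl)}\le\pi+\frac{\Var W-\E W}{\E W}+\frac{2}{n}\sum_j\E\sum_{\ell\in\cA_j}I_\ell .
\]
Set $\gD:=\gl\sum_{i\in\cD(T)}n_T(i)^2/n(i)$. The last term involves only original occurrences:
\begin{itemize}
\item Those meeting $\cI_j$ contribute $(2m-1)\pi$, not $O(\E I_j)$ as you wrote. This is absorbed since $m\pi\le m^2\pi\le 8\gD$ by \eqref{rny0}.
\item Those through modified positions contribute $O(\gD)$ by your count.
\end{itemize}
The middle term equals $\gl\bigpar{\E(W)_2/\gl^2-1}=O(\gD)$ by precisely the ratio estimate from \eqref{eq4G} and \eqref{eq5Gb} that you propose. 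This holds after reducing to $\sum_i n_T(i)^2/n(i)<\frac14$ via the trivial bound $\dtv\le 2\gl$; $\dtv\le 1$ is not enough there. So you had both ingredients; this identity is what joins them.

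Two smaller slips:
\begin{itemize}
\item Given value $i$ at a fixed position, the expected number of occurrences through it is $m(\gl/n)\,p_i(\bn_T)/p_i(\bn)=\gl n_T(i)/n(i)$. It is not $m(\gl/n)\,n_T(i)/n(i)$, which would only give $(m/n)\gD$.
\item Your swap rule breaks down when no outside copy of the needed value remains. The paper avoids this by marking $n_T(i)$ of the $n(i)$ copies of each $i$ uniformly at random and moving the marked ones into $\cI_j$. This also yields the deterministic denominators in \eqref{tau4}.
\end{itemize}
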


\begin{proof}
Let $\pi:=\gl/n$ and,
recalling \eqref{pin},
\begin{align}\label{jm2}
\gD:=\gl\sum_{i\in\cD(T)}\frac{n_T(i)^2}{n(i)}
= 
 m^2\pi\sum_{i\in\cD(T)}\frac{p_i(\bn_T)^2}{p_i(\bn)},
\end{align}
so \eqref{jm1} can be written $\dtv\bigpar{ N_T(\cTbn),\Po(\gl)}\le C\gD$.
Note that for any random variable $X$ with values in $\bbNo$, Markov's
inequality implies
$\dtv(X,0)=\P(X\ge1) \le \E X$.
Hence,
\begin{align}
  \label{jm3}
\dtv\bigpar{ N_T(\cTbn),\Po(\gl)}
\le
\dtv\bigpar{ N_T(\cTbn),0}
+
\dtv(0,\Po(\gl))
\le 2\gl.
\end{align}
Consequently, if $\gD\ge \frac{1}{4}\gl$, then \eqref{jm1} is trivial (for
any $C\ge8$). We may thus assume $\gD<\frac{1}{4}\gl$, which by \eqref{jm2}
means $\gl>0$ and
\begin{align}
  \label{jm4}
\sum_{i\in\cD(T)}\frac{n_T(i)^2}{n(i)}
<\tfrac14.
\end{align}
It follows that for every $i\ge0$, we have
\begin{align}
  \label{jm5}
n_T(i) \le n_T(i)^2 \le \tfrac14 n(i),
\end{align}
and by summing over $i\ge0$ we have also
\begin{align}
  \label{jm6}
m=|T| \le \tfrac14 n.
\end{align}

After these preliminaries, we come to the main part of the proof.
We assume that $\cTbn$ is constructed by \eqref{ctd} from a uniformly random
sequence $\bd=(d_1,\dots,d_n)$ in $\bbB_\bn$, see \refSS{TreesandDegrees},
and recall that $d_\ell$ is interpreted cyclically, with indices taken modulo $n$.
Define the random indicator variables, for $k=1,\dots,n$,
\begin{align} 
I_k=I_{k}(\mathbf{d},T) & 
\coloneqq \bigindic{(d_{k+i-1})_{i=1}^m=\bd_{T}}, \label{jm7}
\end{align} 
and
note that $I_k$ depends only on the degrees $d_i$ for
$i\in\cI_k:=\set{k,\dots,k+m-1}$. 
Then,  \eqref{b1} gives
\begin{align}\label{jm8}
  N_T(\cTbn)=W:=\sumkn I_k.
\end{align}
$\E I_k$ does not depend on $k$, by the rotational symmetry;
hence, for every $k\in[n]$,
\begin{align}\label{jm9}
\E I_k=\E[W]/n=\gl/n = \pi.
\end{align}

In order to use Steins method with couplings, we also need to study the sum $W$
when we condition on a given $I_k$ to be 1; we do this using the following
construction.
(We assume that $n_T(i)\le n(i)$ for each $i$, as we may by \eqref{jm5};
note also that otherwise $\cTbn$ cannot contain any fringe tree equal to $T$, 
so $W=0$ and we cannot condition on $I_k=1$.)
Fix $k\in[n]$ and 
take a random sequence $\bd\in\bbB_\bn$. Then
\begin{enumerate}[label=\upshape\arabic*.]
\item 
For each $i\ge0$, consider the $n(i)$
degrees $d_\ell$ in the sequence that equal $i$, and mark $n_T(i)$
of them, chosen uniformly at random. 
\item 
Remove the $m$ degrees $d_\ell$ for 
$\ell\in\cI_k$
(i.e., the degrees that determine $I_k$), 
and put them in a storage room.
\item \label{cop3}
Move the $m$ marked degrees, whether removed or not, to the now vacant 
positions in $\cI_k$, 
in the correct order such that they form a copy of $\bd_T$.
\item 
Return the remaining (unmarked) degrees from the storage, and put them in
the positions made vacant by \ref{cop3}, in uniformly random order.
\end{enumerate}
Denote the resulting degree sequence by $\bhdk=(\hd_i\xk)_1^n$.
(We again interpret the index modulo $n$.) 
It is clear from the construction that $\bhdk\in\bbB_\bn$, and that the
distribution of $\bhdk$ equals the conditional distribution of $\bd$ given
$I_k=1$. (This means just that the degrees $d_\ell$ with $\ell\notin\cI_k$
are in uniformly random order.)
We define, for $\ell=1,\dots,n$,
\begin{align} \label{ari5}
J_{\ell k}
\coloneqq 
I_\ell(\bhdk,T)=
\bigindic{(\hdk_{\ell+i-1})_{i=1}^m=\bd_{T}},
\end{align} 
the indicator that $\bd_T$ appears at position $\ell$ in $\bhdk$.
(In particular, by definition, $J_{kk}=1$.)
Then, the distribution of the sequence $(J_{\ell k})_{\ell=1}^n$
equals the conditional distribution of 
$(I_\ell)_{\ell=1}^n$ given $I_k=1$.
This is the property required in \cite[(2.1.1)]{SJI}.
Hence it follows from \cite[(2.1.2), see also Theorem 2.A]{SJI} that
\begin{align}\label{ari8}
  \dtv\bigpar{W,\Po(\gl)}
\le \frac{1-e^{-\gl}}{\gl} \sumkn \pi 
\E\Bigabs{I_k +\sum_{\ell\neq k}(  I_\ell-J_{\ell k})}.
\end{align}
We ignore for simplicity the factor $1-e^{-\gl}$ (which is important only
for $\gl\to0$), and obtain from \eqref{ari8} and $\gl=n\pi$,
\begin{align}\label{ari9}
  \dtv\bigpar{W,\Po(\gl)}
\le \frac{1}{n} \sumkn \Bigpar{
\E I_k + \E\sum_{\ell\neq k}|  I_\ell-J_{\ell k}|}
=
\pi + \frac{1}{n} 
\E\sumkn \sum_{\ell\neq k}|I_\ell-J_{\ell k}|.
\end{align}
Let $\cA_k$ be the random set of indices $\ell\in[n]$ such that
the set of indices $\cI_\ell$ contains 
either the original position of some marked degree, or 
at least one of the indices in $\cI_k$.
If $\ell\notin\cA_k$, then the degrees 
$(\hd\xk_i,\,i\in\cI_\ell) = (d_i,\,i\in\cI_\ell)$, and thus
$J_{\ell k}=I_\ell$. Hence, recalling $J_{kk}=1$,
\begin{align}\label{ari10}
  \sumkn \sum_{\ell\neq k}|I_\ell-J_{\ell k}|
&= \sumkn\sum_{\ell\in\cA_k\setminus\set k}|I_\ell-J_{\ell k}|
\le  \sumkn\sum_{\ell\in\cA_k\setminus\set k}(I_\ell+J_{\ell k})
\notag\\&
\le  \sumkn\Bigpar{\sum_{\ell\in\cA_k}(I_\ell+J_{\ell k})-1}
\notag\\&
=
 \sumkn\sum_{\ell\in\cA_k}(J_{\ell k}-I_\ell)
+ 2  \sumkn\sum_{\ell\in\cA_k}I_\ell-n
\notag\\&
=
 \sumkn\sumln(J_{\ell k}-I_\ell)
-n
+ 2  \sumkn\sum_{\ell\in\cA_k}I_\ell.
\end{align}
We take the expectation, and note first that
(as in similar calculations in e.g.\ \cite[(2.1.4)]{SJI})
\begin{align}\label{tau1}
\pi  \sumkn\sumln\E J_{\ell k}&
=
\sumkn\P(I_k=1)\sumln \E [I_\ell\mid I_k=1]
=
\sumkn\sumln \E [I_\ell I_k]
=\E [W^2],
\\
\pi  \sumkn\sumln \E I_{\ell}&
=
n^2\pi^2
=(\E W)^2,
\end{align}
and thus
\begin{align}\label{tau2}
\pi\E \sumkn\sumln(J_{\ell k}-I_\ell)
=\E[W^2]-(\E W)^2
=\Var W.
\end{align}
For the final sum in \eqref{ari10}, we note that for each $k$,
we have $2m-1$ indices $\ell$ for which $\cI_\ell\cap\cI_k\neq\emptyset$;
these give the contribution
\begin{align}\label{tau3}
 \E \sumkn\sum_{|\ell-k|<m} I_{\ell}
= n(2m-1)\pi =(2m-1)\gl.
\end{align}
For any other $\ell$, we have $\ell\in A_k$ if and only if one of the
degrees in $\cI_\ell$ is marked.
For each $i$, the degree sequence $\bd$ contains $n(i)$ degrees that are
equal to $i$, 
and $n_T(i)$ of these will be marked.
Conditioned on $I_\ell=1$, we have $n_T(i)$ degrees $i$ in $\cI_\ell$, 
and thus the probability that at least one of them is marked is at most
\begin{align}\label{tau4}
  \sum_{j=1}^{n_T(i)}\frac{n_T(i)}{n(i)-j+1} \le \frac{n_T(i)^2}{n(i)-n_T(i)+1}.
\end{align}
(An exact expression is easily given, using a hypergeometric distribution,
but we have no need for it.)
Using our assumption \eqref{jm5}
we thus have,
for $k$ and $\ell$ such that $\cI_k\cap\cI_\ell=\emptyset$,
\begin{align}\label{tau5}
  \E [\indic{\ell\in A_k}I_\ell]
= \pi \E [\indic{\ell\in A_k}\mid I_\ell=1]
\le \pi \sum_{i\in \cD(T)} 2\frac{n_T(i)^2}{n(i)}
=\frac{2\gD}{n}.
\end{align}
There are $n(n-2m+1)\le n^2$ such terms, and combining \eqref{ari10},
\eqref{tau2}, \eqref{tau3}, and \eqref{tau5}, we obtain
\begin{align}
  \label{tau6}
\pi \E \sumkn \sum_{\ell\neq k}|I_\ell-J_{\ell k}|
\le \Var W - n\pi + 2(2m-1) \gl\pi 
+ 4n\pi\gD.
\end{align}
Consequently, \eqref{ari9} yields, recalling $\E W = \gl = n\pi$, 
\begin{align}\label{tau7}
  \dtv\bigpar{W,\Po(\gl)}&
\le \pi + \frac{1}{\gl}\bigpar{\Var W - n\pi + 2(2m-1) \gl\pi + 4n\pi\gD}
\notag\\&
\le 4m\pi + \frac{\Var[W]-\E[W]}{\E[W]}
+4\gD.
\end{align}
By \eqref{rny0} and \eqref{jm2}, we have
\begin{align}
  \label{tau8}
m \pi \le m^2 \pi 
\le 8 m^2 \pi \sum_{i\in\cD(T)}\frac{p_i(\bn_T)^2}{p_i(\bn)}
=8\gD.
\end{align}
Furthermore, 
our assumptions \eqref{jm5}--\eqref{jm6} show that \eqref{eq4G} applies
and that for $q=1,2$, as in \eqref{eq11G}, 
\begin{align}\label{tau9}
\E (N_{T}(\mathcal{T}_{\mathbf{n}}))_{q}&   
= n^{-q m+q}\prod_{i\in\cD(T)} n(i)^{q n_{T}(i)}
\cdot \exp \Bigpar{
O\Bigpar{\frac{q^{2}m^{2}}{n } + 
\sum_{i\in \cD(T)} \frac{q^{2}n_{T}(i)^{2}}{n(i)}} }.
\end{align}
We have, by \eqref{tau8}, $m^2/n = m^2\pi/\gl \le 8\gD/\gl$,
which together with \eqref{jm2} shows that the exponent in \eqref{tau9} is
$O(q^2\gD/\gl)=O(\gD/\gl)$.
Hence, recalling the notations \eqref{pin} and \eqref{pip},
\eqref{tau9} can be written as
\begin{align}\label{tau10}
\E (N_{T}(\cTbn))_{q}&   
=(n\pi_{\bp(\bn)}(T))^q e^{O(\gD/\gl)},
\qquad q=1,2.
\end{align}
For $q=1$, this is
\begin{align}\label{tau11}
\gl 
= \E N_{T}(\cTbn)   
=n\pi_{\bp(\bn)}(T)  e^{O(\gD/\gl)}.  
\end{align}
Since we have assumed $\gD<\gl/4$, this implies
\begin{align}\label{jmb1}
  n\pi_{\bp(\bn)}(T) = \Theta(\gl).
\end{align}
Consequently, \eqref{tau10} implies
\begin{align}\label{jmb2}
\gl=\E N_{T}(\cTbn)&   
=n\pi_{\bp(\bn)}(T)+O(\gD),
\\\label{jmb3}
\E (N_{T}(\cTbn))_{2}&   
=(n\pi_{\bp(\bn)}(T))^2 + O(\gl\gD),
\end{align}
and thus, using again the notation $W=N_{T}(\cTbn)$,
\begin{align}\label{jmb4}
  \Var W = \E[(W)_2] + \E W - (\E W)^2
= \E W + O(\gl\gD + \gD^2)
= \E W + O(\gl\gD).
\end{align}
Consequently,
\begin{align}\label{jmb5}
  \frac{\Var W - \E W}{\E W} 
= O(\gD).
\end{align}
Finally, \eqref{tau7} together with \eqref{tau8} and \eqref{jmb5}
implies
\begin{align}\label{jmb6}
  \dtv\bigpar{N_{T}(\cTbn),\Po(\gl)}
= O(\gD),
\end{align}
which is the result \eqref{jm1}.
\end{proof}

\begin{corollary} \label{Cstein}
  Assume \refCond{Condition1}. 
For $\kk \geq 1$, let $T_{\kappa} \in \mathbb{T}$ be such that
as \kktoo,
with $m_{\kappa} \coloneqq |T_{\kappa}|$,
\begin{align}
\label{rny2s}
\frac{m_\kk^2\ENTTk}{|\bn_\kk|}\cdot\!\!\!
\sum_{i\in\cD(T_\kk)}\frac{p_i(\bn_{T_\kk})^2}{p_i(\bn_\kk)}
=o(1).
\end{align}
Then:
\begin{romenumerate}[-20pt]  
\item \label{ReGD1S} 
If\/ $\ENTTk \rightarrow \lambda$, 
for some $\lambda \in [0,\infty)$, then 
\begin{align} \label{regd1s}
N_{T_\kk}(\cTbnk) \dto\Po\bigpar{\gl}.
\end{align} 
\item \label{ReGD2S} 
If\/ $\ENTTk \rightarrow \infty$, then 
\begin{align}\label{regd2s}
\frac{N_{T_\kk}(\cTbnk)- \ENTTk}{\sqrt{\ENTTk}} \dto\N\bigpar{0,1}.
\end{align}
\end{romenumerate}  
\end{corollary}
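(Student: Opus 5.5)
The plan is to apply \refT{Tstein} with $\bn=\bnk$ and $T=T_\kk$, and set $\gl_\kk:=\ENTTk$. The right-hand side of \eqref{jm1} is then exactly $C$ times the left-hand side of \eqref{rny2s}. Hence the hypothesis gives
\begin{align*}
\dtv\bigpar{N_{T_\kk}(\cTbnk),\Po(\gl_\kk)}\to0 .
\end{align*}
So everything reduces to statements about Poisson distributions with parameters $\gl_\kk$.

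In Case \ref{ReGD1S}, suppose $\gl_\kk\to\gl\in[0,\infty)$. Then $\dtv(\Po(\gl_\kk),\Po(\gl))\le|\gl_\kk-\gl|\to0$, using the standard bound that follows, for example, from coupling $\Po(\gl)$ as a thinning of, or an independent sum with, a Poisson variable. By the triangle inequality for $\dtv$,
\begin{align*}
\dtv\bigpar{N_{T_\kk}(\cTbnk),\Po(\gl)}\to0 ,
\end{align*}
which gives \eqref{regd1s}. The case $\gl=0$ needs no special treatment here, which is the advantage over \refT{The1}.

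In Case \ref{ReGD2S}, suppose $\gl_\kk\to\infty$. Let $Z_\kk\sim\Po(\gl_\kk)$. By the classical central limit theorem for Poisson variables, for example via characteristic functions or by writing $Z_\kk$ as a sum of independent Poisson variables,
\begin{align*}
(Z_\kk-\gl_\kk)/\sqrt{\gl_\kk}\dto\N(0,1).
\end{align*}
Total variation distance is invariant under the same affine map applied to both variables. Therefore
\begin{align*}
\dtv\Bigpar{\frac{N_{T_\kk}(\cTbnk)-\gl_\kk}{\sqrt{\gl_\kk}},\frac{Z_\kk-\gl_\kk}{\sqrt{\gl_\kk}}}\to0 .
\end{align*}
Since convergence in total variation to a sequence that converges in distribution implies convergence in distribution to the same limit (test against bounded continuous functions), \eqref{regd2s} follows.

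There is no real obstacle here. The only point to check is that the expression in \eqref{rny2s} coincides with the bound in \eqref{jm1}; it does, because \refT{Tstein} is stated with the same $\gl=\E N_T(\cTbn)$ and the same factor $m^2\gl n\qw\sum_{i\in\cD(T)} p_i(\bn_T)^2/p_i(\bn)$. The second point is that \refT{Tstein} holds for every degree statistic, without any condition. \refCond{Condition1} is therefore not actually needed, beyond the setting being well defined.
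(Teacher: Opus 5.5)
Your proposal is correct and follows essentially the same route as the paper: apply \refT{Tstein} with $\gl_\kk=\ENTTk$ to get $\dtv\bigpar{N_{T_\kk}(\cTbnk),\Po(\gl_\kk)}\to0$ from \eqref{rny2s}. Then conclude via $\Po(\gl_\kk)\dto\Po(\gl)$ in case \ref{ReGD1S} and the central limit theorem for Poisson variables in case \ref{ReGD2S}. You are also right that \refCond{Condition1} plays no role in this argument.
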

\begin{proof}
  Let $\gl_\kk:=\ENTTk$. Then \eqref{jm1} shows, using the assumption
  \eqref{rny2s},  that
  \begin{align}\label{jmb7}
\dtv\bigpar{N_{T_\kk}(\cTbnk),\Po(\gl_\kk)}\to0.    
  \end{align}

In \ref{ReGD1S}, we have $\Po(\gl_\kk)\dto\Po(\gl)$, and \eqref{regd1s}
follows from \eqref{jmb7}.

Similarly,
in \ref{ReGD2S}, \eqref{regd2s} follows from \eqref{jmb7} and the central
limit theorem for the Poisson distributions $\Po(\gl_\kk)$.
\end{proof}

\subsection{The method of Cai and Devroye}\label{SSCai}

Cai and Devroye \cite{Cai2017} study the related problem of fringe trees in
a conditioned Galton--Watson tree.
They note that for any fixed tree $T$, conditioned on the number 
of the fringe trees in the \cGWt\ that
have the same size as $T$, i.e.\ belong to $\bbT_{|T|}$,
these fringe trees are independent and uniformly distributed elements of
$\bbT_{\bn_T}$. 
Hence, the number of these fringe trees that are equal to $T$ has 
conditionally a binomial distribution.

The same also holds for our random trees $\cTbn$ with a given degree statistic
$\bn$ 
if we condition on 
the number of the fringe trees that 
have the same degree statistic as $T$, i.e.\ belong to $\bbT_{\bn_T}$;
then
these fringe trees are independent and uniformly distributed elements of
$\bbT_{\bn_T}$, and consequently,
the number of these fringe trees that are equal to $T$ has 
conditionally a binomial distribution.

too, and thus, conditioned on $N_{\bn_T}(\cTbn)$ (defined in \eqref{eq2s}), 
we have
\begin{align}\label{luc1}
N_T(\cTbn) \simx \Bi\bigpar{N_{\bn_T}(\cTbn),1/|\bbT_{\bn_T}|}.  
\end{align}

We can thus use the method by Cai and Devroye \cite{Cai2017},
who showed the following general result.
\begin{lemma}[{\cite[Lemma 2.10]{Cai2017}}]
Let $X$ and $M$ be non-negative integer-valued random variables.  
If conditioned on the event $M=m$, $X$ is binomial $(m,p)$, then
\begin{align}\label{luc2}
  \dtv\bigpar{X,\Po(\E X)}
\le p + \sqrt{p\frac{\Var M}{\E M}}.
\end{align}
\end{lemma}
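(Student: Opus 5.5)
We prove \eqref{luc2} by inserting an intermediate mixed Poisson law and using the triangle inequality. Write $\mu:=\E M$, so that $\E X=p\mu$. Let $Y$ be a random variable that, conditionally on $M$, has the distribution $\Po(pM)$. Then
\begin{align*}
\dtv\bigpar{X,\Po(\E X)}\le \dtv(X,Y)+\dtv\bigpar{Y,\Po(p\mu)},
\end{align*}
and we bound the two terms separately. This is the standard route for binomial mixtures.

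For the first term, we condition on $M=m$. Since total variation distance is jointly convex under mixtures, $\dtv(X,Y)\le \E\,\dtv\bigpar{\Bi(M,p),\Po(pM)}$. The classical Le Cam / Barbour--Hall bound (e.g.\ from \cite{SJI}) gives $\dtv(\Bi(m,p),\Po(mp))\le (1-e^{-mp})p\le p$ for every $m$. Hence the first term is at most $p$.

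For the second term, we compare two Poisson mixtures with different parameters. Using the coupling of $\Po(a)$ and $\Po(b)$ in which the larger is the smaller plus an independent $\Po(|a-b|)$, we get $\dtv(\Po(a),\Po(b))\le 1-e^{-|a-b|}\le |a-b|$. This is too weak, however: it yields $p\,\E|M-\mu|\le p\sqrt{\Var M}$, whereas the claim has $\sqrt{p\Var M/\mu}$, which is smaller when $p\mu$ is large. We therefore use the sharper estimate
\begin{align*}
\dtv\bigpar{\Po(a),\Po(b)}\le \min\Bigpar{|a-b|,\ \frac{|a-b|}{\sqrt{\max(a,b)}}}\le \frac{|a-b|}{\sqrt{b}}.
\end{align*}
This follows from a Stein-method bound, or from the Hellinger distance via $\dtv\le \sqrt2\,H$ and $H^2(\Po(a),\Po(b))=1-e^{-(\sqrt a-\sqrt b)^2/2}\le(\sqrt a-\sqrt b)^2/2$, together with $|\sqrt a-\sqrt b|\le |a-b|/\sqrt b$.

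Applying this conditionally on $M$ with $a=pM$ and $b=p\mu$, and then Cauchy--Schwarz, gives
\begin{align*}
\dtv\bigpar{Y,\Po(p\mu)}\le \E\frac{p|M-\mu|}{\sqrt{p\mu}}\le \sqrt{\frac{p}{\mu}}\sqrt{\Var M}=\sqrt{p\frac{\Var M}{\E M}}.
\end{align*}
Adding the two bounds gives \eqref{luc2}.

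The main obstacle is the second step: we need the Poisson--Poisson comparison with the factor $1/\sqrt{b}$ and an absolute constant $1$. The Hellinger route above produces a factor $\sqrt2$ in front of $|\sqrt a-\sqrt b|$, so it does not by itself give constant $1$. If that constant is too big, we instead use the known bound $\dtv(\Po(a),\Po(b))\le |a-b|\min(1,\sqrt{2/(e\,\max(a,b))})$, and then bound $\sqrt{2/e}<1$ after checking the orientation of the maximum. Any constant issues are confined to this one classical estimate.
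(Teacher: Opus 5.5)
The paper gives no proof of this lemma. It is imported verbatim as \cite[Lemma 2.10]{Cai2017} and only used in \eqref{luc3}, so there is no in-paper argument to compare with. Your argument is a correct, self-contained proof. The two steps are:
\begin{itemize}
\item Mixing over $M$ gives $\dtv(X,Y)\le\E\,\dtv\bigpar{\Bi(M,p),\Po(pM)}\le \E\bigsqpar{p(1-e^{-pM})}\le p$, by the Barbour--Hall bound.
\item Convexity of $\dtv$ under mixtures, followed by Cauchy--Schwarz, turns any Poisson--Poisson estimate of the form $|a-b|/\sqrt b$ with $b=p\E M$ into exactly $\sqrt{p\Var M/\E M}$.
\end{itemize}
You should state the degenerate cases explicitly. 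If $p\E M=0$, then $X=0$ a.s.\ and the left-hand side is $0$. If $\Var M=\infty$, there is nothing to prove. Otherwise division by $\sqrt{p\E M}$ is legitimate.

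Your final paragraph contains an error: the Hellinger route does \emph{not} lose a factor $\sqrt2$. With your normalization $H^2=1-\sum_k\sqrt{P(k)Q(k)}$, you have $\dtv\le\sqrt2\,H$ and $H^2=1-e^{-(\sqrt a-\sqrt b)^2/2}\le(\sqrt a-\sqrt b)^2/2$. Hence
\[
\dtv\bigpar{\Po(a),\Po(b)}\le\sqrt2\cdot\frac{|\sqrt a-\sqrt b|}{\sqrt2}=|\sqrt a-\sqrt b|=\frac{|a-b|}{\sqrt a+\sqrt b}\le\frac{|a-b|}{\sqrt b},
\]
with constant exactly $1$; the $\sqrt2$ cancels. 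This also holds for $a=0$. So your key estimate is already fully proved by this elementary computation. You can delete the fallback to the $\sqrt{2/e}$ bound, and the ``orientation'' check is moot anyway, since $\max(a,b)\ge b$ always.

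For comparison, the same bound also follows from a single Stein--Chen computation with $\lambda=p\E M$, conditioning on $M$. That gives $\E[\lambda g(X+1)-Xg(X)]=\E[pMg(X+1)-Xg(X)]+p\,\E[(\E M-M)g(X+1)]$, where the first term is at most $p^2\E M\,\|\Delta g\|$ and the second at most $p\|g\|\sqrt{\Var M}$. That route is shorter but depends on the sharp constant in the bound for $\|g\|$. Your two-step decomposition separates the two error sources and, via Hellinger, needs no constants from the Stein solution.
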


Combining \eqref{luc1} and \eqref{luc2}, we find, letting again
$\gl := \ENTT$,
\begin{align}\label{luc3}
\dtv\bigpar{N_T(\cTbn),\Po(\gl)}
\le \frac{1}{|\bbT_{\bn_T}|}
+ \sqrt{\frac{\Var [N_{\bn_T}(\cTbn)]}{|\bbT_{\bn_T}|\cdot\E[N_{\bn_T}(\cTbn)]}}.
\end{align}
Here the mean and variance of $N_{\bn_T}(\cTbn)$ enter. These can easily be
expressed using the mean and variance of $N_T(\cTbn)$ as follows.
\begin{lemma} \label{LCorollary1}
Let $\mathbf{n}$ be a degree statistic and let $\mathcal{T}_{\mathbf{n}} \simx {\rm Unif}(\mathbb{T}_{\mathbf{n}})$. For $T \in \mathbb{T}$,
\begin{align}
\mathbb{E} N_{\mathbf{n}_{T}}(\mathcal{T}_{\mathbf{n}}) & = |\mathbb{T}_{\mathbf{n}_{T}}|  \cdot  \mathbb{E} N_{T}(\mathcal{T}_{\mathbf{n}}),  \label{eq5g} \\
{\rm Var}(N_{\mathbf{n}_{T}}(\mathcal{T}_{\mathbf{n}})) & 
=  |\mathbb{T}_{\mathbf{n}_{T}}|^{2}  \cdot  {\rm Var}(N_{T}(\mathcal{T}_{\mathbf{n}})) - |\mathbb{T}_{\mathbf{n}_{T}}| \cdot (|\mathbb{T}_{\mathbf{n}_{T}}|-1) \cdot  \mathbb{E} N_{T}(\mathcal{T}_{\mathbf{n}})
\notag\\&
=|\bbT_{\bn_T}|^2\cdot\bigpar{\Var N_T(\cTbn)-\E N_T(\cTbn)}
+ |\bbT_{\bn_T}|\cdot\E N_T(\cTbn)
.\label{eq6g}
\end{align} 
\end{lemma}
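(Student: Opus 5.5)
The plan is to use the decomposition \eqref{eq2s}, $N_{\bn_T}(\cTbn)=\sum_{T'\in\bbT_{\bn_T}}N_{T'}(\cTbn)$, together with the fact that the joint law of the counts $N_{T'}(\cTbn)$ is the same for all $T'\in\bbT_{\bn_T}$. Write $K:=|\bbT_{\bn_T}|$.

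For the mean, \refL{LE1} gives $\E N_{T'}(\cTbn)$ via \eqref{le1a}, and this depends on $T'$ only through $|T'|$ and the degree statistic $\bn_{T'}=\bn_T$. Hence all $K$ terms have the same expectation $\E N_T(\cTbn)$, and summing gives \eqref{eq5g}.

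For the variance, expand
\begin{align*}
\E\bigsqpar{N_{\bn_T}(\cTbn)^2}=\sum_{T'}\E\bigsqpar{N_{T'}(\cTbn)^2}+\sum_{T'\neq T''}\E\bigsqpar{N_{T'}(\cTbn)N_{T''}(\cTbn)}.
\end{align*}
The key step is to show that $\E[N_{T'}N_{T''}]$ equals $\E[(N_T)_2]$ for all $T'\ne T''$, and also for $T'=T''$ when the diagonal is counted as $\E[(N_{T'})_2]$. I would prove this with the cyclic representation \eqref{b1}. Using \eqref{b1}, $\E[N_{T'}N_{T''}]-\gd_{T'T''}\E N_{T'}$ is a sum over ordered pairs of distinct positions $j\ne j'$ of the probability that $\bd_{T'}$ occurs at $j$ and $\bd_{T''}$ occurs at $j'$ in the uniformly random $\bd\in\bbB_\bn$.

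Two occurrences of tree degree sequences of the same size cannot overlap partially. Such an overlap would make one fringe tree contain the root of the other without containing the whole of it, since fringe trees are nested or disjoint. Two equal-size occurrences also cannot be nested unless they coincide. So only disjoint windows contribute. For disjoint windows, the probability is $\prod_i (n(i))_{2n_T(i)}/(n)_{2m}$ by exchangeability of the uniform permutation \eqref{ctd2}, and this does not depend on $T',T''$. The number of admissible disjoint position pairs is also the same for every pair of trees. This is the content of \cite[Lemma 3.3]{SJ378}, which gives \eqref{eq4G} for $q=2$ with a value independent of the particular trees. The only point to check is that the cited proof uses nothing but the degree statistics, so it applies verbatim to mixed pairs.

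Granting this, let $F:=\E[(N_T)_2]$ and $\mu:=\E N_T$. Then
\begin{align*}
\E\bigsqpar{N_{\bn_T}(\cTbn)^2}=K^2F+K\mu,
\end{align*}
so that
\begin{align*}
\Var N_{\bn_T}(\cTbn)=K^2F+K\mu-K^2\mu^2.
\end{align*}
Substituting $F=\Var N_T+\mu^2-\mu$ gives
\begin{align*}
K^2(\Var N_T-\mu)+K\mu=K^2\Var N_T-K(K-1)\mu,
\end{align*}
which is \eqref{eq6g}. The main obstacle is the mixed second-moment identity, and in particular the justification that overlapping windows contribute nothing for distinct trees of equal size. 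I would handle it by the depth-first fringe-tree characterization recalled before \eqref{ari4}.
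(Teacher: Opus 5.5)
Your proposal is correct and follows essentially the same route as the paper. The paper splits $N_{\bn_T}(\cTbn)$ via \eqref{eq2s}, uses that equal-size fringe trees are disjoint so that each mixed moment $\E[N_{T'}(\cTbn)N_{T''}(\cTbn)]$ with $T'\neq T''$ equals $\E[(N_T(\cTbn))_2]$, and then does the same algebra as in \eqref{eq9g}. The only difference is that the paper gets the equality of these second moments by citing the arguments of \cite[Lemmas 3.1--3.3]{SJ378}, whereas you check it directly from \eqref{b1} and exchangeability, which is a valid self-contained justification.
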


\begin{proof}
\cite[Lemma 3.1]{SJ378} shows
that $\E N_{T^{\prime}}(\mathcal{T}_{\mathbf{n}})$ depends only on $\bn$ and
the degree statistic of $T$, and thus,
for any $T^{\prime} \in \mathbb{T}_{\mathbf{n}_{T}}$,
$\E N_{T^{\prime}}(\mathcal{T}_{\mathbf{n}}) = \E N_{T}(\mathcal{T}_{\mathbf{n}})$.
Hence, \eqref{eq5g} follows from \eqref{eq2s}.
Furthermore,  \eqref{eq2s} yields also
\begin{align} \label{eq8g}
(N_{\mathbf{n}_{T}}(\mathcal{T}_{\mathbf{n}}))^{2} = \sum_{T^{\prime} \in \mathbb{T}_{\mathbf{n}_{T}}} (N_{T^{\prime}}(\mathcal{T}_{\mathbf{n}}))^{2} + \sum_{T^{\prime} \in \mathbb{T}_{\mathbf{n}_{T}}} \sum_{T^{\prime \prime} \in \mathbb{T}_{\mathbf{n}_{T}}} N_{T^{\prime}}(\mathcal{T}_{\mathbf{n}}) N_{T^{\prime \prime}}(\mathcal{T}_{\mathbf{n}}) \mathbf{1}_{\{ T^{\prime}\neq T^{\prime \prime}\}}.
\end{align}
If
$T^{\prime},T^{\prime \prime} \in \mathbb{T}_{\mathbf{n}_{T}}$
such that $T^{\prime}\neq T^{\prime \prime}$, 
then two different fringe trees in $\cT_n$ that are isomorphic to $T'$
have to be disjoint, and the same holds for 
two fringe trees isomorphic to $T'$ and $T''$, respectively.
It follows easily by the arguments in the proof of \cite[Lemma 3.1]{SJ378},
see also \cite[Lemmas 3.2 and~3.3]{SJ378} for more general and more detailed
results, 
that
$\E [(N_{T^{\prime}}(\mathcal{T}_{\mathbf{n}}))^{2}]
= \E[(N_{T}(\mathcal{T}_{\mathbf{n}}))^{2}]$ and 
$\E[N_{T^{\prime}}(\mathcal{T}_{\mathbf{n}})
N_{T^{\prime\prime}}(\mathcal{T}_{\mathbf{n}})] = \E [(N_{T}(\mathcal{T}_{\mathbf{n}}))_{2}]$. 
Hence, it follows
from \eqref{eq8g} that
\begin{align} \label{eq9g}
\E\bigsqpar{ N_{\mathbf{n}_{T}}(\mathcal{T}_{\mathbf{n}})^{2} }
= |\mathbb{T}_{\mathbf{n}_{T}}|
  \mathbb{E}\bigsqpar{N_{T}(\mathcal{T}_{\mathbf{n}})^{2}} +
  |\mathbb{T}_{\mathbf{n}_{T}}| \cdot (|\mathbb{T}_{\mathbf{n}_{T}}|-1)
  \cdot
  \bigpar{\mathbb{E}\bigsqpar{N_{T}(\mathcal{T}_{\mathbf{n}})^{2}}
-\E \bigsqpar{N_T(\cT_\bn)}} ,
\end{align}
and \eqref{eq6g} follows. 
\end{proof}

Using \eqref{luc3} and \refL{LCorollary1}, we obtain
\begin{align}\label{luc4}
\dtv\bigpar{N_T(\cTbn),\Po(\gl)}&
\le \frac{1}{|\bbT_{\bn_T}|}
+ \sqrt{\frac{\Var [N_{T}(\cTbn)]-\E N_T(\cTbn)}{\ENTT}+\frac{1}{|\bbT_{\bn_T}|}}
\notag\\&
\le
\sqrt{\frac{\Var [N_{T}(\cTbn)]-\E N_T(\cTbn)}{\ENTT}}
+\frac{2}{\sqrt{|\bbT_{\bn_T}|}}
.\end{align}
Using \eqref{jmb5} from the proof above, we obtain from \eqref{luc4}
\begin{align}\label{luc5}
\dtv\bigpar{N_T(\cTbn),\Po(\gl)}&
=O\bigpar{\gD\qq+|\bbT_{\bn_T}|\qqw}
.\end{align}
(The proof of \eqref{jmb5} assumes $\gD<\frac{1}{4}\gl$, but \eqref{luc5}
holds also for $\gD\ge\frac14\gl$ by \eqref{jm3}.)
This is weaker than \eqref{jm1}, but enough to prove \refC{Cstein} provided
$|\bbT_{\bn_{T_\kk}}|\to\infty$, which can be shown to hold provided
$|T_\kk|\to\infty$ except for the two cases when $T_\kk$ is a path or a star.
(In these exceptional cases,  $|\bbT_{\bn_{T_\kk}}|=1$, and the method is
useless.)

For our purposes, the method by Cai and Devroye \cite{Cai2017} thus yields 
a simpler proof of part of our main results, but there are exceptional cases
(including the case when $T_\kk=T$ is fixed)
where it does not yield convergence in distribution.
Moreover, the bound \eqref{luc5} for the total variation distance is
inferior to the bound \eqref{jm1} obtained by Stein's method.
Nevertheless, we include the method here since it is simple, and it might be
useful in other situations too, and we find it interesting to compare the
result of it with the results from other methods.

\subsection{Stein’s method with exchangeable pairs}\label{SSstein1}

There are many versions of Stein's method, both for Poisson approximation
and normal approximation, see e.g.\ \cite{Chen2011} and \cite{Ross2011}.
We have in \refSS{SSstein2} used Stein's method with couplings.
Other popular versions use exchangeable pairs.
Cai and Devroye used, in a preliminary version \cite{Cai2017arxiv}
of the paper \cite{Cai2017} used above,
a version of
Stein’s method with exchangeable pairs for Poisson approximation,
which was
developed by \citet{Diaconis2005} and
further discussed in Ross \cite[Section 4.4]{Ross2011}. 
Recall that a pair $(X,X^{\prime})$
of random variables is called an exchangeable pair if  
$(X,X^{\prime}) \eqd (X^{\prime},X)$. 
We sketch one way to construct an exchangeable pair in our setting.

Let $X:=N_T(\cTbn)$, where $T$ is a given tree and $\bn$ is a given degree
sequence.
Let $V$ be a uniformly random vertex of $\cTbn$,
and let $\cT'$ be a uniformly random tree in $\bbT_{\bn_T}$, independent
of $\cTbn$ and $V$.
If the fringe tree $(\cTbn)_V$ has the same degree statistic $\bn_T$ as $T$,
then define $\cTbn'$ as the tree $\cTbn$ with this fringe tree
$(\cTbn)_V$ replaced by $\cT'$; otherwise, let $\cTbn'=\cTbn$.
Note that $\cTbn'$ has the same degree statistic $\bn$ as $\cTbn$.
Moreover, it is easy to see that $(\cTbn,\cTbn')$ is an exchangeable pair of
random trees. 
Hence,
$(N_T(\cTbn),N_T(\cTbn'))$ is an exchangeable pair of random variables.

Using this exchangeable pair and
\cite[Theorem 4.5 and equation (4.14)]{Ross2011}, 
see also \cite[Lemma~2 and Proposition~3]{Diaconis2005},
it can be shown after lengthy calculations that
\begin{align}\label{theos}
d_{\rm TV}\bigpar{N_{T}(\mathcal{T}_{\mathbf{n}}), {\rm Po}(\mathbb{E}[N_{T}(\mathcal{T}_{\mathbf{n}})])} 
& \leq \frac{1}{|\mathbb{T}_{\mathbf{n}_{T}}|} 
+\left(\frac{{\rm Var}(N_{T}(\mathcal{T}_{\mathbf{n}})) 
 -\mathbb{E} N_{T}(\mathcal{T}_{\mathbf{n}})}
 {\mathbb{E}N_{T}(\mathcal{T}_{\mathbf{n}})} 
 + \frac{1}{|\mathbb{T}_{\mathbf{n}_{T}}|} \right)^{1/2}
\notag\\&
\le  \frac{2}{|\mathbb{T}_{\mathbf{n}_{T}}|\qq} 
+\left(\frac{{\rm Var}(N_{T}(\mathcal{T}_{\mathbf{n}})) 
 -\mathbb{E} N_{T}(\mathcal{T}_{\mathbf{n}})}
 {\mathbb{E}N_{T}(\mathcal{T}_{\mathbf{n}})}  \right)^{1/2}
.\end{align}
Note that this is exactly the same estimate as \eqref{luc4}, obtained by a
much simpler method. We therefore find (as apparently the authors of
\cite{Cai2017} did for their problem) that in our case, this version of
Stein's method works, but it is not really useful since the simpler method
in \refSS{SSCai} yields the same result in a simpler way.
We therefore omit the calculations leading to \eqref{theos}.
(This does not exclude the possibility that better results might be obtained
in the future
by this method using some other exchangeable pair, or the same pair and
different estimates in the calculations.)

\section{Fringe trees with a given degree statistic 
(depending on  $\kk$)}\label{Sstatistic}

For $T\in \mathbb{T}$ and a degree statistic $\bn$, 
recall that we have defined $N_{\bn}(T)$ in \eqref{eq2s} 
to be the number of fringe trees of $T$ with degree statistic $\bn$. 

\begin{lemma} \label{lemmaMoment}
Let $\mathbf{n}$ and $\bm$ be two degree statistics and let
$\mathcal{T}_{\mathbf{n}} \simx {\rm Unif}(\mathbb{T}_{\mathbf{n}})$. 
Then, for every  $q \in \mathbb{N}$, 
we have
\begin{align} \label{sw2}
\E[(N_{\bm}(\mathcal{T}_{\mathbf{n}}))_{q}] = |\mathbb{T}_{\bm}|^{q} \frac{|\mathbf{n}|}{(|\mathbf{n}|)_{q|\bm|-q+1}} \prod_{i\geq 0} (n(i))_{qm(i)},
\end{align}
where we
(in the case $|\mathbf{n}|< q |\bm| -q +1$)
interpret $\frac00=0$.
Equivalently, for any tree $T$ with degree statistic $\bm$, we have
\begin{align} \label{sw2t}
\E[(N_{\bm}(\mathcal{T}_{\mathbf{n}}))_{q}] = |\mathbb{T}_{\bm}|^{q} 
\E[(N_{T}(\mathcal{T}_{\mathbf{n}}))_{q}].
\end{align}

\end{lemma}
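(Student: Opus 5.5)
Since $(N_\bm)_q$ counts ordered $q$-tuples of distinct vertices, I would expand it as
\begin{align*}
(N_{\bm}(\cTbn))_q=\sum_{(T_1,\dots,T_q)\in\bbT_\bm^q}\ \sum_{(v_1,\dots,v_q)}\prod_{j=1}^q\indicz{(\cTbn)_{v_j}=T_j},
\end{align*}
where the inner sum runs over ordered $q$-tuples of distinct vertices of $\cTbn$. Two distinct fringe trees with the same degree statistic $\bm$ (hence the same size $|\bm|$) are necessarily disjoint, since nested fringe trees have different sizes. Thus the expectation of each inner sum is the expected number of ordered $q$-tuples of \emph{disjoint} fringe trees equal to $T_1,\dots,T_q$ respectively.

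The key step is that this expectation does not depend on the choice of $(T_1,\dots,T_q)\in\bbT_\bm^q$. I would obtain it from \cite[Lemma 3.3]{SJ378}, the general formula for mixed factorial moments of fringe-tree counts, which underlies \eqref{eq4G}. That formula depends only on the degree statistics of the trees involved, and for $q$ disjoint trees each of degree statistic $\bm$ it equals
\begin{align*}
\frac{|\bn|}{(|\bn|)_{q|\bm|-q+1}}\prod_{i\ge0}(n(i))_{qm(i)} .
\end{align*}
If I wanted a self-contained argument, I would use the cyclic representation \eqref{b1}: delete the $q$ disjoint blocks $\bd_{T_j}$ from the cyclic sequence, replacing each by a single marker. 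One then counts the cyclic arrangements of the remaining $n(i)-qm(i)$ degrees together with $q$ markers, after which dividing by $|\bbB_\bn|$ yields the displayed expression. This bijective count plainly involves only $\bm$, not the shapes of the $T_j$.

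Summing over the $|\bbT_\bm|^q$ choices of $(T_1,\dots,T_q)$ then gives \eqref{sw2}. Comparing with \eqref{eq4G} (with $T$ of degree statistic $\bm$) gives \eqref{sw2t}; the case $T_1=\dots=T_q=T$ is exactly $\E[(N_T(\cTbn))_q]$. In the degenerate case $|\bn|<q|\bm|-q+1$, no $q$ disjoint copies fit, so both sides vanish with the convention $\frac00=0$. The falling factorials also vanish appropriately when some $qm(i)>n(i)$.

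The main obstacle is the independence of the shapes. It is essential that the merged-block counting in \cite[Lemma 3.3]{SJ378} treats distinct but degree-equivalent trees identically. In particular, the derangement of blocks must not depend on whether the $T_j$ coincide, and one must check that there is no overcounting from the cyclic-shift $n$-to-$1$ map $\Psi$ when blocks are distinct trees.
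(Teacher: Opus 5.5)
Your proposal is correct and follows the paper's proof. As there, you write $(N_{\bm}(\cTbn))_q=\sum_{T_1,\dots,T_q\in\bbT_\bm}S(T_1,\dots,T_q)$, using that distinct fringe trees of equal size are disjoint. You then take $\E S(T_1,\dots,T_q)$ from the counting formula in \cite{SJ378} (the paper cites its equation (3.13)), which depends only on $\bm$, sum over the $|\bbT_\bm|^q$ choices, and read off \eqref{sw2t} from the case $T_1=\dots=T_q=T$.

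Your cyclic marker count is a valid self-contained derivation of that formula. Fixing the first block at position $1$ and replacing the other $q-1$ blocks by labelled markers gives $|\bn|\,(|\bn|-q|\bm|+q-1)!/\prod_i(n(i)-qm(i))!$ marked sequences. Because the markers are labelled, the worry in your last paragraph about coinciding $T_j$ or overcounting via $\Psi$ does not arise.
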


\begin{proof}
For $q \in \mathbb{N}$, $(N_{\bm}(\mathcal{T}_{\mathbf{n}}))_{q}$ is the
number of sequences of $q$ distinct fringe subtrees of
$\mathcal{T}_{\mathbf{n}}$ that belong to
$\mathbb{T}_{\bm}$. 
Note that these fringe trees necessarily are disjoint (since they have the
same size). In particular, there are no such sequences 
if  $q m(i)> n(i)$ for some $i \geq 0$.
We may thus assume
$n(i)\ge qm(i)$ for every $i\ge0$, since otherwise \eqref{sw2} holds
trivially as $0=0$.
As a consequence, $|\bn|\geq|\bm|$.

Given such a sequence of $q$ fringe subtrees,
we say that these fringe subtrees are \emph{marked}. 
For a sequence $T_{1}, \dots, T_{q} \in
\mathbb{T}_{\bm}$, let $S(T_{1}, \dots, T_{q})$ be the number
of sequences of $q$ disjoint fringe trees in $\cTbn$ 
that are  copies of $T_{1}, \dots, T_{q}$, respectively. 
We thus have 
\begin{align}\label{sw3}
(N_{\bm}(\mathcal{T}_{\mathbf{n}}))_{q} = \sum_{T_{1}, \dots, T_{q} \in \mathbb{T}_{\bm}} S(T_{1}, \dots, T_{q}).
\end{align}
It is shown in \cite[equation (3.13)]{SJ378}
(in greater generality and with slightly different notation, see
(3.11)--(3.12) there) 
that for any sequence $T_1,\dots,T_q\in\bbT_\bn$, 
the number of trees in $\mathbb{T}_{\mathbf{n}}$ with marked sequences of
disjoint fringe subtrees that are equal (i.e.\ isomorphic) to $T_1,\dots,T_q$ is
given by
\begin{align} \label{sw4}
 \frac{(|\mathbf{n}| - q|\bm|+q-1)!}{\prod_{i \geq 0}(n(i) - q m(i))!}.
\end{align}
\noindent By dividing with $|\mathbb{T}_{\mathbf{n}}|$, which is given by
\eqref{sw1}, 
we find
(cf.\ the special case in \eqref{eq4G} where $T_1=\dots=T_q$)
\begin{align} \label{sw5}
\E[S(T_{1}, \dots, T_{q})] 
= \frac{|\bn|}{(|\mathbf{n}|)_{q|\bm|-q+1}} \prod_{i \geq 0}(n(i))_{q m(i)},
\end{align}
\noindent and our claim \eqref{sw2} follows from \eqref{sw3}, 
since there are $|\bbT_\bm|$ choices for each $T_i$.

We obtain \eqref{sw2t} from \eqref{sw2} and \eqref{eq4G}, 
or by \eqref{sw3} and noting that
we have shown that $\E S(T_1,\dots,T_q)$ does not depend on the choice of
$T_1,\dots,T_q$, and thus 
\begin{align}\label{sw6}
\E S(T_1,\dots,T_q)=\E S(T,\dots,T)
=\E[(N_{T}(\mathcal{T}_{\mathbf{n}}))_{q}].
\end{align}
\end{proof}

For a probability distribution $\mathbf{p} = (p_{i})_{i \geq  0}$ on $\mathbb{N}_{0}$, and a given set $\sT \subset \mathbb{T}$ of trees, we let
\begin{align}\label{pip2}
\pi_{\mathbf{p}}(\sT) \coloneqq  
\P(\cT_\bp\in\sT)
=\sum_{T \in \sT} \pi_{\mathbf{p}}(T)
.\end{align}
Note that for a given degree statistic $\bm$, the probability $\pi_\bp(T)$
defined in \eqref{pip} is the same for all $T\in\bbT_\bm$.
We denote it by
\begin{align}\label{pip3}
  \pi_\bp(\bm):=\prod_{i\ge0}p_i^{m(i)}.
\end{align}
Hence, 
\begin{align} \label{pip4}
\pi_\bp(\bbT_\bm)=|\bbT_\bm|\pi_\bp(\bm).
\end{align}

We have the following analogue of \refT{The1}, where we also let
$\cD(\bm):=\set{i:m(i)>0}$. 
\begin{theorem} \label{ThPo2} 
Assume \refCond{Condition1}. Let $\bn_\kk$ and $\bm_{\kappa}$, $\kk\ge1$, be
degree statistics such that 
\begin{align}
\label{rny2m}
&|\bm_\kk|^2\pi_{\bp(\bn_{\kk})}(\bbT_\bmk)\sum_{i\in\cD(\bm_\kk)}
\frac{p_i(\bm_{\kk})^2}{p_i(\bn_\kk)}
=o(1).
\end{align}
Then, as $\kk \rightarrow \infty$:
\begin{romenumerate}[-20pt]  
\item \label{ReGD1m} 
If\/ $|\bn_{\kk}|\pi_{\mathbf{p}(\bn_{\kk})}(\bbT_\bmk)\to\lambda$
for some $\lambda \in (0,\infty)$, then 
\begin{align} \label{regd1m}
N_{\bm_\kk}(\cT_\bnk) \dto\Po\bigpar{\gl}
\end{align} 
with convergence of all moments.

\item \label{ReGD2m} 
If\/ $|\bn_{\kk}| \pi_{\mathbf{p}(\bn_{\kk})}(\bbT_\bmk) \rightarrow \infty$,
then 
\begin{align}\label{regd2m}
\frac{N_{\bm_\kk}(\cTbnk)- |\bn_{\kk}| \pi_{\mathbf{p}(\bn_{\kk})}(\bbT_\bmk)}{\sqrt{|\bn_{\kk}| \pi_{\mathbf{p}(\bn_{\kk})}(\bbT_\bmk)}} \dto\N\bigpar{0,1},
\end{align}
\end{romenumerate}  
with convergence of mean and variance.

Thus, in both cases,
\begin{align}\label{regd3m}
\E\bigsqpar{N_{\bm_\kk}(\cTbnk)}
\sim
\Var\bigsqpar{N_{\bm_\kk}(\cTbnk)}
\sim
|\bn_{\kk}| \pi_{\mathbf{p}(\bn_{\kk})}(\bbT_\bmk).   
\end{align}
\end{theorem}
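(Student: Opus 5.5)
The plan is to mimic the proof of \refT{The1}, now using the factorial moment formula \eqref{sw2} of \refL{lemmaMoment} in place of \eqref{eq4G}. Write $\gl_\kk:=|\bn_\kk|\pi_{\bp(\bn_\kk)}(\bbT_\bmk)=|\bn_\kk|\,|\bbT_\bmk|\,\pi_{\bp(\bn_\kk)}(\bmk)$ by \eqref{pip4}, and $m_\kk:=|\bm_\kk|$. The first step is to extract the two smallness estimates from \eqref{rny2m}. Since $|\bbT_\bmk|\ge1$, \eqref{rny2m} implies the analogue of \eqref{rny2} with $\pi_{\bp(\bn_\kk)}(T_\kk)$ replaced by $\pi_{\bp(\bn_\kk)}(\bmk)$. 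Indeed, \eqref{rny2m} is stronger by the factor $|\bbT_\bmk|$. By \eqref{rny0}, applied to $\bmk$, we then also get $m_\kk^2\pi_{\bp(\bn_\kk)}(\bbT_\bmk)=o(1)$.

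Next, take $q_\kk\in\bbN$ with $q_\kk=O(\gl_\kk\qq)$. Then $q_\kk^2\le C\gl_\kk$, and exactly as in \eqref{N1e}--\eqref{N1f}:
\begin{align*}
\frac{q_\kk^2m_\kk^2}{|\bn_\kk|}\le Cm_\kk^2\pi_{\bp(\bn_\kk)}(\bbT_\bmk)=o(1),\qquad
\sum_{i\in\cD(\bmk)}\frac{q_\kk^2m_\kk(i)^2}{n_\kk(i)}\le Cm_\kk^2\pi_{\bp(\bn_\kk)}(\bbT_\bmk)\sum_{i\in\cD(\bmk)}\frac{p_i(\bmk)^2}{p_i(\bn_\kk)}=o(1).
\end{align*}
It is essential here that $\gl_\kk$ contains the factor $|\bbT_\bmk|$, which is exactly what \eqref{rny2m} controls.

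With these estimates, \eqref{sw2} together with \eqref{eq5Gb} gives, for large $\kk$,
\begin{align*}
\E(N_\bmk(\cTbnk))_{q_\kk}=|\bbT_\bmk|^{q_\kk}|\bn_\kk|^{q_\kk}\pi_{\bp(\bn_\kk)}(\bmk)^{q_\kk}e^{o(1)}=\gl_\kk^{q_\kk}e^{o(1)},
\end{align*}
which is the analogue of \eqref{eq11G}. For fixed $q$, one can sharpen the bound as in \eqref{N1e2}--\eqref{N1f2}: dividing by $\gl_\kk$ shows that the exponent is $o(1/\gl_\kk)$, so $\E(N_\bmk(\cTbnk))_q=\gl_\kk^q+o(\gl_\kk^{q-1})$.

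The conclusions then follow as before. In case \ref{ReGD1m}, fixed $q$ is allowed, and the method of moments gives Poisson convergence with all moments. In case \ref{ReGD2m}, the Gao--Wormald theorem with $\mu_\kk=\gs_\kk^2=\gl_\kk$ gives \eqref{regd2m}. The cases $q=1,2$ of the sharpened estimate give the mean and variance asymptotics \eqref{regd3m}, just as in the proof of \refT{The1}.

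The main obstacle is the bookkeeping around the factor $|\bbT_\bmk|$, which may be exponentially large. The estimates must be set up so that this factor is absorbed into $\gl_\kk$ and the error exponents, rather than into $\pi_{\bp(\bn_\kk)}(\bmk)$ alone. One also has to check, via the two displayed bounds, that $q_\kk m_\kk\le|\bn_\kk|/2$ and $q_\kk m_\kk(i)\le n_\kk(i)/2$, so that \eqref{eq5Gb} applies.
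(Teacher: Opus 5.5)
Your proposal is correct and follows essentially the same route as the paper. Your $\gl_\kk$ is the paper's $\hgl_\kk=|\bbT_\bmk|\,|\bn_\kk|\pi_{\bp(\bn_\kk)}(\bmk)$, and the key step of taking $q_\kk=O(\hgl_\kk^{1/2})$, so that \eqref{N1em}--\eqref{N1fm} hold and the factor $|\bbT_\bmk|^{q_\kk}$ is absorbed, is exactly what the paper does; the only cosmetic difference is that you expand \eqref{sw2} directly with \eqref{eq5Gb}, whereas the paper reduces via \eqref{sw2t} to the estimate \eqref{eq11G} for a representative tree $T_\kk\in\bbT_\bmk$.
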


\begin{proof}
We argue as in the proof of \refT{The1}, with minor differences.
Note that \eqref{rny2m} and \eqref{ari3} imply, similarly to
\eqref{rny0}--\eqref{rny1},
\begin{align}\label{rny1m}
 &|\bm_{\kk}|^{2}  \pi_{\mathbf{p}(\bn_{\kk})}(\bbT_\bmk)=o(1).
\end{align}
Choose $T_\kk\in\bbT_{\bm_\kk}$, and let as in \eqref{glkk}
$\gl_\kk:=|\bn_{\kk}|\pi_{\mathbf{p}(\bn_{\kk})}(T_{\kappa})
=|\bn_{\kk}|\pi_{\mathbf{p}(\bn_{\kk})}(\bm_{\kappa})$.  
Define further 
\begin{align}\label{glkkm}
  \hgl_\kk:=|\bn_{\kk}|\pi_{\mathbf{p}(\bn_{\kk})}(\bbT_\bmk)
=|\bbT_{\bm_\kk}|\gl_\kk.
\end{align}
Hence we assume 
$\hgl_\kk\to\gl\le\infty$,
where we define $\gl:=\infty$ in Case \ref{ReGD2m}.
Let
$q_{\kk} \in \mathbb{N}$ be such that 
\begin{align}  \label{ny3m}
q_{\kk} = O\bigpar{\hgl_\kk\qq}.
\end{align}
Then \eqref{ny3m}, \eqref{glkkm}, \eqref{rny1m}, and \eqref{rny2m} imply that
\begin{align} \label{N1em}
\frac{q^{2}_{\kk}|\bm_{\kk}|^{2}}{|\mathbf{n}_{\kk}|} &
\le C
|\bm_{\kk}|^{2}  \pi_{\mathbf{p}(\bn_{\kk})}(\bbT_\bmk)
= o(1),
\\\label{N1fm}
\sum_{i\in \cD(\bm_{\kappa})} \frac{q_{\kk}^{2}m_{\kappa}(i)^{2}}{n_{\kappa}(i)}
&\le C |\bm_\kk|^2\pi_{\mathbf{p}(\bn_{\kk})}(\bbT_\bmk)
\sum_{i\in \cD(\bm_{\kappa})} \frac{p_i(\bm_{{\kappa}})^{2}}{p_i(\bn_{\kk})}
=o(1)
.\end{align}
It follows again that, for $\kk$ large enough, we have
$q_\kk |\bm_\kk|\le |\bn_\kk|/2$
and
$q_\kk m_{\kk}(i) \le n_\kk(i)/2$ for all $i\in\cD(\bm_\kk)$,
and then \eqref{eq4G} and \eqref{eq11G} hold (with $m_\kk:=|\bm_\kk|$).
Consequently, by \eqref{sw2t}, \eqref{eq11G}, and \eqref{glkkm},
\begin{align}
\E (N_{\bm_{\kappa}}(\mathcal{T}_{\mathbf{n}_{\kappa}}))_{q_{\kk}}&   
=
|\bbT_{\bm_{\kappa}}|^{q_\kk}\E (N_{T_{\kappa}}(\mathcal{T}_{\mathbf{n}_{\kappa}}))_{q_{\kk}}
\notag\\&
=|\bbT_{\bm_{\kappa}}|^{q_\kk} \gl_\kk^{q_{\kk}}  \cdot  \exp \bigpar{o(1)}
= \hgl_\kk^{q_{\kk}}  \cdot  \exp \bigpar{o(1)}
\label{eq11Gm}
.\end{align}
The conclusions follow as in the proof of \refT{The1} with only notational
changes (e.g.\ replacing $\gl_\kk$ by $\hgl_\kk$).
\end{proof}

\begin{remark}\label{Rbad2}
We have excluded the case $\gl=0$ from \refT{ThPo2}
for the same reason as for \refT{The1}, discussed in \refR{Rbad?}:
we have not been able to show 
(without additional conditions)
that
$|\bn_{\kk}|\pi_{\mathbf{p}(\bn_{\kk})}(\bbT_\bmk)\to0$
and \eqref{rny2m} imply $\E N_{\bmk}(\cTbnk)\to0$.
\end{remark}

We can also use Stein's method as in \refSS{SSstein2} and obtain the
following analogue of \refT{Tstein}. (The only difference is that 
$T$ and $n_T$ are replaced by $\bm$.)

\begin{theorem}\label{Tsteinm}
Let $\bn$ and $\bm$ be two degree statistics.
There exists a universal constant $C$, 
not depending on $\bn$ or $\bm$, such 
that if\/ $\mathcal{T}_{\mathbf{n}}\simx {\rm Unif}(\mathbb{T}_{\mathbf{n}})$,
then, with $\gl:=\E N_\bm(\cTbn)$,
\begin{align}\label{jm1m}
\dtv\bigpar{ N_\bm(\cTbn),\Po(\gl)}
\le C\gl\sum_{i\in\cD(\bm)}\frac{m(i)^2}{n(i)}
= 
C\gl \frac{|\bm|^2}{|\bn|}\sum_{i\in\cD(\bm)}\frac{p_i(\bm)^2}{p_i(\bn)}
.\end{align}
\end{theorem}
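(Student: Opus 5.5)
We follow the proof of \refT{Tstein} almost verbatim, replacing the single pattern $\bd_T$ by the set of all degree sequences in $\bbD_\bm$. Write $n:=|\bn|$, $m:=|\bm|$, and
\begin{align*}
\gD:=\gl\sum_{i\in\cD(\bm)}\frac{m(i)^2}{n(i)} .
\end{align*}
As in \eqref{jm3}, the bound is trivial when $\gD\ge\frac14\gl$. So we may assume $\gD<\frac14\gl$, which gives $m(i)\le\frac14 n(i)$ for all $i$ and $m\le\frac14 n$, as in \eqref{jm5}--\eqref{jm6}. Construct $\cTbn$ from a uniform $\bd\in\bbB_\bn$ via \eqref{ctd}. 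For $k\in[n]$ let
\begin{align*}
I_k:=\bigindic{(d_{k+i-1})_{i=1}^m\in\bbD_\bm},
\end{align*}
so that, by the same cyclic argument as for \eqref{b1}, $W:=N_\bm(\cTbn)=\sumkn I_k$, and by rotational symmetry $\E I_k=\gl/n=:\pi$.

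The coupling for $I_k=1$ is the same four-step construction with one change. Mark $m(i)$ of the degrees equal to $i$ for each $i$, and remove the block $\cI_k$. Then place the marked degrees in $\cI_k$ in the order of a \emph{uniformly random} element of $\bbD_\bm$, chosen independently. Finally, refill the vacated positions with the unmarked stored degrees in uniformly random order. Given $I_k=1$, the block $\cI_k$ is a uniform element of $\bbD_\bm$ independent of the uniformly ordered rest. This follows from \eqref{sw4}--\eqref{sw6}: the count of configurations does not depend on which tree in $\bbT_\bm$ occupies the block. Hence the resulting $\bhdk$ has the conditional law of $\bd$ given $I_k=1$. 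Define $J_{\ell k}:=I_\ell(\bhdk)$ and apply \eqref{ari8}. The combinatorial estimate \eqref{ari10} carries over unchanged, since it only uses that $J_{\ell k}=I_\ell$ whenever $\cI_\ell$ avoids $\cI_k$ and the original positions of the marked degrees.

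The expectation bounds also transfer. The identity \eqref{tau2} gives $\Var W$ as before. The overlapping indices contribute $(2m-1)\gl$ as in \eqref{tau3}. For disjoint blocks, conditioning on $I_\ell=1$ puts exactly $m(i)$ degrees equal to $i$ in $\cI_\ell$, whatever element of $\bbD_\bm$ appears there. Thus \eqref{tau4}--\eqref{tau5} hold with $n_T(i)$ replaced by $m(i)$, giving $\E[\indic{\ell\in\cA_k}I_\ell]\le 2\gD/n$. This yields
\begin{align*}
\dtv\bigpar{W,\Po(\gl)}\le 4m\pi+\frac{\Var W-\E W}{\E W}+4\gD,
\end{align*}
the analogue of \eqref{tau7}. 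The bound $m\pi\le m^2\pi\le 8\gD$ follows from \eqref{ari3} exactly as in \eqref{tau8}; note that now $\pi=\gl/n$ with $\gl=\E N_\bm(\cTbn)$.

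The remaining step, and the only real obstacle, is $\Var W-\E W=O(\gl\gD)$, i.e.\ the analogue of \eqref{jmb5}. By \eqref{sw2t}, $\E(W)_q=|\bbT_\bm|^q\E(N_T(\cTbn))_q$ for $q=1,2$ and any $T\in\bbT_\bm$. Then \eqref{tau9} gives
\begin{align*}
\E(W)_q=\hgl^q e^{O(\gD/\gl)},\qquad \hgl:=|\bbT_\bm|\,n\pi_{\bp(\bn)}(\bm).
\end{align*}
Here the exponent is $O(m^2/n+\sum_i m(i)^2/n(i))$, which is $O(\gD/\gl)$ using $m^2/n\le 8\gD/\gl$. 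So $\gl=\hgl e^{O(\gD/\gl)}$, hence $\hgl=\Theta(\gl)$, and then $\E(W)_2=\gl^2+O(\gl\gD)$. Therefore $\Var W=\E W+O(\gl\gD)$, and combining gives $\dtv(W,\Po(\gl))=O(\gD)$, which is \eqref{jm1m}. The second form of \eqref{jm1m} is just a rewriting using \eqref{pin}.
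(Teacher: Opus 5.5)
Your proposal is correct and follows the paper's proof: the same coupling, the same bound up to \eqref{tau7}--\eqref{tau8}, and then a variance estimate. Your Step 3, which places the marked degrees as a uniform element of $\bbD_\bm$, is the correct reading of the paper's ``uniformly random order''. The only difference is cosmetic: the paper gets $(\Var W-\E W)/\E W=O(\gD)$ by rescaling \eqref{jmb5} by $|\bbT_\bm|$ via \eqref{gem1}, whereas you recompute it directly from \eqref{sw2t} and \eqref{tau9}, which rests on the same identity.
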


\begin{proof}
Let $t:=|\bbT_{\bn_T}|$, and
note that the new $\gl$ here is $t$ times the old $\gl$ in \refT{Tstein}
by \eqref{eq5g}.
We argue almost exactly
as in the proof of \refT{Tstein}, with only minor differences.
Instead of \eqref{jm7}, we now let $I_k$ be the indicator that 
$(d_{k+i-1})_{i=1}^m\in\bbB_\bm$. 
(And similarly for $J_{\ell k}$ in \eqref{ari5}.)
We define the coupling as before, except
  that in Step 3, we put the marked degrees in uniformly 
random order in $\cI_k$.
Then the calculations up to \eqref{tau7} and \eqref{tau8} are the same as
before. To estimate the crucial term $(\Var W-\E W)/\E W$ in \eqref{tau7},
we note that \refL{LCorollary1} implies
\begin{align}\label{gem1}
  \frac{\Var N_{\bn_T}(\cTbn) - \E N_{\bn_T}(\cTbn)}{\E N_{\bn_T}(\cTbn)}
= |\bbT_{\bn_T}|  
\frac{\Var N_{T}(\cTbn) - \E N_{T}(\cTbn)}{\E N_{T}(\cTbn)},
\end{align}
so this term is now $|\bbT_{\bn_T}|=t $ times larger than in the proof of
\refT{Tstein}. On the other hand, as remarked above, 
$\gl$ is also $t$ times larger, and thus
$\gD$ is $t$ times larger now.
Hence, the estimate \eqref{jmb5} that was proved in the proof of \refT{Tstein}
holds in the present setting too, and the theorem follows.
\end{proof}

\begin{remark}
  We can similarly treat the number of fringe trees that belong to some
  other given subset of $\bbT_{\bnk}$, for example the number of fringe
  trees that are isomorphic to $T_\kk$ as unordered rooted trees.
(Note that a version of \refL{LCorollary1} holds for any subset of
$\bbT_{\bn_T}$, with the same proof.)
This gives straightforward generalizations of \refTs{ThPo2} and \ref{Tsteinm}.
We omit the details.
\end{remark}

\section{Fringe trees of a given size (depending on $\kk$)} 
\label{Ssize}

In this section, we will need \refCond{Cond2}.
Recall the notation $N_m(T)$ defined in \eqref{eq2m}.
Summing \eqref{b1} over all trees $T$ with $|T|=m$, we obtain 
(for $|\bn|\ge m$)
\begin{align}\label{b2}
  N_m(\cTbn) = \sum_{j=1}^{|\mathbf{n}|}\indic{(d_{j+i-1})_{i=1}^m\in\bbD_m}
,\end{align}
where again $\bd=(d_i)_{i=1}^{|\bn|}$ is a uniformly random sequence in $\bbB_\bn$
constructed by \eqref{ctd2}.
Hence, using also the rotational symmetry, 
\begin{align}\label{b3}
\E  N_m(\cTbn) &= |\bn| \P\bigsqpar{(d_{i})_{i=1}^m\in\bbD_m}
.\end{align}
By \eqref{ctd2}, the  subsequence $(d_1,\dots,d_m)$ is obtained by drawing
without replacement from some fixed multiset $\set{d_1',\dots,d_{|\bn|}'}$, and
thus its distribution is exchangeable. 
Since each sequence in $\bbD_m$ corresponds to $m$ sequences in $\bbB_m$
(its cyclic shifts),
and all these have the same probability to appear,
it follows from \eqref{b3} and \eqref{Bn} that
\begin{align}\label{b3b}
\E  N_m(\cTbn) &
= \frac{|\bn|}{m}\P\bigsqpar{(d_{i})_{i=1}^m\in\bbB_m}
= \frac{|\bn|}{m}\P\lrsqpar{\sum_{i=1}^md_{i}=m-1}
.\end{align}

More generally, for any integers $m\ge1$ and $r\ge1$ with $rm\le |\bn|$, 
$(N_m(\cTbn))_r$ is the number of $r$-tuples of distinct subsequences 
of $\bd$ (again regarded cyclically) that belong to $\bbD_m$. 
These $r$ subsequences
have to be disjoint. The position of the first can be chosen in $|\bn|$ ways,
and then we have to select $r-1$ disjoint 
subsequences of length $m$ from an interval
of length $|\bn|-r$; this can be done in $\bigpar{|\bn|-m-(r-1)(m-1)}_{r-1}$
ways.
Again, by symmetry, having chosen the positions of these subsequences, the
probability that all are degree sequences in $\bbD_m$ does not depend on the
positions;
hence we obtain, reducing to $\bbB_m$ as in \eqref{b3b}, for $|\bn|\ge rm$,
\begin{align}\label{b4}
\E  \bigpar{N_m(\cTbn)}_r &
= |\bn|\xpar{|\bn|-rm+r-1}_{r-1}
\P\lrsqpar{(d_{i})_{i=(j-1)m+1}^{jm}\in\bbD_m\text{ for }j=1,\dots,r}
\notag\\&
= \frac{|\bn|\xpar{|\bn|-rm+r-1}_{r-1}}{m^{r}}
\P\lrsqpar{(d_{i})_{i=(j-1)m+1}^{jm}\in\bbB_m\text{ for }j=1,\dots,r}
\notag\\&
= \frac{|\bn|\xpar{|\bn|-rm+r-1}_{r-1}}{m^{r}}
\P\lrsqpar{\sum_{i=(j-1)m+1}^{jm}(d_i-1)=-1\text{ for }j=1,\dots,r}
.\end{align}

\begin{lemma}\label{LM1}
  Assume \refConds{Condition1} and \ref{Cond2}, and that the limit
  distribution $\bp$ is nonlattice.
Let $m_\kk$, $\kk\ge1$, be integers with $1\le m_\kk\le|\bnk|$
such that $m_\kk\to\infty$ and  $|\bnk|-m_\kk\to\infty$
as \kktoo.
Then, as \kktoo,
\begin{align}\label{lm1}
\E [N_{m_\kk}(\cTbnk)] 
\sim \frac{|\bnk|}
  {\sqrt{2\pi}\gsp (1-\frac{m_\kk}{|\bnk|})\strut\qq m_\kk^{3/2}}
.\end{align}
Consequently,
\begin{romenumerate}
\item \label{LM1<}
If\/ $m_\kk\ll |\bnk|^{2/3}$, then $\E [N_{m_\kk}(\cTbnk)] \to\infty$.
\item \label{LM1=}
If\/ $m_\kk\sim a |\bnk|^{2/3}$ for some constant $0<a<\infty$, 
then $\E [N_{m_\kk}(\cTbnk)] \to (2\pi\gssp a^3)\qqw$.
\item \label{LM1>}
If\/ $m_\kk\gg |\bnk|^{2/3}$, then $\E [N_{m_\kk}(\cTbnk)] \to0$.
\end{romenumerate}
\end{lemma}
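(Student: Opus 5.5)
The plan is to start from the exact formula \eqref{b3b},
\begin{align*}
\E N_{m_\kk}(\cTbnk)=\frac{|\bnk|}{m_\kk}\P\Bigsqpar{\sum_{i=1}^{m_\kk}d_i=m_\kk-1},
\end{align*}
where $(d_1,\dots,d_{m_\kk})$ is obtained by drawing $m_\kk$ values without replacement from the multiset of $|\bnk|$ degrees with empirical distribution $\bp(\bnk)$. Everything then reduces to a local limit theorem for this sum. I would use the local limit theorem for sampling without replacement announced in the introduction (Theorem~\ref{TLLT}, proved in \refApp{ALLT}), assuming it is available in the following form. Consider a population of $N=|\bnk|$ integer values with mean $\mu_\kk$ and variance $\gssk$, with $\gssk\to\gssp>0$ and uniformly integrable squares, and with limit distribution nonlattice. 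Draw $m$ of them without replacement, with $m\to\infty$ and $N-m\to\infty$. Then
\begin{align*}
\P(S_m=x)=\frac{1}{\sqrt{2\pi m\gssk(1-m/N)}}\Bigpar{\exp\Bigpar{-\frac{(x-m\mu_\kk)^2}{2m\gssk(1-m/N)}}+o(1)}
\end{align*}
uniformly in $x$. The variance factor $m\gssk(1-\frac{m-1}{N-1})$ of sampling without replacement is asymptotically $m\gssk(1-m/N)$, since $N-m\to\infty$.

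Step 1 checks the hypotheses. \refCond{Cond2} gives $\gssk\to\gssp$ by \eqref{cond2b}. We need $\gssp>0$, and this holds because $\bp$ is nonlattice, so it is not concentrated at the single point $1$. (Note $\E\Dp=1$ by \refR{RD2}.)

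Step 2 evaluates the Gaussian exponent at $x=m_\kk-1$. By \eqref{the}, $\mu_\kk=1-1/|\bnk|$, so $x-m\mu_\kk=m/|\bnk|-1$, which has absolute value at most $1$. The variance $m\gssk(1-m/N)$ tends to infinity, because $m(N-m)/N\ge\frac12\min(m,N-m)\to\infty$. Hence the exponential factor tends to $1$, and
\begin{align*}
\P(S_m=m-1)\sim\bigpar{2\pi\gssp m(1-m/N)}\qqw .
\end{align*}
Substituting this into \eqref{b3b} gives \eqref{lm1}.

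Step 3 derives the consequences. If $m_\kk=o(|\bnk|)$, the factor $(1-m_\kk/|\bnk|)\qq$ tends to $1$, so $\E N\asymp|\bnk|m_\kk^{-3/2}$, and \ref{LM1<} and \ref{LM1=} follow immediately. For \ref{LM1>} with $m_\kk$ comparable to $|\bnk|$, I would write the right-hand side of \eqref{lm1} as
\begin{align*}
\frac{|\bnk|^{3/2}}{m_\kk^{3/2}\,(|\bnk|-m_\kk)\qq}.
\end{align*}
Since $m_\kk\gg|\bnk|^{2/3}$, the first factor satisfies $|\bnk|^{3/2}/m_\kk^{3/2}=o(|\bnk|^{1/2})$. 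The second factor needs care. When $m_\kk\le|\bnk|/2$, it is at most $\sqrt2\,|\bnk|\qqw$, so the whole expression tends to $0$. When $m_\kk>|\bnk|/2$, the first factor is bounded, and $(|\bnk|-m_\kk)\qqw\to0$ because $|\bnk|-m_\kk\to\infty$.

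The main obstacle is the local limit theorem itself, if it has to be proved here rather than quoted. It must hold uniformly when the sampling fraction $m/N$ is not small, and the lattice issue is subtle: nonlattice is assumed only for the limit $\bp$. I would handle this via the standard representation of sampling without replacement as i.i.d.\ draws conditioned on a sum constraint, or via a direct Fourier inversion using the hypergeometric characteristic function (Erdős–Rényi / Hájek style). The argument splits the frequency range into a neighbourhood of $0$, where a Taylor expansion uses the convergence of second moments, and its complement, where the nonlattice property of $\bp$, transferred to $\bp(\bnk)$ via \eqref{EqEx2}, gives exponential decay.
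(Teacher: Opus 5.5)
Your proposal is correct and follows essentially the same route as the paper. Both arguments start from \eqref{b3b} and apply \refT{TLLT} at $k=m_\kk-1$; there $|k-m_\kk\bxd_\kk|\le1$ and $\hgssk\to\infty$ make the Gaussian factor tend to $1$, and \ref{LM1>} is handled by splitting at $m_\kk=\frac12|\bnk|$. The one step you leave implicit is that \refT{TLLT} actually takes \refCond{Cond1Z} and the Lindeberg condition \eqref{Lindeberg} as hypotheses; the paper derives these from \refConds{Condition1} and \ref{Cond2} via uniform integrability of $D_\kk^2$ (i.e.\ \eqref{lm1e}), together with $\hgsk\to\infty$ and $Q_\kk\sim|\bnk|\gssp$, and your assumed version of the theorem silently absorbs this check.
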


\begin{proof}
Note that the assumption that $\bp$ is nonlattice entails that 
$\gss_\bp>0$.
  We apply \refT{TLLT}, choosing any fixed sequences
  $\bd_\kk=(d_{i\kk})_{i=1}^{\xbnk}\in\bbB_{\bnk}$.
(The notation in \refApp{ALLT} differs slightly from the one used here,
since in \refApp{ALLT} we let $\bd$ be deterministic, and randomize in
\eqref{Sm}; 
thus $\bd$ there corresponds to $(d'_i)_i$ in \eqref{ctd2}.
However, the random sums in \eqref{b3b} and \eqref{Sm} have the
same distribution.)
Then, the variables defined in \eqref{apa}--\eqref{apc} become,
using the random variables $D_\kk$ defined in \refR{RD}
together with \eqref{the}, \eqref{cond2b}, and \eqref{ape}, as $\kk \to  \infty$,
\begin{align}\label{lm1d}
  \bxd_\kk&=\E  D_\kk = 1-|\bnk|\qw,
\\\label{lm1Q}
Q_\kk&=\xbnk\Var D_\kk = |\bnk|\gssk\sim |\bnk|\gssp,
\\\label{lm1gs}
\hgssk&=m_\kk\Bigpar{1-\frac{m_\kk}{|\bnk|}}\gssk
\sim m_\kk\Bigpar{1-\frac{m_\kk}{|\bnk|}}\gssp
.\end{align}
In particular, \eqref{lm1Q} shows that
\refCond{Cond1Z} follows from \refConds{Condition1} and \ref{Cond2}.
Note that the assumptions $m_\kk\to\infty$ and $|\bnk|-m_\kk\to\infty$
imply $\hgssk\to\infty$. (Use \eqref{lm1gs} and consider the cases 
$m_\kk\le\frac12|\bnk|$ and $m_\kk>\frac12|\bnk|$ separately.)
Moreover,
by \refR{RD}, the random variables $D_\kk^2$ are \ui, and thus so are
$|\Dk-\E\Dk|^2$. In other words,
see \eg{} \cite[Definition 5.4.1]{Gut},
\begin{align}\label{lm1e}
  \lim_{a\to\infty} \sup_{\kk}\frac{1}{\xbnk}
\sum_{|d_{i\kk}-\bxd_\kk|>a}|d_{i\kk}-\bxd_\kk|^2 =0
\end{align}
Since, as just said, $\hgssk\to\infty$, it follows from \eqref{lm1e} 
and \eqref{lm1Q}
that the Lindeberg condition \eqref{Lindeberg} holds.

Consequently, \refT{TLLT} applies.
We choose there $k:=m_\kk-1$; then $|k-m_\kk\bxd_\kk|\le1$, 
and thus \eqref{tllt} yields,
using \eqref{lm1gs},
\begin{align}\label{lm1f}
\P\lrsqpar{\sum_{i=1}^{\mk}d_{i\kk}=\mk-1}
=
\P\bigsqpar{\Smk=m_\kk-1}
\sim 
\frac{1}
 {\sqrt{2\pi \hgssk} }
=\frac{1}
 {\sqrt{2\pi\gssk \mk} (1-\frac{m_\kk}{\xbnk})\strut\qq }
.\end{align}
Hence, \eqref{b3b} yields \eqref{lm1}.

The conclusions \ref{LM1<}--\ref{LM1>} are immediate consequences of
\eqref{lm1};
for \ref{LM1>}, we again
consider the cases 
$m_\kk\le\frac12|\bnk|$ and $m_\kk>\frac12|\bnk|$ separately.
\end{proof}

\begin{theorem}\label{TS}
  Assume \refConds{Condition1} and \ref{Cond2}, and that the limit
  distribution $\bp$ is nonlattice.
Let $m_\kk$, $\kk\ge1$, be integers with $1\le m_\kk\le|\bnk|$
such that $m_\kk\sim a\xbnk^{2/3}$ as \kktoo,
for some constant $a>0$.  
Then, as \kktoo,
\begin{align}\label{ts}
N_{m_\kk}(\cTbnk) \dto\Po\bigpar{(2\pi\gssp a^3)\qqw}
.\end{align}
\end{theorem}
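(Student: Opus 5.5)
We use the method of moments. Fix $r\ge1$ and set $\mu:=(2\pi\gssp a^3)\qqw$. It suffices to show that
\begin{align*}
\E\bigpar{N_{\mk}(\cTbnk)}_r\to\mu^r ,
\end{align*}
since the Poisson distribution is determined by its moments. The starting point is the exact formula \eqref{b4}. Since $r\mk\sim ra\xbnk^{2/3}=o(\xbnk)$, the condition $\xbnk\ge r\mk$ holds for large $\kk$. Moreover, by \eqref{eq5Gb},
\begin{align*}
\xbnk\bigpar{\xbnk-r\mk+r-1}_{r-1}=\xbnk^r\bigpar{1+o(1)}.
\end{align*}
It therefore remains to prove
\begin{align*}
\P\lrsqpar{\sum_{i=(j-1)\mk+1}^{j\mk}d_i=\mk-1\text{ for }j=1,\dots,r}
\sim\bigpar{2\pi\gssp\mk}^{-r/2}.
\end{align*}
Given this, the right-hand side of \eqref{b4} is asymptotic to $\xbnk^r\mk^{-3r/2}(2\pi\gssp)^{-r/2}\to\mu^r$, exactly as in the proof of \refL{LM1}\ref{LM1=}.

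To prove the displayed joint local estimate, I condition successively on the blocks. The $r$ consecutive blocks of length $\mk$ are obtained by drawing without replacement from the multiset $\set{d'_1,\dots,d'_{\xbnk}}$. Condition on the first $j-1$ blocks, given that each of them has sum $\mk-1$. The $j$th block is then a sample of size $\mk$ drawn without replacement from the remaining multiset. That multiset has size $\xbnk-(j-1)\mk=\xbnk(1+o(1))$, and its sum is again exactly one less than its size, so its mean is still $1-1/(\xbnk-(j-1)\mk)$. The plan is to apply \refT{TLLT} to this reduced multiset with $k=\mk-1$; as in \eqref{lm1f} this gives the conditional probability $\sim(2\pi\gssp\mk)\qqw$. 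Multiplying the $r$ conditional factors yields the claim.

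The main obstacle is that the reduced multiset is random: it depends on which degrees fell into the earlier blocks. I need \refT{TLLT} to hold uniformly over all removed blocks, or at least with high probability, and I need the remaining bad event to carry negligible probability mass. Concretely, I have to check that the reduced multiset still satisfies \refCond{Cond1Z}, that its variance is $\gssp(1+o(1))$, and that the Lindeberg condition \eqref{Lindeberg} holds. Removing $O(\mk)=o(\xbnk)$ elements changes the empirical degree distribution by $o(1)$ in total variation. The delicate part is the second moment: the removed elements could include some very large degrees. I would handle this by fixing a truncation level $A$ and using the uniform integrability \eqref{lm1e}. The removed mass $\sum d_i^2$ over the earlier blocks has expectation $O(\mk)=o(\xbnk)$ unconditionally. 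The conditioning events have probability of order $\mk^{-1/2}$ each, which only inflates this expectation by a factor $\mk^{(j-1)/2}$, still $o(\xbnk)$ since $\mk^{1+r/2}$ is small compared with $\xbnk$ only for... Actually that bound fails for large $r$, so I would instead proceed as follows. Let $G$ be the event that the earlier blocks remove at most $\eps\xbnk$ of the squared mass. By Markov's inequality, $G$ fails with probability $O(\mk/(\eps\xbnk))=o(\mk^{-r/2})$ for fixed $r$, which holds because $\mk\sim a\xbnk^{2/3}$ forces $\mk^{1+r/2}\ll\xbnk$ only when $r<2$. This needs care, so for general $r$ I would use a sharper tail bound, namely a Bernstein-type bound for sampling without replacement on the truncated squares. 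On $G$, the hypotheses of \refT{TLLT} hold uniformly, and I would apply it along arbitrary subsequences of admissible multisets to obtain uniformity. On the complement of $G$, I would bound the conditional local probability crudely by a uniform concentration bound of order $\mk\qqw$, which I expect to follow from the proof of \refT{TLLT} via characteristic functions, using only the mass on bounded degrees, which is unaffected by the removal.
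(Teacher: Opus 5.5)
Your architecture is the same as the paper's: the exact formula \eqref{b4}, conditioning block by block, \refT{TLLT} applied to the reduced multiset, and uniformity via subsequences. However, the step you yourself flag as the main obstacle is never closed. The Markov bound gives only $\P(G^c)=O(\mk/\xbnk)$, and here $\mk/\xbnk\asymp\mk^{-1/2}$. That is too weak for every $r\ge2$, i.e.\ whenever there is anything to condition on; your threshold ``$r<2$'' is also miscomputed. What you offer instead is only asserted: a Bernstein-type bound, and a crude $O(\mk^{-1/2})$ local bound on $G^c$ combined with an unstated estimate of $\P(G^c)$ given that the earlier blocks have the prescribed sums. So as written, the uniform conditional local estimate, and with it the induction over blocks, is not proved.

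The missing observation is that uniform integrability passes \emph{deterministically} to every sub-multiset containing a proportion $1-o(1)$ of the entries. No bad event is needed. Put $\eps_A:=\sup_\kk\E[\Dk^2\indic{\Dk>A}]$; this tends to $0$ as $A\to\infty$ by \refR{RD2}. Remove \emph{any} $(j-1)\mk$ entries, and let $\Dk'$ be a uniformly random element of what remains. Then for large $\kk$, uniformly in the removed entries,
\begin{align*}
\E\bigsqpar{(\Dk')^2\indic{\Dk'>A}}\le\frac{\xbnk}{\xbnk-(j-1)\mk}\,\eps_A\le 2\eps_A .
\end{align*}
In particular, the removed squared mass is at most $(j-1)\mk A^2+\eps_A\xbnk=o(\xbnk)$, so your event $G$ holds surely for large $\kk$. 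Removing $o(\xbnk)$ entries also changes the empirical distribution by only $o(1)$ in total variation. Together with the display, this gives \refConds{Condition1} and \ref{Cond2} for the reduced multisets, uniformly over all possible earlier blocks. Hence \refCond{Cond1Z} and \eqref{Lindeberg} hold as in the proof of \refL{LM1}, and your subsequence argument then delivers the uniform version of \refT{TLLT}. This is exactly the paper's argument, phrased there as $\Dk'=(\Dk\mid I_\kk>\mk)$ with $\P(I_\kk>\mk)\to1$.

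A small factual slip: after removing $j-1$ blocks of sum $\mk-1$, the remaining multiset does not have sum equal to its size minus one. Its sum equals its size plus $j-2$; cf.\ \eqref{ts3}, where for $j=2$ the sum equals the size. This is harmless, since \refT{TLLT} only needs $|\mk-1-\mk\bxd'|=O(1)=o(\hgsk)$.
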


\begin{proof}
  We use \eqref{b4} and the method of moments.
Let $\gl:=(2\pi\gssp a^3)\qqw$; we have already shown in \refL{LM1} that 
$\E N_{\mk}(\cTbnk) \to\gl$ as \ktoo.
Now fix $r\ge2$. Then \eqref{b4} yields
\begin{align}\label{ts1}
  \E  \bigpar{N_{\mk}(\cTbnk)}_r &
\sim \lrpar{\frac{\xbnk}{\mk}}^r
\P\lrsqpar{\sum_{i=(j-1)m_\kk+1}^{jm_\kk}(d_{i\kk}-1)=-1\text{ for }j=1,\dots,r}
,
\end{align}
where $\bd_\kk=(d_{i\kk})_{i=1}^{\xbnk}$ is a uniformly random sequence in $\bbB_\bnk$.

Consider for simplicity first the case $r=2$.
Condition on any values of $d_{1\kk},\dots,d_{\mk\kk}$ such that 
\begin{align}
  \label{ts2}
\sum_{i=1}^{\mk}(d_{i\kk}-1)=-1. 
\end{align}
There remains $n_\kk'(i):=n_\kk(i)-\numset{j\le \mk:d_{j\kk}=i}$ degrees $i$
in our degree sequence $\bd_\kk$, for each $i\ge0$, and the
values $d_{\mk+1,\kk},\dots,d_{2\mk,\kk}$ are obtained by drawing without
replacement from this remaining sequence.
The sequence $\bn_\kk':=(n_\kk'(i))_{i\ge0}$ is not quite a degree statistic
according to our definitions, since we have
\begin{align}\label{ts3}
  \sum_{i\ge0}(i-1)n_\kk'(i) 
=
  \sum_{i\ge0}(i-1)n_\kk(i) -\sum_{j=1}^{\mk} (d_{j,\kk}-1) 
=0
\end{align}
instead of $-1$ as in \eqref{ds}. However, this is of no importance 
in the asymptotic calculations in \eqref{lm1f} based on \refT{TLLT}. 
Moreover, we have $n_\kk'(i)\le n_\kk(i)$ and $n_\kk'(i)=n_\kk(i)-O(\xbnk^{2/3})$
for all $i \ge 0$, and $|\bn_\kk'|=|\bnk|-\mk\sim\xbnk$.
It follows that, 
uniformly for any choices of $d_{i\kk}$ ($1\le i\le\mk$ and 
$\kk\ge1$) satisfying \eqref{ts2}, the sequences $\bn_\kk'$ satisfy
\refCond{Condition1} 
(except for \eqref{ds} as just said), with the same $\bp$. 
Moreover, \refCond{Cond2} too holds for $\bn_\kk'$.
To see this, we use the equivalent formulation with
uniform integrability of $\Dk^2$ in \refR{RD2}.
We can construct the random degree $\Dk$ as
$\Dk:=d_{I_\kk,\kk}$ where
$I_k$ is a uniformly random index in $[|\bnk|]$; furthermore, since the
order of $d_{1\kk},\dots,d_{|\bnk|,\kk}$ does not matter here, we can
as well first
condition on any given $d_{1\kk},\dots,d_{\mk,\kk}$.
Then,
the corresponding random degree defined by
$\bn_\kk'$ is $\Dk':=(\Dk\mid I_\kk>m_\kk)$, i.e., $\Dk$ conditioned on
$I_\kk>m_\kk$. Since 
the random variables $\Dk^2$ are uniformly integrable and
$\P(I_\kk>m_\kk)=1-m_\kk/|\bnk|\to1$,
it follows 
(from the definition of uniform integrability \cite[Section 5.4]{Gut})
that the variables $(\Dk')^2=(\Dk^2\mid I_\kk>m_\kk)$ also are uniformly
integrable. Thus, by \refR{RD2}, 
\refCond{Cond2} holds also for the sequences $\bn_\kk'$,
for any choices of drawn values $d_{1\kk},\dots,d_{\mk\kk}$
(and therefore uniformly in all such choices).
Hence, the argument yielding \eqref{lm1f} yields also,
with $\Smk':=\sum_{i=\mk+1}^{2\mk}d_{i\kk}$ and
noting that $\mk/\xbnk\to0$,
\begin{align}\label{ts4}
\P\lrsqpar{\sum_{i=\mk+1}^{2\mk}d_{i\kk}=\mk-1\Bigm| d_{1\kk},\dots,d_{\mk\kk}}
=
\P\bigsqpar{\Smk'=m_\kk-1}
\sim 
\frac{1} {\sqrt{2\pi\gssk \mk}}
,\end{align}
uniformly for all $d_{1\kk},\dots,d_{\mk\kk}$ such that
$\sum_{i=1}^{\mk}d_{i\kk}=\mk-1$. Thus
\begin{align}\label{ts52}
&\P\lrsqpar{\sum_{i=1}^{\mk}d_{i\kk}=\sum_{i=\mk+1}^{2\mk}d_{i\kk}=\mk-1}
                           \notag\\&\qquad
=
\E\lrsqpar{
\P\lrsqpar{\sum_{i=\mk+1}^{2\mk}d_{i\kk}=\mk-1\Bigm|  d_{1\kk},\dots,d_{\mk\kk}}
\lrindic{
\sum_{i=1}^{\mk}d_{i\kk}=\mk-1}}
\notag\\&\qquad
\sim 
\frac{1} {\sqrt{2\pi\gssk \mk}}
\P\lrsqpar{\sum_{i=1}^{\mk}d_{i\kk}=\mk-1}
\sim 
\lrpar{\frac{1} {\sqrt{2\pi\gssk \mk}}}^2
.\end{align}

For larger $r$, we argue in the same way, now conditioning on
$d_{i\kk}$ for $1\le i\le (r-1)\mk$, and we obtain by induction that for
every fixed $r\ge1$,
\begin{align}\label{ts5}
  \P\lrsqpar{\sum_{i=(j-1)m+1}^{jm}(d_{i\kk}-1)=-1\text{ for }j=1,\dots,r}
\sim 
\lrpar{\frac{1} {\sqrt{2\pi\gssk \mk}}}^r
.\end{align}
Consequently, \eqref{ts1} yields
\begin{align}\label{ts6}
  \E  \bigpar{N_{\mk}(\cTbnk)}_r &
\sim 
\lrpar{\frac{\xbnk} {\sqrt{2\pi\gssk \mk^3}}}^r
\sim \lrpar{\frac{1} {\sqrt{2\pi\gssk a^3}}}^r
= \gl^r.
\end{align}
Hence, $N_{\mk}(\cTbnk) \dto\Po(\gl)$ by the method of moments.
\end{proof}

\begin{remark}\label{RTS}
The assumptions
$m_\kk\to\infty$ and  $|\bnk|-m_\kk\to\infty$ in \refL{LM1}
are necessary, in general.

First, if $m_\kk=O(1)$, we may by considering subsequences assume that
$m_\kk=m$ is constant, and then $N_{m}(\cTbnk)$ is a sum of a finite number
of subtree counts $N_{T_j}(\cTbnk)$ with fixed $T_j$. 
(The trees $T_j$ are all the trees in $\bbT_{m}$.)
This is treated in \cite{SJ378}. 
For every large $m$, we then have $\E N_{m}(\cTbnk) \sim c(m)|\bn_\kk|$
with $c(m)>0$,
but there might be some small $m$ for which 
$\E N_{m}(\cTbnk)$ is smaller, or even vanishes.
For an example, suppose that $n_\kk(i)>0$ only for $i\in\set{0,9,10}$ and
$p_9=p_{10}=1/19$ (to get the correct mean). 
Then $\bp$ is nonlattice, but $N_m(\cTbnk)=0$ for $2\le m\le 9$.

Similarly, if $\mk=\xbnk-\ell$ for some fixed $\ell$, it follows from
\eqref{b3b} and symmetry that
\begin{align}\label{gwi}
\E  N_{\xbnk-\ell}(\cTbnk) &
\sim\P\lrsqpar{\sum_{i=1}^{\xbnk-\ell}d_{i}=\xbnk-\ell-1}
=\P\lrsqpar{\sum_{j=1}^{\ell}d_{j}=\ell}
.\end{align}
For $\ell=0$, this equals 1. (There is trivially always exactly one fringe
subtree of size $\xbnk$, viz.\ the entire tree $\cTbnk$.)
For $\ell\ge1$, \eqref{gwi} converges  to some number $\pi_\ell\in[0,1)$,
and then
the asymptotic distribution of $N_{\xbnk-\ell}(\cTbnk)$ is $\Be(\pi_\ell)$;
note that there is not room for more than one fringe tree of this size when
$\xbnk>2\ell$. We have $\pi_\ell>0$ for all large $\ell$, but the example
above shows that $\pi_\ell=0$ is possible for some small $\ell$.
\end{remark}

\begin{remark}
  We conjecture that when $\mk\ll\xbnk^{2/3}$, and thus
  $\E[N_\mk(\cT_\bnk)]\to\infty$ by \refL{LM1},
we have asymptotic normality, in analogy with \refTs{The01}, \ref{The1}, and
\ref{ThPo2}. It might be possible to prove this using the Gao--Wormald theorem,
as in the other theorems just mentioned, but since this would require
estimates of moments of unbounded order, the estimates in the proof of
\refT{TS} are not precise enough; we would need either a better estimate of
the error in \refT{TLLT}, or a different, non-inductive, way to estimate
\eqref{b4} or \eqref{ts1}. 
\end{remark}

\begin{remark}
  We have assumed in both \refL{LM1} and \refT{TS} that
the asymptotic degree distribution $\bp$ is nonlattice.
The results are easily extended to the case when all degrees $d_{i\kk}$
are divisible
by some fixed integer $h>1$, for example if all degrees are even; we may
then apply  \refT{TLLT} to the integers $d_{i\kk}/h$.
There are, however, also limiting cases.
For example, suppose that 
$\bp$ is concentrated on the even integers, and thus lattice, but that
$n_\kk(1)>0$ although
$n_\kk(1)=o(\xbnk)$.
In \refE{Eperiodic}  below, one example of this type is studied in some
detail,
and it is seen that
in the critical case $m_\kk=\Theta(|\bnk|^{2/3})$,
there will periodicities in the result if 
$n_\kk(1)=O(\xbnk^{1/3})$, but it seems that these disappear if $n_\kk(1)$
is larger.
We believe that this behaviour is typical for cases with  $\bp$ lattice,
but 
we have not pursued this further, and leave such cases to the reader.
\end{remark}

\begin{example}\label{Eperiodic}
Suppose that 
$\xbnk\to\infty$ with
$n_\kk(i)>0$ only for $i\in\set{0,1,2}$, and that 
$n_\kk(1)\sim b\xbnk^{1/3}$ 
for some constant $b>0$.
Then it follows from \eqref{ds} that necessarily
$n_\kk(0)=\frac12(|\bnk|-n_\kk(1)+1)=n_\kk(2)+1$, and thus 
\refCond{Condition1} holds with $p_0=p_2=\frac12$;
hence $\bp$ is concentrated on $\set{0,2}$ and thus has span 2.
\refCond{Cond2} holds too.

Let $m_\kk\sim a\xbnk^{2/3}$ for some $a>0$, let $\bd$ be uniformly random
in $\bbB_{\bn_{\kk}}$, and let
$Y_\kk:=|\set{i\le m_\kk:d_{i\kk}=1}|$. 
It is easy to see that as $\kk\to\infty$, $Y_\kk\dto Y\simx\Po(ab)$.
Conditioned on $Y_\kk$, the remaining $\mk-Y_\kk$ degrees $d_{i\kk}$ with
$i\le\mk$ are all 0 or 2, so their sum is even, and the number of $2$s has a
hypergeometric distribution; it follows easily from this 
(for example using \refT{TLLT}) that for every fixed $y\in\bbNo$, 
\begin{align}
  \P[S_\mk=\mk-1\mid Y_\kk=y]
=
  \begin{cases}
    0,& y\not\equiv\mk-1\pmod2,
\\
\frac{2}{\sqrt{2\pi\mk}}(1+o(1)),&y\equiv\mk-1\pmod2.
  \end{cases}
\end{align}
Furthermore, this holds uniformly for all $y\le m_\kk\qq$, say, and it follows
easily from \eqref{b3b} that, cf.\ \eqref{lm1f} and \refL{LM1}\ref{LM1=}
in the nonlattice case,
\begin{align}
\E N_\mk(\cT_\bnk)
&=
\frac{\xbnk}{\mk}  \P[S_\mk=\mk-1\mid Y_\kk=y]
\sim\frac{2\xbnk}{\sqrt{2\pi\mk^3}}\P\bigpar{Y_\kk\equiv \mk-1\pmod2}
\notag\\&
\sim
  \begin{cases}
    \frac{1+e^{-ab}}{\sqrt{2\pi a^3}},& \mk\text{ is odd},
\\
    \frac{1-e^{-ab}}{\sqrt{2\pi a^3}},& \mk\text{ is even}.
  \end{cases}
\end{align}
Higher factorial moments can be calculated asymptotically by the method in
the proof of \refT{TS}, and it follows that 
$N_\mk(\cT_\bnk)$ has one asymptotic Poisson distribution for even $\mk$,
and a different asymptotic Poisson distribution for odd $\mk$.
\end{example}

\section{Application to Galton--Watson Trees} \label{AppGW}

As in our previous paper \cite{SJ378} for the number of fringe trees equal
to a fixed tree $T$ (not depending on $\kk$), 
we can use results for random trees with given vertex degrees to obtain
corresponding results also for conditioned \GWt{s} (or the somewhat more
general simply generated trees) by conditioning on the degree statistic of
the tree. For the results in the present paper, 
where we consider fringe trees $T_\kk$ growing with $\kk$, 
such results for \GWt{s} have been shown 
by \citet{Cai2017} by other methods.
(See also \cite{SJ285} for the case of a fixed tree $T_\kk=T$.)
We will here illustrate the conditioning method by giving
alternative proofs of two of their results, in one case extending it to
include also some 
offspring distributions with infinite variance.

We consider for simplicity here a \cGWt{} $\cT_{\bp,n}$ obtained from a
\GWt{} $\cT_\bp$ with offspring distribution $\bp$ having mean 1 by
conditioning on the size $|\cT_\bp|=n$; it is well known, using tiltings, 
that the results immediately extend to \cGWt{s} with a large class of offspring
distributions, and even further to a large class of simply generated trees,
see e.g.\ \cite{SJ264}. 

The key fact is (as in the related arguments in \cite{SJ378})  
that
for any fixed degree statistic $\mathbf{n}=(n(i))_{i \geq 0}$ with
$\mathbb{P}(\mathbf{n}_{\mathcal{T}_{\mathbf{p},n}} = \mathbf{n}) >0$,
it follows from \eqref{pip},
see e.g.\ \cite[Proposition 8]{Addario2022},
  that 
\begin{align}\label{gw0}
  \text{conditionally given 
$\mathbf{n}_{\mathcal{T}_{\mathbf{p},n}} =\mathbf{n}$, 
we have $\mathcal{T}_{\mathbf{p},n} \simx {\rm Unif}(\mathbb{T}_{\mathbf{n}})$}
.\end{align}
 
We begin with a straightforward conditioning argument applied to the number
of fringe trees of a given size. The result is essentially a special case of
\cite[Theorem 1.4(ii)]{Cai2017} 
(there proved directly by the method of moments),  
although we have also computed the (asymptotic) expectation.

\begin{theorem}[{\cite[Theorem 1.4(ii)]{Cai2017}}]\label{TheoGW2}
Let $\mathbf{p} := (p_{i})_{i \geq 0}$ be a probability distribution on
$\bbNo$ such that $p_{0} >0$ and $p_{i} >0$ for at least one $i \geq 2$. Let
$\mathcal{T}_{\mathbf{p},n}$ be a Galton--Watson tree 
with offspring distribution $\mathbf{p}$ conditioned to have
size $n \in \mathbb{N}$
(we only consider $n$ along values for which the conditioning is well
defined). 
Furthermore, suppose that $\mathbf{p}$ is nonlattice, 
the mean $\sum_{i  \geq 0} i p_{i} = 1$, and the variance
$\sigma^{2}_{\mathbf{p}} \coloneqq \sum_{i \geq  0}(i-1)^{2}p_{i} <\infty$. 
Let $m_n$ be integers with $1\le m_n \le
n$ 
such that $m_n \sim a n^{2/3}$ as $n \rightarrow \infty$,
for some constant $a>0$.  
Then, as $n \rightarrow \infty$,
\begin{align} \label{eq15GW}
N_{m_n}(\mathcal{T}_{\mathbf{p},n}) \dto\Po\bigpar{(2\pi\gssp a^3)\qqw}
.\end{align}
\end{theorem}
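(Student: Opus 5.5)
We reduce to \refT{TS} by conditioning on the degree statistic of $\cT_{\bp,n}$, using \eqref{gw0}. Let $\bN_n:=\bn_{\cT_{\bp,n}}$ be the (random) degree statistic. By \eqref{gw0}, conditionally on $\bN_n=\bn$ the tree $\cT_{\bp,n}$ is uniform in $\bbT_\bn$. Hence the law of $N_{m_n}(\cT_{\bp,n})$ is the mixture over $\bn$ of the laws of $N_{m_n}(\cT_\bn)$, weighted by $\P(\bN_n=\bn)$. It therefore suffices to show the following: for every deterministic sequence $\bn_n$ in a set of degree statistics with probability $1-o(1)$, we have $N_{m_n}(\cT_{\bn_n})\dto\Po(\gl)$ with $\gl:=(2\pi\gssp a^3)\qqw$.

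Rather than handling uniformity directly, I use the standard subsequence/Skorokhod argument. The first step is to show that the random empirical distribution $\bp(\bN_n)$ satisfies, in probability, $p_i(\bN_n)\to p_i$ for all $i$ and $\sum_i i^2p_i(\bN_n)\to\sum_i i^2 p_i$. The first is the law of large numbers for degree counts in conditioned \GWt{s}, which is well known (e.g.\ \cite{SJ264}). For the second moment, I would use the cyclic-lemma representation. The degree sequence of $\cT_{\bp,n}$ is, up to rotation, an i.i.d.\ sequence $\xi_1,\dots,\xi_n$ with law $\bp$, conditioned on $\sum\xi_i=n-1$. Since $\P(\sum\xi_i=n-1)\asymp n^{-1/2}$ by the local limit theorem (finite variance, nonlattice), any event of probability $o(n^{-1/2})$ for the i.i.d.\ sequence remains negligible after conditioning. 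The quantity $\frac1n\sum_i\xi_i^2$ is not invariant under this only if it fails the weak law strongly. A cleaner route is to condition on the first $n/2$ and last $n/2$ variables separately and use the local limit theorem for the other half, which bounds the conditional probability by $C$ times the unconditional one. This gives $\frac1n\sum\xi_i^2\to\E\xi^2$ in probability under the conditioning. This step is where I expect the only real work, though it is routine.

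Given this convergence in probability, take any subsequence. By passing to a further subsequence and using the Skorokhod representation (the space of sequences $(p_i)_i$ with the second moment is separable metrizable), we may assume the convergences hold almost surely. On that probability-one event, the deterministic sequence $\bn_n:=\bN_n(\omega)$ satisfies \refConds{Condition1} and~\ref{Cond2}, with limit $\bp$ nonlattice and $\gssp$ equal to the variance of $\bp$. Since $m_n\sim a n^{2/3}=a|\bn_n|^{2/3}$, \refT{TS} gives $\P\bigpar{N_{m_n}(\cT_{\bn_n})=j}\to e^{-\gl}\gl^j/j!$ for each $j$.

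Finally, by dominated convergence,
\begin{align*}
\P\bigpar{N_{m_n}(\cT_{\bp,n})=j}=\E\bigsqpar{\P\bigpar{N_{m_n}(\cT_{\bN_n})=j\mid\bN_n}}\to e^{-\gl}\gl^j/j!
\end{align*}
along the sub-subsequence. Every subsequence having such a further subsequence, \eqref{eq15GW} follows. The hypotheses $p_0>0$ and $p_i>0$ for some $i\ge2$ only guarantee nondegeneracy ($\gsp>0$), which nonlatticeness already implies.
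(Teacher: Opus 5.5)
Your proposal is correct and follows the paper's proof. You condition on the degree statistic via \eqref{gw0}, make \refConds{Condition1} and \ref{Cond2} hold almost surely by a Skorohod coupling, apply \refT{TS} to the resulting deterministic sequences, and conclude by dominated convergence; the only difference is that the paper cites \cite[Lemma 11]{Broutin2014} for the convergence in probability of the empirical degree distribution and its variance, whereas your direct half-conditioning local-limit argument is a valid self-contained substitute.
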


\begin{proof}
Let $\mathcal{P}(\mathbb{N}_{0})$ be the set of probability measures on $\mathbb{N}_{0}$ equipped with the weak topology. By \cite[Lemma 11]{Broutin2014}, we have that,  as $n \rightarrow \infty$,
\begin{align} \label{eq16GW}
\Big(\mathbf{p}(\mathbf{n}_{\mathcal{T}_{\mathbf{p},n}}), \sigma_{\mathbf{p}(\mathbf{n}_{\mathcal{T}_{\mathbf{p},n}})}^{2} \Big) \dto (\mathbf{p}, \sigma_{\mathbf{p}}^{2}), 
\end{align}
\noindent where the convergence holds in the space $\mathcal{P}(\mathbb{N}_{0}) \times \mathbb{R}$ equipped with the product topology.
By the Skorohod coupling theorem \cite[Theorem
4.30]{Kallenberg2002}, we can and will assume that the convergence in
\eqref{eq16GW} holds a.s.; in other words, \refCond{Condition1} and \refCond{Cond2}  hold a.s.\ 
for the degree statistics $\mathbf{n}_{\mathcal{T}_{\mathbf{p},n}}$, with $\bp$. Moreover, e.g.\ by resampling $\mathcal{T}_{\mathbf{p},n}$ conditioned on 
$\mathbf{n}_{\mathcal{T}_{\mathbf{p},n}}$, we may assume that 
conditioned on the sequence of degree statistics
$(\mathbf{n}_{\mathcal{T}_{\mathbf{p},n}})_{n=1}^\infty$,
the random trees $\cT_{\bp,n}$, $n\ge1$,  are independent. 

As noted above,
for any fixed degree statistic $\mathbf{n}$ with
$\mathbb{P}(\mathbf{n}_{\mathcal{T}_{\mathbf{p},n}} = \mathbf{n}) >0$,
we have \eqref{gw0}.
It follows that we may apply Theorem \ref{TS}
conditioned on the 
sequence of degree statistics
$(\mathbf{n}_{\mathcal{T}_{\mathbf{p},n}})_{n=1}^\infty$; 
this shows that (\ref{eq15GW}) holds
conditioned on
$(\mathbf{n}_{\mathcal{T}_{\mathbf{p},n}})_{n=1}^\infty$. Then, (\ref{eq15GW})
also holds unconditionally by the dominated convergence theorem. 
\end{proof}

The same argument can be used to give versions for the \cGWt{} $\cT_{\bp,n}$
of the Poisson convergence parts of \refTs{The01} and \ref{ThPo2}.
In principle, it should be possible to apply this method also to the parts
with normal convergence, but then we would have to also take into account the
random fluctuations of the degree statistic of the \cGWt{} $\cT_{\bp,n}$,
see \cite[Section 7]{SJ378} where this is done in detail for 
$N_T(\cT_{\bp,n})$ where $T$
is a fixed tree.
Instead we go back to the proofs above and use moment calculations there
to show the following result
which includes \cite[Theorem 1.3]{Cai2017}
(although without an explicit error bound)
and extends it to 
some cases where $\bp$ has infinite variance.

\begin{theorem}[Partly {\cite[Theorem 1.3]{Cai2017}}] \label{TheoGW1}
Let $\mathbf{p} := (p_{i})_{i \geq 0}$ be a probability distribution on
$\bbNo$ such that $p_{0} >0$ and $p_{i} >0$ for at least one $i \geq 2$. Let
$\mathcal{T}_{\mathbf{p},n}$ be a Galton--Watson tree 
with offspring distribution $\mathbf{p}$ conditioned to have
size $n \in \mathbb{N}$
(we only consider $n$ along values for which the conditioning is well
defined). 
Furthermore, suppose that the mean 
$\sum_{i \geq 0} i p_{i} = 1$ and that
$\mathbf{p}$ satisfies one of the following conditions: 
\begin{enumerate}[label=\upshape(\alph*)]
\item\label{case1GW}  
$\sigma^{2}_{\mathbf{p}} 
\coloneqq \sum_{i \geq 0}(i-1)^{2}p_{i} \in  (0,\infty)$. 
(I.e., the distribution $\bp$ has finite variance $\gss$.) 
\item $\sigma^{2}_{\mathbf{p}} = \infty$ and
  $\mathbf{p}$ belongs to the domain of attraction of a stable law
  of index $\alpha \in (1,2]$. 
(The last condition is equivalent to that
there exists a slowly varying function
  $L:\mathbb{R}_{+} \rightarrow \mathbb{R}_{+}$ such that $\sum_{i = 0}^{k}
  i^{2} p_{i} = k^{2-\alpha} L(k)$,
  as $k \rightarrow \infty$
\cite[Theorem XVII.5.2]{FellerII}.) \label{case2GW}
\end{enumerate}
\noindent Let $T_{n} \in \mathbb{T}$ be such that $m_{n} \coloneqq |T_{n}| \rightarrow \infty$. As $n \rightarrow \infty$, 
\begin{enumerate}[label=\upshape(\roman*)]
\item \label{ReGW1} 
If\/ $n \pi_{\mathbf{p}}(T_{n}) \rightarrow \lambda$, 
for some $\lambda \in [0,\infty)$, then 
\begin{align}  \label{ReGW1Conv}
N_{T_n}(\mathcal{T}_{\mathbf{p},n}) \dto\Po\bigpar{\gl}
\end{align} 
\noindent with convergence of all moments.
\item \label{ReGW2} If\/ $n \pi_{\mathbf{p}}(T_{n}) \rightarrow \infty$, then 
\begin{align} \label{ReGW2Conv}
\frac{N_{T_{n}}(\mathcal{T}_{\mathbf{p},n}) - n \pi_{\mathbf{p}}(T_{n})}{\sqrt{n \pi_{\mathbf{p}}(T_{n})}} \dto\N\bigpar{0,1}.
\end{align}
\end{enumerate}
\end{theorem}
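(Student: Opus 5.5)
We will condition on the degree statistic $\bn=\bn_{\cT_{\bp,n}}$, using \eqref{gw0}, and then go back to the moment computations \eqref{eq4G}--\eqref{eq11G} rather than to the statements of \refT{The1}. The reason is that the normal case needs a centering by $n\pi_\bp(T_n)$, not by $n\pi_{\bp(\bn)}(T_n)$. We will therefore compute unconditional factorial moments
\begin{align*}
\E(N_{T_n}(\cT_{\bp,n}))_{q}=\E\bigsqpar{\E\bigpar{(N_{T_n}(\cT_{\bn}))_q\mid\bn}}.
\end{align*}
Alternatively, we can use the exact formula for conditioned \GWt{s}, obtained by summing \eqref{eq4G} over $\bn$ weighted by $\P(\bn_{\cT_{\bp,n}}=\bn)$. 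By the standard cycle-lemma argument this gives
\begin{align*}
\E(N_{T}(\cT_{\bp,n}))_q=\frac{n\,\P(S_{n-qm+q-1}=n-qm-1)}{(n-qm+q-1)\P(S_n=n-1)}\,\pi_\bp(T)^q ,
\end{align*}
where $S_k$ is a sum of $k$ \iid{} $\bp$-variables and $m=|T|$. This is the route we will follow: it avoids controlling fluctuations of $\bn$ and gives moments directly.

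\textbf{Step 1.} As in \eqref{EqEx1}, we have $\max_i p_i<1$, since $p_0<1$ and $p_1<1$; the latter holds because some $p_i>0$ with $i\ge2$. Hence $\pi_\bp(T_n)\le(1-\gd)^{m_n}$. When $m_n\le n^{1/2-\eps}$, say, the ratio of local probabilities above is $1+o(1)$, uniformly for $q=O(\sqrt{n\pi_\bp(T_n)})$. This follows from the local limit theorem for $S_k$ in the domain of attraction of an $\ga$-stable law, with norming $a_k=k^{1/\ga}L^*(k)$: we get $\P(S_k=k-1+j)\sim g(j/a_k)/a_k$ uniformly, and here the shift $qm$ and the change $qm$ in $k$ are $o(a_n)$ and $o(n)$ respectively. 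We will restrict to the nonlattice case or handle the span $h$ as usual; this is a technical point only.

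\textbf{Step 2.} Consequently $\E(N_{T_n})_q=(n\pi_\bp(T_n))^q e^{o(1)}$ for bounded $q$. For $q$ of order $\gl_n\qq$ with $\gl_n:=n\pi_\bp(T_n)$, we need $qm_n=o(a_n)$. This holds because $q\le C\sqrt n(1-\gd)^{m_n/2}$ decays exponentially in $m_n$, so $qm_n=o(1)\cdot\sqrt n$ whenever $m_n\to\infty$, while $a_n\ge n^{1/2}$. We also need the factor $n/(n-qm+q-1)=\exp(O(qm/n))$. Then part \ref{ReGW1} for $\gl>0$ follows by the method of moments. Part \ref{ReGW2} follows from the Gao--Wormald theorem, exactly as in the proof of \refT{The1}, once we sharpen the error to $\exp(o(1))$ uniformly for $q=O(\gl_n\qq)$; the sharper form $1+o(1/\gl_n)$ for fixed $q$ is not needed for the distributional statement.

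\textbf{Step 3.} When $m_n>n^{1/2-\eps}$, we have $\gl_n\le n(1-\gd)^{m_n}\to0$, so we only need $\E(N_{T_n})_q\to0$. We will bound the local-probability ratio crudely by $O(n^{C})$, using $\P(S_n=n-1)\ge c/a_n\ge n^{-1}$ and the trivial bound $1$ for the numerator. This gives $\E(N_{T_n})_q\le n^{C}(1-\gd)^{qm_n}\to0$. The case $\gl=0$ with small $m_n$ is covered by Step 1.

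The main obstacle is the uniformity of the stable local limit theorem ratio in Step 1 under infinite variance. In particular, we must verify that the shifts $qm_n$ are $o(a_n)$ in the regime where $\gl_n\to\infty$; Steps 2 and 3 reduce this to the exponential smallness of $\pi_\bp(T_n)$.
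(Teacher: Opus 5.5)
\textbf{Gap: the displayed exact formula is wrong.} Your route is the paper's. You average \eqref{eq4G} over the degree statistic via \eqref{gw0}, turn this by the cycle lemma into an exact formula in local probabilities of $S_k$, and show the ratio of local probabilities is $e^{o(1)}$ by the local limit theorem in cases \ref{case1GW} and \ref{case2GW}. You then conclude by moments and Gao--Wormald, and dispose of $\gl=0$ with large $m_n$ by a crude bound. But the formula that carries the whole argument is incorrect. The correct one is the paper's \eqref{eq2GW}. To get it, sum \eqref{sw4} against $\prod_i p_i^{n(i)}$ over $\bn$ and substitute $\bn'=\bn-q\bn_T$. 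This gives a multinomial sum over $\bn'$ with $|\bn'|=n-qm$ and $\sum_i in'(i)=n-1-q(m-1)$. Dividing by $\P(|\cT_\bp|=n)=\frac1n\P(S_n=n-1)$ yields
\begin{align*}
\E(N_{T}(\cT_{\bp,n}))_q
=n\,(n-qm+q-1)_{q-1}\,\pi_\bp(T)^q\,
\frac{\P\bigpar{S_{n-qm}=n-1-q(m-1)}}{\P(S_n=n-1)}.
\end{align*}
Your version is smaller by a factor of order $n^q$, and it also has the wrong number of summands and the wrong target. Already for $q=1$ it gives $\E N_T(\cT_{\bp,n})\approx\pi_\bp(T)$ instead of $n\pi_\bp(T)$. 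So the conclusion $\E(N_{T_n})_q=(n\pi_\bp(T_n))^qe^{o(1)}$ in Step 2 does not follow from what you wrote, even though it is the right answer.

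\textbf{With the corrected formula, your argument goes through.} The combinatorial factor to control is $n(n-qm+q-1)_{q-1}=n^q\exp\bigpar{O(q^2m/n)}$, not $n/(n-qm+q-1)$. This is harmless for $q=O(\gl_n^{1/2})$, since $q^2m_n/n=O(m_n\pi_\bp(T_n))=o(1)$; the paper uses the cruder $q^2m_n^2/n=o(1)$, see \eqref{eq3GW}. The local limit theorem is applied to $n-qm_n=n-o(n)$ summands at distance $q-1=o(a_n)$ from the mean, and your bounds provide exactly this. A few smaller corrections:
\begin{itemize}
\item The local limit theorem gives $\P(S_k=k+j)=\frac{h}{a_k}\bigpar{g(j/a_k)+o(1)}$ with an additive error, not $\sim$ uniformly in $j$. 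You only need $j=o(a_k)$, where $g(0)>0$ turns it into a ratio statement.
\item For $\gl=0$ with small $m_n$, Step 1 must be invoked with fixed $q$, because the range $q=O(\gl_n^{1/2})$ is vacuous there.
\item Step 3 becomes $\E(N_{T_n})_q\le n^{q+1}(1-\gd)^{qm_n}\to0$. The paper gets the same by bounding the first moment and using $N^q\le n^{q-1}N$.
\end{itemize}
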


\begin{proof}
We use again \eqref{gw0}.
Hence, it follows from \cite[Lemma 3.3(i)]{SJ378},
see \eqref{eq4G}, that, for $q \in \mathbb{N}$ such that $n \geq qm_{n}-q+1$,
\begin{align} \label{eq1GW}
\E (N_{T_{n}}(\mathcal{T}_{\mathbf{p},n}))_{q} =  \frac{n}{(n)_{qm_{n}-q+1}} \mathbb{E} \Big[ \prod_{i\geq 0} (n_{\mathcal{T}_{\mathbf{p},n}}(i))_{qn_{T_{n}}(i)} \Big].
\end{align}

Let $\xi_{1}, \xi_{2}, \dots$, be a sequence of independent random variables with
distribution $\mathbf{p}$, and define the partial sums $S_{n} = \sum_{i
  =1}^{n} \xi_{i}$, $n \geq 0$. 
It follows from \eqref{eq1GW} together with the bijection 
$\gU:\bbT_n\leftrightarrow\bbD_n$ and 
the $n$-to-$1$ map $\Phi:\bbB_n\to\bbD_n$ discussed
in \refS{TreesandDegrees} that 
\begin{align}  \label{eq2GW}
\E (N_{T_{n}}(\mathcal{T}_{\mathbf{p},n}))_{q} =  n \prod_{i\geq 0} p_{i}^{qn_{T_{n}}(i)} \cdot \frac{(n)_{q m_{n}}}{(n)_{qm_{n}-q+1}}  \frac{\mathbb{P}(S_{n-qm_{n}} = n-1-q(m_{n}-1))}{\mathbb{P}(S_{n} = n-1)};
\end{align}
see e.g.\ \cite[(B.4)]{SJ378} (with $q_{in} = q n_{T_{n}}(i)$, for $i \geq 0$). 

We first prove \ref{ReGW1} for $\gl>0$ and \ref{ReGW2}. 
Since then $n \pi_{\mathbf{p}}(T_{n}) \rightarrow \lambda$, as $n
\rightarrow \infty$, for some $\lambda \in (0,\infty]$, we may consider $n$ large enough such that $\pi_{\mathbf{p}}(T_{n}) >0$. Let $q_{n} \in \mathbb{N}$ be such that 
\begin{align}\label{lc1}
q_{n}=O((n \pi_{\mathbf{p}}(T_{n}))^{1/2}). 
\end{align}
Note that then
\begin{align} \label{eq3GW}
\frac{q^{2}_{n}m_{n}^{2}}{n} = \frac{q^{2}_{n} m_{n}^{2}\pi_{\mathbf{p}}(T_{n})}{n\pi_{\mathbf{p}}(T_{n})} = O\Big(m_{n}^{2} \pi_{\mathbf{p}}(T_{n}) \Big) = O\Big(m_{n}^{2} \sup_{i \geq 0} p_{i}^{m_{n}}\Big) = o(1). 
\end{align}
\noindent This allows us to consider only $n$ large enough such that $n \geq
q_{n}m_{n}-q_{n}+1$.  Then, by \eqref{eq2GW},  \eqref{eq5Gb}, and \eqref{eq3GW},
\begin{align} \label{eq4GW}
\E (N_{T_{n}}(\mathcal{T}_{\mathbf{p},n}))_{q_{n}} & =  (n\pi_{\mathbf{p}}(T_{n}))^{q_{n}}  \frac{\mathbb{P}(S_{n-q_{n}m_{n}} = n-1-q_{n}(m_{n}-1))}{\mathbb{P}(S_{n} = n-1)}  \cdot \exp ( o(1)).
\end{align}
\noindent Next, we use a suitable local limit theorem for the sums $(S_n, n \geq 0)$. We consider the cases \ref{case1GW} and \ref{case2GW} separately.

\pfitemref{case1GW}
In this case, the distribution $\mathbf{p}=(p_{i})_{i \geq 0}$ has mean $1$ and finite variance $\sigma_{\mathbf{p}}^{2} >0$. Let $h \geq  1$ be the span of this distribution, i.e., the largest integer such that $\xi_{1}$ a.s. is a multiple of $h$. The standard local central limit theorem, see e.g.\ \cite[Theorem 7.1]{Petrov}, yields, recalling \eqref{eq3GW}, 
\begin{align} 
\mathbb{P}(S_{n-q_{n}m_{n}} = n-1-q_{n}(m_{n}-1)) & 
= \frac{h}{\sqrt{2 \pi \sigma_{\mathbf{p}}^{2} n}} 
\exp \Big(-\frac{(q_{n}-1)^2}{2\sigma^{2}_{\mathbf{p}}(n-q_nm_n)} +o(1) \Big) 
\nonumber \\
& = \frac{h}{\sqrt{2 \pi \sigma_{\mathbf{p}}^{2} n}} \exp(o(1)), \label{eq5GW}  \\ 
\mathbb{P}(S_{n} = n-1) & = \frac{h}{\sqrt{2 \pi \sigma_{\mathbf{p}}^{2} n}} \exp(o(1)). \label{eq6GW}
\end{align}
\noindent Hence, \eqref{eq4GW} yields,
\begin{align} 
\E (N_{T_{n}}(\mathcal{T}_{\mathbf{p},n}))_{q_{n}}  =  (n\pi_{\mathbf{p}}(T_{n}))^{q_{n}} \cdot  \exp( o(1)). \label{eq7GW}
\end{align}

In case \ref{ReGW1}, 
the condition \eqref{lc1} says $q=O(1)$.
Hence, for every fixed $q \in \mathbb{N}$, \eqref{eq7GW} applies and yields
\begin{align} \label{eq8GW}
\E (N_{T_{n}}(\mathcal{T}_{\mathbf{p},n}))_{q} \sim (n\pi_{\mathbf{p}}(T_{n}))^{q},
\end{align}
\noindent as $n \rightarrow \infty$; 
consequently, \eqref{ReGW1Conv} follows by the methods of moments. Note also
that \eqref{eq8GW} implies the convergence of all moments of
$N_{T_{n}}(\mathcal{T}_{\mathbf{p},n})$ as claimed in \ref{ReGW1}. 

In case \ref{ReGW2}, it follows from \eqref{eq7GW} that, for $q_{n}=O((n \pi_{\mathbf{p}}(T_{n}))^{1/2})$, 
\begin{align}  \label{eq9GW}
\E (N_{T_{n}}(\mathcal{T}_{\mathbf{p},n}))_{q_{n}} =  (n\pi_{\mathbf{p}}(T_{n}))^{q_{n}} \cdot \exp \Big(\frac{n\pi_{\mathbf{p}}(T_{n})  - n\pi_{\mathbf{p}}(T_{n})}{2 n^{2}\pi_{\mathbf{p}}(T_{n})^{2}}q^{2}_{n} + o(1)\Big).
\end{align}
\noindent Hence, \eqref{ReGW2Conv} follows by applying the Gao--Wormald theorem \cite[Theorem 1]{GW2004} (or \cite[Theorem
A.1]{SJ378} with $m=1$) with $\mu_{n} = \sigma_{n}^{2}  = n\pi_{\mathbf{p}}(T_{n})$. 

\pfitemref{case2GW}
This is similar. 
By assumption, there exist sequences of constants $a_n>0$
and $b_n$ such that $(S_n-b_n)/a_n$ converges in distribution to some stable
random variable $Y$ of index $\alpha \in (1,2]$; since $\ga>1$, we may here
choose $b_n=\E S_n=n$
\cite[Theorem XVII.5.3]{FellerII}.
Moreover, it follows from \cite[XVII.(5.24)]{FellerII} that the sequence
$a_n$ is regularly varying with exponent $1/\ga$, and in particular that
that if $n'(n)=n+o(n)$, then $a_{n'(n)}\sim a_n$.

Let $g_Y(x)$ be the
density function of the stable limit $Y$, and note that $g_Y(x)$ is
continuous and that
$g_Y(x)>0$ for every $x\in\bbR$
(see e.g.\ \cite[Remark 4 after Theorem 2.2.3, pp.\ 79--80]{Zolotarev1986}).
Since we assume that the variance of $\xi_1$ is
infinite, we have $a_n\gg n\qq$ (see \cite[XVII.(5.23)]{FellerII}). 
Let again $h$ be the span of the distribution $\mathbf{p}$, and note that
$\cT_{\bp,n}$ exists only if $n\equiv1\pmod h$, and that $\pi_\bp(T_n)>0$
implies $m_n=|T_n|\equiv1\pmod h$; we may thus assume these congruences.
Then, by
recalling \eqref{eq3GW}, the local limit theorem 
(see \cite[\S\ 50]{GneKol}) yields,
with $n'(n):=n-q_nm_n=n-o(n)$,  
\begin{align}
\mathbb{P}(S_{n-q_{n}m_{n}} = n-1-q_{n}(m_{n}-1)) 
& = \frac{h}{a_{n'(n)}} \Bigpar{g_{Y}\Bigpar{\frac{q_n-1}{a_{n'(n)}}}+o(1)} 
= \frac{h}{a_{n}} g_{Y}(0) \exp(o(1)), \label{eq10GW}\\
\mathbb{P}(S_{n} = n-1) & =\frac{h}{a_{n}} g_{Y}(0) \exp(o(1)). \label{eq11GW}
\end{align}
Hence, 
\eqref{eq4GW} yields \eqref{eq7GW} in this case too. Then, the proof of \ref{ReGW1} and \ref{ReGW2} in case
\ref{case2GW} is completed as above. 

Finally, we prove the case $\gl=0$ of \ref{ReGW1}. 
If $m_n=o(\sqrt n)$, then \eqref{eq3GW} holds for $q_n=q$ fixed, and thus
\eqref{eq8GW} holds by the proof above, which yields the conclusion.

It remains to consider large $m_n$, say $m_n\ge n^{1/3}$. 
We consider again only 
$n\equiv 1\pmod h$, since otherwise $\cT_{\bp,n}$ does not exist.
By the local limit theorem \eqref{eq6GW} or \eqref{eq11GW},
it follows that 
(with $a_n=n\qq$ in case \ref{case1GW}),
for some $c>0$,
\begin{align}\label{lc2}
  \P(S_n=n-1) \sim \frac{c}{a_n}. 
\end{align}
Since $a_n$ is regularly varying with exponent $1/\ga<1$, 
\eqref{lc2} implies that
for large $n$,
\begin{align}\label{lc3}
\P(S_n=n-1) > n^{-1}.  
\end{align}
It follows from \eqref{eq2GW}
(with $q=1$), together with \eqref{lc3}, that
\begin{align}  
\E N_{T_{n}}(\mathcal{T}_{\mathbf{p},n}) 
=  n\pi_{\mathbf{p}}(T_{n}) 
 \frac{\mathbb{P}(S_{n-m_{n}} = n-m_{n})}{\mathbb{P}(S_{n} = n-1)} 
\le n^2 \pi_{\mathbf{p}}(T_{n})
\le n^2 (\max_i p_i)^{m_n}
\to0,
\end{align}
since  $\max_i p_i<1$ and we assumed $m_n\ge n^{1/3}$.
We immediately obtain also convergence of higher moments:
\begin{align}  
\E \bigsqpar{N_{T_{n}}(\mathcal{T}_{\mathbf{p},n})^q} 
\le n^{q-1}\E \bigsqpar{N_{T_{n}}(\mathcal{T}_{\mathbf{p},n})} 
\le n^{q+1} (\max_i p_i)^{m_n}
\le n^{q+1} (\max_i p_i)^{n^{1/3}}
\to0.
\end{align}
\end{proof}

We can from \refT{ThPo2} obtain similar results for $N_\bmk(\cT_{\bp,n})$,
the number of fringe trees with a given degree statistic $\bmk$, but we
leave the details to the reader.

\appendix
\section{A local limit theorem for drawing without replacement}\label{ALLT}

We used above a local limit theorem for the sum of a number of values
obtained by drawing without replacement.
We consider here the situation in somewhat greater generality.
Let $\bd=(d_1,\dots,d_n)$ be a given sequence of real numbers, and let $S_m$ 
($0\le m\le n$) 
be the sum of $m$ of them, obtained by drawing without replacement.
Formally, we may take a uniformly random permutation $\tau$ of $\setn$
and define
\begin{align}\label{Sm}
S_m=S_m(\bd):=\sum_{i=1}^m d_{\tau(i)}.
\end{align}

It is well known that for large $m$ and $n-m$, under suitable conditions on
$\bd$, the distribution of $S_m$ is asymptotically normal.
To state the result formally, we suppose that we are given a sequence
$\bd_\kk=(d_{i\kk})_{i=1}^{n_\kk}$, $\kk\ge1$, of sequences of real numbers, 
where $(n_\kk)_{\kk \geq 1}$ is some sequence of natural numbers;
we also suppose that $(m_\kk)_{\kk \geq 1}$ is another given sequence with $0\le m_\kk\le
n_\kk$ for every $\kk \geq 1$, and we consider the random sums
$\Smk=\Smk(\bd_\kk)$. 
(We simplify the notation, since there is no risk of confusion.)
We define 
\begin{align}\label{apa}
  \bxd_\kk&:=\frac{1}{n_\kk}\sum_{i=1}^{n_k} d_{i\kk},
\\
Q_\kk&:=\sum_{i=1}^{n_k}(d_{i\kk}-\bxd_\kk)^2,\label{apb}
\\
\hgssk&:=\frac{m_\kk}{n_\kk}\lrpar{1-\frac{m_\kk}{n_\kk}}Q_\kk.\label{apc}
\end{align}
It is easily seen that
\begin{align}\label{apd}
  \E \Smk = m_\kk\bxd_\kk
\qquad\text{and}\qquad  
\Var\Smk = \frac{m_\kk(n_\kk-m_\kk)}{n_\kk(n_\kk-1)}Q_\kk
=\frac{n_\kk}{n_\kk-1}\hgssk.
\end{align}
(We use $\hgssk$ as a convenient approximation to $\Var\Smk$; in the
asymptotic results below it does not matter which one we use.)
Note also that
if $D_\kk$ is a random element of the sequence $\bd_\kk$, then 
\begin{align}\label{ape}
\E D_\kk=\bxd_\kk
\qquad\text{and}\qquad
\Var D_\kk=Q_\kk/n_\kk
.\end{align}
(This is the case $m_\kk=1$ of \eqref{apd}.)

To avoid trivialities, we assume $Q_\kk>0$; otherwise, $d_{i\kk}=\bxd_\kk$
for all $i$, and $\Smk$ is non-random.
We assume the following Lindeberg type condition:
\begin{align} \label{Lindeberg}
\frac{1}{Q_\kk}\sum_{|d_{i\kk}-\bxd_\kk|>\eps\hgsk}(d_{i\kk}-\bxd_\kk)^2\to0
\qquad \text{as $\kktoo$, for every $\eps>0$}.
\end{align}
Then the following central limit theorem was proved by \citet{ER1959},
see also \citet{Hajek} for another (simpler) proof (and proof of necessity
of \eqref{Lindeberg}), and \citet{Hajek-rejective}
 and \citet{Jirina} for  generalizations.

\begin{theorem}[\cite{ER1959}]\label{TA1}
With notations as above, if \eqref{Lindeberg} holds, then, as \kktoo,
\begin{align}\label{ta1}
  \frac{\Smk-\E\Smk}{\hgsk}
=   \frac{\Smk-m_\kk\bxd_\kk}{\hgsk}
\dto \N(0,1).
\end{align}
\end{theorem}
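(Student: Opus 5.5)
The plan is to follow Hájek's coupling proof. We compare $\Smk$ with a sum of \emph{independent} terms obtained by Bernoulli sampling, apply the ordinary Lindeberg--Feller theorem to that sum, and show that the coupling error is negligible in $L^2$ relative to $\hgsk$. First we make three reductions. Replacing $d_{i\kk}$ by $d_{i\kk}-\bxd_\kk$ changes neither $Q_\kk$, $\hgsk$ nor \eqref{Lindeberg}, and shifts $\Smk$ by $m_\kk\bxd_\kk$, so we may assume $\bxd_\kk=0$. Since $S_{m}=\sum_i d_{i}-S'_{n-m}$, where $S'_{n-m}$ is the sum of the complementary draws (with the same distribution as $S_{n-m}$), and both $\hgsk$ and \eqref{Lindeberg} are symmetric under $m\leftrightarrow n-m$, we may also assume $m_\kk\le n_\kk/2$. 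Write $p=p_\kk:=m_\kk/n_\kk$, so that $\hgssk=p(1-p)Q_\kk$.

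Next we show that \eqref{Lindeberg} forces $n_\kk p(1-p)\to\infty$. The terms with $|d_{i\kk}|\le\eps\hgsk$ contribute at most $n_\kk\eps^2\hgssk=\eps^2 n_\kk p(1-p)Q_\kk$ to $Q_\kk$. If $n_\kk p(1-p)\le C$ along a subsequence, then choosing $\eps$ with $C\eps^2<1/2$ shows that at least half of $Q_\kk$ comes from terms with $|d_{i\kk}|>\eps\hgsk$, which contradicts \eqref{Lindeberg}.

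Then we build the coupling. Let $I_1,\dots,I_{n}$ be i.i.d.\ $\Be(p)$, let $K:=\sum_iI_i$, and let $T:=\sum_i d_iI_i=\sum_i d_i(I_i-p)$, using $\bxd=0$. $T$ is a sum of independent centred variables with $\Var T=\hgssk$. The Lindeberg condition for $T$ follows directly from \eqref{Lindeberg}, since $|d_i(I_i-p)|\le|d_i|$ and $\E[d_i^2(I_i-p)^2]\le p(1-p)d_i^2\cdot\max(p,1-p)/\min(p,1-p)$; a cleaner route is to use $\E[d_i^2(I_i-p)^2\indic{|d_i|>\eps\hgsk}]=p(1-p)d_i^2\indic{\cdots}$. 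Hence $T/\hgsk\dto\N(0,1)$. Now, given the random set $\set{i:I_i=1}$, if $K>m$ we delete $K-m$ uniformly chosen selected indices, and if $K<m$ we add $m-K$ uniformly chosen unselected ones. The resulting set is a uniform $m$-subset, so its sum has the law of $\Smk$, and $\Smk-T$ is (up to sign) the sum of the $|K-m|$ moved values.

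The main step is the estimate $\E(\Smk-T)^2=o(\hgssk)$. Conditional on the selected set, the moved values form a sample without replacement of size $|K-m|$ from a set $A$ (the selected or the unselected indices). The conditional second moment of their sum is at most $|K-m|\sum_{i\in A}d_i^2/|A|$ plus the squared conditional mean $\bigpar{|K-m|\sum_{i\in A}d_i/|A|}^2$, and we note that $\sum_{i\in A}d_i=\pm T$ because $\bxd=0$. On the event $\set{|K-np|\le np/2}$ we have $|A|\ge cnp$, which uses $p\le1/2$. Taking expectations, and using $\E|K-m|\le(np(1-p))\qq$, $\E[\sum_{i\in A}d_i^2\mid\cdots]\le Q$ suitably averaged, and the approximate independence of $K$ and $T^2$ (handled by Cauchy--Schwarz with fourth moments of $K-m$, which are $O((np)^2)$), this gives
\[
\E(\Smk-T)^2\le C\Bigl(\frac{(np)\qq}{np}\,Q+\frac{np}{(np)^2}\,\hgssk\Bigr)=O\bigl((np)^{-1/2}\bigr)\,\hgssk .
\]
The complementary event has exponentially small probability and is bounded trivially using $|\Smk-T|^2\le 2n\cdot Q$ or a similar crude bound. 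Since $np\asymp np(1-p)\to\infty$, the error is $o(\hgssk)$, and Slutsky's lemma yields \eqref{ta1}. I expect the main obstacle to be this $L^2$ bound: in particular, controlling the conditional-mean term, which involves $T$ itself, uniformly, and treating the atypical values of $K$ rigorously.
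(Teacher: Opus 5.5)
Your overall strategy is sound. The paper gives no proof of \refT{TA1}: it quotes Erd\H{o}s--R\'enyi, whose argument goes through characteristic functions and the conditioning identity behind \eqref{aa3}, and mentions H\'ajek's simpler proof. A H\'ajek-style coupling with Bernoulli sampling is a legitimate route. Your reductions, the deduction $np(1-p)\to\infty$ from \eqref{Lindeberg}, the Lindeberg check for $T$, the coupling, and the conditional bound $\E[R^2\mid I]\le \frac{r}{|A|}\sum_{i\in A}d_i^2+\frac{r^2}{|A|^2}T^2$ are all correct. Here $r=|K-m|$ and $R$ is the sum of the moved values.

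The gap is in the key $L^2$ estimate, which as written is not $o(\hat\sigma^2)$ when $p\to0$. Your first term is $(np)^{-1/2}Q$. Since $Q=\hat\sigma^2/(p(1-p))$, this equals $\hat\sigma^2/\bigl((1-p)\sqrt{np^3}\bigr)\asymp (n/m^{3/2})\,\hat\sigma^2$, not $O\bigl((np)^{-1/2}\bigr)\hat\sigma^2$. It is $o(\hat\sigma^2)$ only when $m\gg n^{2/3}$. So the argument breaks down for $m_\kk=O(n_\kk^{2/3})$, which is exactly the regime in which the paper uses \refT{TA1} through \refT{TLLT} (\refL{LM1}, \refT{TS}).

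The loss comes from bounding $\sum_{i\in A}d_i^2$ by $Q$ when $A$ is the selected set, of size about $np$. There $\sum_{i\in A}d_i^2=\sum_i d_i^2I_i$ is typically only $pQ$. You need to handle its correlation with $r$ explicitly: $\E\bigl[|K-m|\sum_i d_i^2I_i\bigr]=\sum_i d_i^2\,p\,\E\bigl[|K-m|\bigm| I_i=1\bigr]\le pQ(\sqrt{np}+1)$. With $|A|=K>m$, this gives a first term $O(Q\sqrt{np}/n)=O(\hat\sigma^2/\sqrt{np})$. When $A$ is the unselected set, $|A|\ge n/2$ and the crude bound $\sum_{i\in A}d_i^2\le Q$ is harmless.

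Two further steps also fail under \eqref{Lindeberg} alone.

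First, Cauchy--Schwarz for $\E[(K-m)^2T^2]$ needs $\E T^4=O(\hat\sigma^4)$. But $\E T^4\ge\frac14p(1-p)\sum_i d_i^4$, and for $p\to0$ the condition \eqref{Lindeberg} permits a value with $\hat\sigma^2\ll d_i^2=o(Q)$. For example, take $n$ values $\pm1$ plus one value $d$ with $d^2=n^{0.9}$, and $m=n^{0.2}$. Then your Cauchy--Schwarz bound for the second term is of order $n^{0.3}$, while $\hat\sigma^2\asymp n^{0.2}$. Your intuition of approximate independence of $K$ and $T^2$ is right, but verify it by direct expansion. With $v=p(1-p)$ and $\sum_i d_i=0$ one gets $\E[(K-m)^2T^2]=Qv\bigl(1+(n-6)v\bigr)\le\hat\sigma^2(1+np)$, so the second term is $O(\hat\sigma^2/(np))$.

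Second, a crude bound such as $nQ$ times $\P(|K-np|>np/2)\le e^{-cnp}$ is $o(\hat\sigma^2)=o(pQ)$ only if $np\ge C\log n$ for a suitable $C$. Condition \eqref{Lindeberg} gives only $np\to\infty$; take $d_i=\pm1$ and $m\to\infty$ slowly. The split is unnecessary, though: for $m\le n/2$ one always has $|A|\ge m$, since either $|A|=K>m$ or $|A|=n-K>n-m\ge m$.

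With these repairs, $\E(\Smk-T)^2=O(\hat\sigma^2/\sqrt{np})=o(\hat\sigma^2)$, and the rest of your argument goes through.
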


Further known results are
a functional limit theorem \cite{Rosen} and
Berry--Esseen estimates on the rate of convergence 
\cite{Hoglund1976,Hoglund1978}.
Our purpose is to show a corresponding local limit theorem,
in the case when all $d_i$ are integers.
We guess that also such a local limit theorem has been proved earlier, but
we have 
failed to find any reference except \cite{Benedicks-exjobb}, which we have
been unable to obtain, and we therefore give a rather general theorem
with a detailed proof.

Given a sequence $\bd=(d_1,\dots,d_n)$ of integers,
we define
$|\bd|:=n$ and,
in analogy with \refSS{SStrees}, the  statistic
$\bn_\bd=(n_\bd(i))_{i\in\bbZ}$, where
\begin{align}\label{ac7}
  n_\bd(k):=\numset{1\le i\le |\bd|:d_i=k}
.\end{align}
We define also
\begin{align}\label{ac8}
  p_k(\bd):=\frac{n_\bd(k)}{|\bd|},\qquad k\in\bbZ;
\end{align}
thus $(p_k(\bd))_{k\in\bbZ}$ is a probability distribution, namely,
the empirical distribution of $(d_i)_{i=1}^n$.
We assume a condition similar to (and slightly weaker than) 
\refConds{Condition1} and \ref{Cond2}:
\begin{condition} \label{Cond1Z}
$\bd_{\kappa} = (d_{i\kappa})_{i=1}^{n_\kk}$, $\kk\ge1$, 
are sequences  such that, 
as $\kappa \rightarrow \infty$:
\begin{enumerate}[label=\upshape(\roman*)]
\item\label{Cond1Za} 
$|\bd_{\kappa}| \to \infty$, 
\item \label{Cond1Zb}
For every $k\in\bbZ$, we have
$p_{k}(\bd_{\kappa}) \rightarrow p_{k}$, 
where $\bp := (p_{k})_{k\in\bbZ}$ is a 
probability distribution on $\bbZ$.
\item \label{Cond1Zc}
$Q_\kk=O(n_\kk)$.
In other words, if $D_\kk$ is a random element of the sequence $\bd_\kk$,
then $\Var D_\kk=O(1)$.
\end{enumerate} 
\end{condition}

We can now state our local limit theorem.

\begin{theorem}\label{TLLT}
  Let $\bd_\kk$ ($\kk\ge1$) be (finite) integer sequences such that,
with notations as above,
\refCond{Cond1Z} holds with limit $\bp$ nonlattice,
and also the Lindeberg condition \eqref{Lindeberg} holds.
Then, as $\kktoo$, uniformly for all $k\in\bbZ$,
\begin{align}\label{tllt}
  \P(\Smk=k)
=\frac{1}{\sqrt{2\pi\hgssk}}
\lrpar{ \exp\lrpar{-\frac{(k-m_\kk\bxd_\kk)^2}{2\hgssk}}+o(1)}
.\end{align}
\end{theorem}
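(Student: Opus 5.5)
We prove \refT{TLLT} by Fourier inversion, using a representation of sampling without replacement as independent Bernoulli sampling conditioned on the sample size. Write $n=n_\kk$, $m=m_\kk$, $\bxd=\bxd_\kk$, $\hgs=\hgsk$. By \eqref{apc}, $\hgssk\to\infty$ is implied by the Lindeberg condition \eqref{Lindeberg}: if $\hgsk$ stayed bounded along a subsequence, then since $Q_\kk>0$ and the $d_{i\kk}$ are integers, some terms have $|d_{i\kk}-\bxd_\kk|\ge c>0$ bounded away from zero relative to $\hgsk$, and \eqref{Lindeberg} fails. By symmetry ($S_m$ and $S_{n-m}$ determine each other) we may assume $m\le n/2$, so that $\hgssk\asymp mQ_\kk/n$.

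\emph{Representation.} Take independent $\xi_i\simx\Be(\theta)$ with $\theta:=m/n$, and set $N:=\sum_i\xi_i$ and $U:=\sum_i\xi_i d_i$. Conditionally on $N=m$, the variable $U$ has the law of $S_m$. Hence
\begin{align*}
\P(S_m=k)=\frac{1}{\P(N=m)}\,\frac{1}{(2\pi)^2}\intpipi\intpipi
\prod_{j=1}^n\bigpar{1-\theta+\theta e^{\ii(s+td_j)}}\,e^{-\ii(sm+tk)}\dd s\dd t .
\end{align*}
The denominator satisfies $\P(N=m)\sim(2\pi m(1-\theta))\qqw$ by the binomial local limit theorem. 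The plan is to show that the double integral equals its Gaussian approximation, namely the bivariate normal density for $(N,U-\bxd N)$ evaluated at $(m,k-m\bxd)$, up to an error $o\bigpar{(m(1-\theta))\qqw\hgs\qqw}$. Before doing so, substitute $s\mapsto s-t\bxd$ to centre the $d_j$; this shift preserves the integral by periodicity in $s$, although $\bxd$ need not be an integer, so the domain must be handled with some care.

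\emph{Main region.} On $|s|\le\eps\sqrt{m}$ and $|t|\le A/\hgs$, rescale to $s=u/\sqrt{m(1-\theta)}$ and $t=v/\hgs$. A Taylor expansion of $\log(1-\theta+\theta e^{\ii x})$, controlled by the Lindeberg condition, shows that the log of the product converges to $-\tfrac12(u^2+v^2)$. The cross term vanishes after centring. Dominated convergence then gives the Gaussian main term, and dividing by $\P(N=m)$ yields \eqref{tllt}; the $o(1)$ is uniform in $k$ because the error is bounded in absolute value independently of $k$. The large-$s$ region for small $t$ is handled by the usual bound $|1-\theta+\theta e^{\ii x}|^2=1-2\theta(1-\theta)(1-\cos x)$.

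\emph{Main obstacle: $A/\hgs\le|t|\le\pi$.} Here we need $\int\!\!\int|\prod_j\cdots|\,\dd s\dd t=o\bigpar{(m\hgs^2)\qqw}$. We have
\[
\Bigabs{\prod_j}\le\exp\Bigpar{-c\,\theta\sum_j\bigpar{1-\cos(s+td_j)}}.
\]
We use nonlatticeness of $\bp$: choose finitely many values $a,b$ in the support of $\bp$ with $\gcd$ of the differences $b-a$ equal to $1$. By \refCond{Cond1Z}\ref{Cond1Zb}, each such value occurs at least $cn$ times. Since $1-\cos(x)+1-\cos(y)\ge c\,\bigl(1-\cos(x-y)\bigr)$, we get
\[
\sum_j\bigpar{1-\cos(s+td_j)}\ge cn\,\min_{\pm}\ \max_{(a,b)}\bigpar{1-\cos(t(b-a))},
\]
which is $\ge cn\min(t^2,\delta)$ uniformly in $s$. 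On $\delta_0\le|t|\le\pi$ this gives a factor $e^{-cm}$, which is negligible. On $A/\hgs\le|t|\le\delta_0$ it gives a factor $e^{-cmt^2}$. Its integral is small provided $\hgs^2=O(m)$, which follows from \refCond{Cond1Z}\ref{Cond1Zc}, and then the integral is $\le C e^{-cA^2}/\hgs$ before letting $A\to\infty$. Integrating in $s$ also requires a factor $(m)\qqw$, obtained by bounding the $s$-integral via the $s$-dependence of the terms with $d_j=a$ alone, which behave like a binomial characteristic function. Making this two-variable bound rigorous uniformly is the delicate step. \qedsymbol
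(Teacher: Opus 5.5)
Your route is sound, but it differs from the paper's. You aim for a full bivariate local Gaussian approximation of $(N,U)$. You re-derive the central behaviour by a two-variable Taylor/Lindeberg expansion and then divide by the exact binomial asymptotics of $\P(N=m)$.

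The paper uses the Bernoulli representation only to prove the a priori bound of \refL{L1}, $|\gf_{\Smk}(t)|\le Ce^{-cm_\kk(1-m_\kk/n_\kk)t^2}$ for $|t|\le\pi$. This comes from integrating $|\gf_Z(s,t)|$ over $s$ and needs only the crude bound $\P(X=m)\ge cm\qqw$. The pointwise convergence $\gf_{\Yk}(t)\to e^{-t^2/2}$ is quoted from the Erd\H{o}s--R\'enyi theorem (\refT{TA1}). A single one-dimensional Fourier inversion with dominated convergence then finishes, with dominating function $Ce^{-cn_\kk t^2/Q_\kk}\le Ce^{-ct^2}$ from \refL{L1} and \refCond{Cond1Z}\ref{Cond1Zc}. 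No splitting of the $t$-range is needed.

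So the paper's proof is shorter and modular. Yours is self-contained, since it essentially reproves \refT{TA1} locally, at the cost of the two-variable expansion and the three-region bookkeeping. Both use nonlatticeness, and $\hgssk=O(m)$ (equivalently $Q_\kk=O(n_\kk)$), in the same way.

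The step you call delicate is in fact immediate. All terms $1-\cos(s+td_j)$ are nonnegative, so split the contribution of the finitely many support values you selected into two halves. Use one half for your $s$-uniform lower bound $\ge cnt^2$, and the other half only through the single value $a$. This gives $|\prod_j\cdots|\le e^{-cmt^2}e^{-cm(1-\cos(s+ta))}$, and by periodicity $\intpipi e^{-cm(1-\cos(s+ta))}\dd s\le Cm\qqw$. The paper gets the same effect more symmetrically by proving $g(s,t)\ge c(s^2+t^2)$ on $[-\pi,\pi]^2$, via positive definiteness of the quadratic form near the origin plus compactness elsewhere.

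A few slips need fixing:
\begin{itemize}
\item The error you need from the double integral is $o\bigl((m(1-\theta))\qqw\hgsk\qw\bigr)$, not with $\hgsk\qqw$.
\item The main region should be $|s|\le\eps$, i.e.\ $|u|\le\eps\sqrt{m(1-\theta)}$, not $|s|\le\eps\sqrt m$.
\item Your argument that $\hgsk\to\infty$ needs one more observation. Since $\sum_j(d_j-\bxd)=0$, the indices with $d_j$ different from the integer nearest $\bxd$ carry at least half of $Q_\kk$. Hence the ratio in \eqref{Lindeberg} is $\ge\frac12$ once $\eps\hgsk<\frac12$. The paper instead deduces $\hgsk\to\infty$ from \refT{TA1} and the integrality of $\Smk$.
\item The fact $m\to\infty$, which your binomial local limit theorem needs, follows from $\hgssk=O(m)$.
\end{itemize}
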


The proof uses the standard method of Fourier inversion using a global limit
theorem (\refT{TA1}) combined with an estimate of the characteristic
function away from 0 (and multiples of $2\pi$).
Note that both the proof of \refT{TA1} in \cite{ER1959} and the
Berry--Esseen estimates in \cite{Hoglund1976,Hoglund1978} are based on
estimates of the characteristic function, but they need only estimates for
close to 0 (on the other hand, they need more precise estimates there).
Note also that the assumption that $\bp$ is nonlattice is necessary,
since \eqref{tllt} cannot hold without modification if, for example, all
$d_{i\kk}$ are even.
We prove first a lemma with the required estimate 
of the characteristic function $\gf_{\Smk}(t)$ of $\Smk$.

\begin{lemma}\label{L1}
Suppose that \refCond{Cond1Z}\ref{Cond1Zb}  holds, and that $\bp$ is nonlattice.
Then there exist  constants $C>0$ and $c>0$ such that, for all $\kk$
and all $m_\kk$ with $0\le m_\kk\le n_\kk$,
\begin{align}\label{l1}
  |\gf_{\Smk}(t)| \le C e^{-c m_\kk (1-m_\kk/n_\kk)t^2},
\qquad |t|\le\pi.
\end{align}
\end{lemma}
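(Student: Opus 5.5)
The plan is to use the classical \emph{pair-swapping} (symmetrization) argument for sampling without replacement. Several reductions come first. Since $S_{n_\kk-m_\kk}=\sum_i d_{i\kk}-\Smk$ in distribution (complementary sample), $|\gf_{S_{n_\kk-m_\kk}}(t)|=|\gf_{\Smk}(t)|$, and the exponent $m_\kk(1-m_\kk/n_\kk)$ is symmetric under $m_\kk\mapsto n_\kk-m_\kk$. We may therefore assume $m_\kk\le n_\kk/2$, so that the exponent is comparable to $m_\kk$.

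Next, since $\bp$ is nonlattice, we fix a finite set of support points $k_1,\dots,k_s$ with $p_{k_j}>0$ and $\gcd\set{k_j-k_1}=1$. Let $F:=\set{k_j-k_1:2\le j\le s}$. For all large $\kk$ we have $n_{\bd_\kk}(k_j)\ge \frac12 p_{k_j}n_\kk$ by \refCond{Cond1Z}\ref{Cond1Zb}. The finitely many remaining $\kk$ are harmless: for them $m_\kk(1-m_\kk/n_\kk)\le n_\kk$ is bounded, so \eqref{l1} holds trivially with $C$ large, since $|\gf|\le1$. Similarly, if $m_\kk$ is bounded the claim is trivial, so we may take $m_\kk$ large.

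For large $\kk$ we choose $N\ge c_1n_\kk$ disjoint pairs of indices $\set{a_r,b_r}$. For each $\gd=k_j-k_1\in F$, roughly $N/|F|$ of them are of type $\gd$, meaning $d_{a_r\kk}=k_j$ and $d_{b_r\kk}=k_1$. This is possible since each value $k_j$ occurs a positive proportion of times.

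Now condition on $\cA$, the set of indices drawn outside the union of the pairs, together with the information of which pairs are \emph{split}, i.e.\ exactly one element drawn. By exchangeability of the uniform random subset, given this information the choice of which element of each split pair is drawn is uniform and independent across pairs. Hence $\Smk=R+\sum_{r\text{ split}}\eta_r$ with $R$ measurable with respect to the conditioning and $\eta_r$ independent uniform on $\set{d_{a_r\kk},d_{b_r\kk}}$. This gives
\begin{align*}
|\gf_{\Smk}(t)|\le \E\prod_{\gd\in F}\bigabs{\cos(t\gd/2)}^{X_\gd},
\end{align*}
where $X_\gd$ is the number of split pairs of type $\gd$.

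Two estimates finish the argument. First, since $\gcd F=1$, $\max_{\gd\in F}|\cos(t\gd/2)|<1$ for $0<|t|\le\pi$, with $1-\max_\gd|\cos(t\gd/2)|\ge c_2t^2$ near $0$; by compactness, $\max_{\gd\in F}|\cos(t\gd/2)|\le e^{-c_3t^2}$ on $[-\pi,\pi]$. Hence the product is at most $\exp(-c_3t^2\min_\gd X_\gd)$. Second, each pair is split with probability $2m_\kk(n_\kk-m_\kk)/(n_\kk(n_\kk-1))\ge m_\kk/n_\kk$, so $\mu:=\E X_\gd\ge c_4m_\kk$. We then bound
\begin{align*}
\E e^{-c_3t^2\min_\gd X_\gd}\le e^{-c_3t^2\mu/2}+\sum_{\gd\in F}\P(X_\gd<\mu/2).
\end{align*}
The main obstacle is the lower-tail bound $\P(X_\gd<\mu/2)\le Ce^{-c m_\kk}$, which suffices because $t^2\le\pi^2$ lets $e^{-cm_\kk}$ be absorbed into $Ce^{-c't^2m_\kk}$. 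The difficulty is that $X_\gd$ is not a linear statistic of the sample. I would try McDiarmid-type concentration for uniform random permutations, or for uniform $m$-subsets. Swapping one selected index with one unselected index changes $X_\gd$ by at most $2$, which gives deviation probability $\exp(-c\mu^2/m_\kk)=\exp(-c'm_\kk)$. If that is awkward, an alternative is to generate the subset as $m_\kk$ sequential draws and use a martingale (Azuma) bound on the number of split pairs. Combining the two estimates gives \eqref{l1}.
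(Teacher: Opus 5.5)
\paragraph{A misstated step.} Your route works, but one step is stated incorrectly. The claim that $\max_{\delta\in F}|\cos(t\delta/2)|<1$ for $0<|t|\le\pi$ is false in general. For example, with $F=\{2,3\}$ and $t=\pi$, the factor for $\delta=2$ equals $1$. What $\gcd F=1$ actually gives is that the factors cannot \emph{all} equal $1$, i.e.\ $\min_{\delta\in F}|\cos(t\delta/2)|<1$ for $0<|t|\le\pi$. Near $0$ you have $\min_\delta|\cos(t\delta/2)|\le 1-ct^2$, so by compactness $\min_\delta|\cos(t\delta/2)|\le e^{-c_3t^2}$ on $[-\pi,\pi]$. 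Then your next line $\prod_\delta|\cos(t\delta/2)|^{X_\delta}\le \exp(-c_3t^2\min_\delta X_\delta)$ is correct: bound the product by the single factor with the smallest cosine, raised to $X_{\delta^*}\ge\min_\delta X_\delta$. This is also why you really need every $X_\delta$ large, not just $\sum_\delta X_\delta$, since the minimizing $\delta$ depends on $t$.

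A smaller point: for $R$ to be measurable you must also condition on which unsplit pairs are fully drawn. Knowing $\cA$ and the split pairs does not determine this. Under that finer conditioning the within-pair choices are still independent and uniform, so the bound is unaffected. With these repairs your proof is correct.

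\paragraph{Comparison with the paper.} The paper proceeds differently. It uses the Erd\H{o}s--R\'enyi/H\'ajek Bernoulli representation $S_m\eqd(Y\mid X=m)$, where $(X,Y)$ is a sum of \emph{independent} vectors. It writes $\varphi_{S_m}(t)$ as a Fourier integral in an auxiliary variable $s$ of the bivariate characteristic function $\varphi_Z(s,t)$. From nonlatticity it proves the two-variable bound $\sum_{d\in\mathcal{D}_0}p_d(1-\cos(s+td))\ge c(s^2+t^2)$ on $[-\pi,\pi]^2$. Integrating out $s$ gives $Cm^{-1/2}e^{-cmt^2}$, and dividing by $\P(X=m)\ge cm^{-1/2}$ finishes. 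So the paper pays with a two-dimensional trigonometric estimate and a binomial point-probability bound, but everything factorizes over independent variables and no concentration inequality is needed.

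Your pair-swapping argument stays entirely within sampling without replacement and needs only a one-variable nonlattice estimate. Its mechanism is also transparent: each split pair contributes a factor $|\cos(t\delta/2)|$. The cost is an extra ingredient, namely a lower-tail bound $\P(X_\delta<\mu/2)\le Ce^{-cm}$ for the non-linear statistic $X_\delta$. This requires a McDiarmid/Azuma inequality for uniform random $m$-subsets (or permutations) under the swap-Lipschitz condition. That is standard, but it must be quoted precisely or proved, e.g.\ via the sequential-draw Doob martingale with increments bounded by a transposition coupling.
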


\begin{proof}
For convenience, we drop the subscripts $\kk$.
We use the conditioning method implicit in \cite{ER1959}
(see also \cite{Hajek} and \cite[Example 2.3]{Holst:conditional}).
Let $\xi_i$ ($1\le i\le n$) be \iid{} Bernoulli random variables with
\begin{align}
  \label{aa0}
\P(\xi_i=1)=\px:=m/n,
\end{align}
let $\eta_i:=d_i\xi_i$, 
and define the random vectors $\zeta_i:=(\xi_i,\eta_i)=(\xi_i,d_i \xi_i)$.
Let
\begin{align}\label{aa1}
  Z=(X,Y):=\sumin \zeta_i.
\end{align}
Note that $X=\sumin \xi_i$ has a binomial distribution $\Bi(n,\px)$.
Moreover, it follows from the construction that
\begin{align}\label{aa2}
  S_m\eqd (Y\mid X=m).
\end{align}
Let $\gf_Z(s,t):=\E e^{\ii s X+\ii t Y}$ be the characteristic function of
$Z$ (for $s,t\in\bbR$).
Then it follows from \eqref{aa2} that the characteristic function of $S_m$ is,
see \cite{ER1959} and \cite{Holst:conditional},
\begin{align}\label{aa3}
  \gfsm(t)
=\frac{\E [e^{\ii t Y}\indic{X=m}]}{\P(X=m)}
=\frac{1}{2\pi\P(X=m)}\intpipi e^{-\ii m s}\gf_Z(s,t)\dd s.
\end{align}

Since \eqref{aa1} is a sum of independent variables, we have
\begin{align}\label{aa4}
  \gf_Z(s,t)=\prodin \E e^{\ii (s \xi_i+ t \eta_i)}
=\prodin (1-\px+\px e^{\ii(s+td_i)})
=\prod_{d\in\bbZ} (1-\px+\px e^{\ii(s+ dt)})^{n_\bd(d)}.
\end{align}
For any $u \in \mathbb{R}$,
\begin{align}\label{aa5}
  \bigabs{1-\px+\px e^{\ii u}}^2&
=(1-\px)^2+\px^2+2\px(1-\px)\Re e^{\ii u}
=1-2\px(1-\px)(1-\cos u)
\notag\\&
\le \exp\bigpar{-2\px(1-\px)(1-\cos u)}
.\end{align}
Consequently, \eqref{aa4} yields
\begin{align}\label{aa6}
  |\gf_Z(s,t)|
\le  \exp\Bigpar{-\px(1-\px)\sum_{d \in \mathbb{Z}} n_\bd(d)\bigpar{1-\cos (s+td)}}.
\end{align}

By assumption, 
the limit distribution
$\bp$ is nonlattice, and thus the set $\set{i-j:i,j\in\supp(\bp)}$
is not contained in any proper subgroup of $\bbZ$; 
this means that the set generates  $\bbZ$, and thus there exist  finite
sequences $(i_k)_{k=1}^K$  and $(j_k)_{k=1}^K$ in $\supp(\bp)$ and integers
$\nu_k$ such that 
\begin{align}\label{aa7}
1=\sum_{k=1}^K \nu_k(i_k-j_k).  
\end{align}
Let $\Do:=\set{i_k,j_k:1\le k\le K}\subseteq\supp(\bp)$.
Since $\Do$ is finite, we have for all sufficiently large $\kk$, by
\eqref{ac8} and \refCond{Cond1Z},
\begin{align}\label{aa8}
\frac{  n_{\bd}(d)}{n}
=\frac{  n_{\bd}(d)}{|\bd|} 
\ge \frac12 p_d,
\qquad d\in \Do.
\end{align}
We consider in the sequel only such $\kk$, and note that this suffices,
since for any given $c$, \eqref{l1} trivially holds for any fixed $\kk$ if
we choose $C$ large enough. Then, \eqref{aa6} implies, 
since $1-\cos(s+td)\ge0$ for all $s,t \in \mathbb{R}$ and $d \in \bbZ$,
\begin{align}\label{aa9}
  |\gf_Z(s,t)|
\le  \exp\Bigpar{-\tfrac12\px(1-\px)n
\sum_{d\in \Do} p_d\bigpar{1-\cos (s+td)}}.
\end{align}

\ccreset\CCreset
It remains to study the function
\begin{align}\label{aa10}
g(s,t):= \sum_{d\in \Do} p_d\bigpar{1-\cos (s+td)}.
\end{align}
First, for some $\ccname\cca>0$, if $|s|,|t|<\cca$, then
$\cos(s+td)\le 1-\frac13(s+td)^2$ for $d\in\Do$ and thus
\begin{align}\label{aa12}
g(s,t)\ge h(s,t):=\tfrac13\sum_{d\in \Do} p_d (s+td)^2.
\end{align}
The \rhs{} $h(s,t)$ is a homogeneous polynomial in $s$ and $t$.
Assume $h(s,t)=0$. Then,
$s+td=0$ for every $d\in \Do$.
In particular, $s+ti_k=s+tj_k=0$ for every $i_k$ and $j_k$ in \eqref{aa7};
hence, $t(i_k-j_k)=0$, and \eqref{aa7} implies
$t=\sum_{k=1}^K \nu_k(i_k-j_k)t=0$. Hence also $s=0$.
Consequently, $h(s,t)=0\iff (s,t)=(0,0)$. By compactness, $h(s,t)$ has a
positive minimum value $\ccname\ccb>0$ on the unit circle $\set{(s,t) \in \mathbb{R}^{2}: s^2+t^2=1}$,
and thus by homogeneity
  \begin{align}\label{aa13}
    h(s,t)\ge \ccb(s^2+t^2),
\qquad s,t\in\bbR.
  \end{align}
Similarly, by \eqref{aa10}, if $g(s,t)=0$, then $s+td\equiv 0\pmod{2\pi}$ for
every $d\in \Do$. Arguing as above but modulo $2\pi$ (i.e., in the group
$\bbR/2\pi\bbZ$), we find that 
\begin{align}
 g(s,t)=0\iff s\equiv t\equiv 0\pmod{2\pi}.
\end{align}
In particular, $g(s,t)>0$ on $[-\pi,\pi]^2\setminus(-\cca,\cca)^2$,
and thus, by compactness, 
\begin{align}\label{aa11}
g(s,t)\ge \ccname\ccq,
\qquad 
(s,t)\in [-\pi,\pi]^2\setminus(-\cca,\cca)^2,  
\end{align}
for some $\ccq>0$.
Taking $\ccname\ccd:=\min\bigpar{\ccb,\ccq/(2\pi^2)}$, it follows 
by \eqref{aa12} and \eqref{aa13} for $|s|,|t|< \cca$, and 
\eqref{aa11} otherwise, that
\begin{align}\label{aa22}
  g(s,t)\ge \ccd(s^2+t^2),
\qquad |s|,|t|\le \pi.
\end{align}

We may for convenience assume $p=m/n\le\frac12$, since otherwise
we may replace $m$ by $n-m$ by interchanging the drawn set
of values $\set{d_{\tau(i)}:i\le m}$ and its complement.
We may also assume $m\ge1$, since \eqref{l1} is trivial for $m=0$.

Then \eqref{aa9}, \eqref{aa10}, and \eqref{aa22} yield,
recalling \eqref{aa0}, 
\begin{align}\label{aa99}
  |\gf_Z(s,t)|
\le  \exp\bigpar{- \ccname\cce\px n(s^2+t^2)}
=\exp\bigpar{- \cce m(s^2+t^2)},
\qquad |s|,|t|\le \pi,
\end{align}
for some constant $\cce >0$. Hence, integrating over $s$,
it follows from \eqref{aa3} that there exists $\CC >0$ such that
\begin{align}\label{aa93}
 \bigabs{ \gfsm(t)}
\le\frac{1}{\P(X=m)}\intpipi \bigabs{\gf_Z(s,t)}\dd s
\le\frac{\CCx}{\P(X=m)\sqrt{m}}
 \exp\bigpar{- \cce mt^2},
\qquad |t|\le\pi
.\end{align}
Furthermore, $X\simx\Bi(n,p)$ and $m=np$ with $1\le m\le n/2$, and thus,
as is well known, $\P(X=m)\ge \cc (np)\qqw=\ccx m\qqw$ for some $\ccx>0$
by a direct calculation using Stirling's formula.
Hence, 
the result \eqref{l1} follows from \eqref{aa93}.
\end{proof}

\begin{proof}[Proof of \refT{TLLT}]
  As said above, this uses a standard argument.

Let $\muk:=\E\Smk=m_\kk\bxd_\kk$ and
$Y_\kk:=(\Smk-\mu_\kk)/\hgsk$.
Then \refT{TA1} says $Y_\kk\dto \N(0,1)$, and thus the \chf{}
\begin{align}\label{ll1}
  \gf_{\Yk}(t)\to e^{-t^2/2},
\end{align}
for every fixed $t\in\bbR$.
(In fact, \refT{TA1} was proved in \cite{ER1959} by showing \eqref{ll1}.)
Furthermore,
\begin{align}\label{ll2}
  \gf_\Yk(t)=\gf_{\Smk}\parfrac{t}{\hgsk}e^{-\ii(\muk/\hgsk)t}.
\end{align}

By Fourier inversion and \eqref{ll2},
for every $k\in\bbZ$,
\begin{align}\label{ll3}
  \P(\Smk=k)=\frac{1}{2\pi}\intpipi e^{-\ii tk}\gf_{\Smk}(t)\dd t
=\frac{1}{2\pi \hgsk}\int_{-\hgsk\pi}^{\hgsk\pi} 
e^{-\ii tk/\hgsk}\gf_\Yk(t)e^{\ii(\muk/\hgsk)t}\dd t.
\end{align}
Hence, subtracting a well known integral,
\begin{align}\label{ll4}
2\pi\hgsk\P(\Smk=k)-\sqrt{2\pi}e^{-(k-\muk)^2/(2\hgssk)}
=
\intoooo
e^{-\ii t(k-\muk)/\hgsk}\Bigpar{\gf_\Yk(t)\indic{|t|\le\hgsk\pi}-e^{-t^2/2}}\dd t.
\end{align}
Dividing by $\sqrt{2\pi}$ and taking absolute values yield
\begin{align}\label{ll5}
\bigabs{\sqrt{2\pi\hgssk}\P(\Smk=k)-e^{-(k-\muk)^2/(2\hgssk)}}
\le
\frac{1}{\sqrt{2\pi}}
\intoooo
\Bigabs{\gf_\Yk(t)\indic{|t|\le\hgsk\pi}-e^{-t^2/2}}\dd t.
\end{align}
The \rhs{} is independent of $k$, so we may take the supremum over all $k$.
Hence, it remains only to show that the integral in \eqref{ll5} tends to 0
as \kktoo.
To see this, note first that
since $\Smk$ is integer valued, it follows from \eqref{ta1}
that necessarily $\hgsk\to\infty$. (This may also be shown directly from
\eqref{Lindeberg} and the assumption that all $d_{i\kk}$ are integers.)
Consequently, \eqref{ll1} implies that
the integrand in \eqref{ll5} tends to 0 as \kktoo, for every
fixed $t$. 
Furthermore, \eqref{ll2} and \refL{L1} show that, for every $t\in\bbR$,
\begin{align}\label{ll6}
\bigabs{\gf_\Yk(t)\indic{|t|\le\hgsk\pi}}
&=
\Bigabs{\gf_{\Smk}\parfrac{t}{\hgsk}}\indic{|t/\hgsk|\le\pi}
\le 
Ce^{-c m_\kk (1-m_\kk/n_\kk)t^2/\hgssk}
=
Ce^{-c n_\kk t^2/Q_\kk}.
\end{align}
Hence,
\refCond{Cond1Z}\ref{Cond1Zc}
implies that the integrand in \eqref{ll5} is uniformly bounded by
$Ce^{-c t^2}$, and thus dominated convergence applies.
Consequently, the integral in \eqref{ll5} tends to 0 as \kktoo, 
which completes the proof.
\end{proof}

\begin{remark}
  We used for simplicity compactness to obtain \eqref{aa11} and thus
the existence of $c>0$ in
  \eqref{l1}. Alternatively, one might obtain an explicit constant $c$
  (depending on $\bp$ and for all sufficiently large $\bn$)
by estimating $g(s,t)$ (or the corresponding sum over all $d$)
using \cite{Benedicks1975}.
\end{remark}

\begin{remark}\label{R1Z}
  Note that \refCond{Cond1Z}\ref{Cond1Zc} is not needed for the proof of
  \refL{L1}.
However, this condition (or something similar) is needed for \refT{TLLT},
as is shown by the following counterexample.
 Note that \refT{TA1} applies in this example.

Let $\bd_\kk$ consist of $n_\kk:=3\kk+2\floor{\kk\qq}$ elements:
each of $0, 1, -1$ appears $\kk$ times,
and each of $\pm2\floor{\kk^{3/4}}$ appears $\floor{\kk\qq}$ times.
Let (for simplicity) $m_\kk:=\floor{n_\kk/2}$.
It is easily seen that $Q_\kk\sim 8\kk^2$ and $\hgssk\sim 2\kk^2$, and that
\eqref{Lindeberg} holds (trivially). 
Also \refCond{Cond1Z}\ref{Cond1Za}--\ref{Cond1Zb} hold, with
$\supp{\bp}=\set{-1,0,1}$ and thus $\bp$ is nonlattice.
However, \refCond{Cond1Z}\ref{Cond1Zc} does not hold.

Let $Z_\kk:=\sum_{i=1}^{m_\kk}d_{\tau(i)}\indic{|d_{\tau(i)}|=1}$
be the sum of the drawn values that are $\pm1$. Then
$\Smk\equiv Z_\kk \pmod{2\floor{\kk^{3/4}}}$. 
Hence,
if $\Smk=\floor{\kk^{3/4}}$, then 
$|Z_\kk|\ge\floor{\kk^{3/4}}$. 
It follows, by conditioning on the number of drawn values
that are $\pm1$ 
and using Chernoff bounds for the hypergeometric distribution
(see e.g.\ \cite[Theorem 2.10]{JLR})
that $\P\bigpar{\Smk=\floor{\kk^{3/4}}}\le e^{-c\kk\qq}$, which shows
that \eqref{tllt} does not hold for $k=\floor{\kk^{3/4}}$.
\end{remark}

\newcommand\AAP{\emph{Adv. Appl. Probab.} }
\newcommand\JAP{\emph{J. Appl. Probab.} }
\newcommand\JAMS{\emph{J. \AMS} }
\newcommand\MAMS{\emph{Memoirs \AMS} }
\newcommand\PAMS{\emph{Proc. \AMS} }
\newcommand\TAMS{\emph{Trans. \AMS} }
\newcommand\AnnMS{\emph{Ann. Math. Statist.} }
\newcommand\AnnPr{\emph{Ann. Probab.} }
\newcommand\CPC{\emph{Combin. Probab. Comput.} }
\newcommand\JMAA{\emph{J. Math. Anal. Appl.} }
\newcommand\RSA{\emph{Random Structures Algorithms} }
\newcommand\DMTCS{\jour{Discr. Math. Theor. Comput. Sci.} }

\newcommand\AMS{Amer. Math. Soc.}
\newcommand\Springer{Springer-Verlag}
\newcommand\Wiley{Wiley}

\newcommand\vol{\textbf}
\newcommand\jour{\emph}
\newcommand\book{\emph}
\newcommand\inbook{\emph}
\def\no#1#2,{\unskip#2, no. #1,} 
\newcommand\toappear{\unskip, to appear}

\newcommand\arxiv[1]{\texttt{arXiv}:#1}
\newcommand\arXiv{\arxiv}

\newcommand\xand{and }
\renewcommand\xand{\& }

\def\nobibitem#1\par{}

\end{document}